\numberwithin{equation}{section}
\def\D{{\mathbb D}}  \def\T{{\mathbb T}}
\def\C{{\mathbb C}}  \def\N{{\mathbb N}}
\def\Z{{\mathbb Z}}
\def\R{\mathbb R}
\def\({\left(}       \def\){\right)}
\newtheorem{theorem}{Theorem}[section]
\newtheorem{lemma}[theorem]{Lemma}
\newtheorem{proposition}[theorem]{Proposition}
\newtheorem{corollary}[theorem]{Corollary}
\theoremstyle{definition}
\newtheorem{definition}[theorem]{Definition}
\newtheorem{example}[theorem]{Example}
\theoremstyle{remark}
\newtheorem{remark}[theorem]{Remark}
\numberwithin{equation}{section}
\DeclareMathOperator*{\esssup}{ess\,sup}
\begin{document}
\title[Average radial integrability]{Average radial integrability spaces of analytic functions}
\author[T. Aguilar-Hern\'andez ]{Tanaus\'u Aguilar-Hern\'andez}
\address{Departamento de Matem\'atica Aplicada II and IMUS, Escuela T\'ecnica Superior de Ingenier\'ia, Universidad de Sevilla,
Camino de los Descubrimientos, s/n 41092, Sevilla, Spain}
\email{taguilar@us.es}

\author[M.D. Contreras]{Manuel D. Contreras}
\email{contreras@us.es}

\author[L. Rodr\'iguez-Piazza]{Luis Rodr\'iguez-Piazza}
\address{Departmento de An\'alisis Matem\'atico and IMUS, Facultad de Matem\'aticas, Universidad
de Sevilla, Calle Tarfia, 41012 Sevilla, Spain}
\email{piazza@us.es}

\subjclass[2010]{Primary 30H20, 47B33, 47D06; Secondary 46E15, 47G10}

\date{\today}

\keywords{Mixed norm spaces, radial integrability, Bergman projection}

\thanks{This research was supported in part by Ministerio de Econom\'{\i}a y Competitividad, Spain,  and the European Union (FEDER), project PGC2018-094215-B-100,  and Junta de Andaluc{\'i}a, FQM133 and FQM-104.}

\maketitle

\begin{abstract}
In this paper we introduce the family of spaces $RM(p,q)$, $1\leq p,q\leq +\infty$. They are spaces of holomorphic functions in the unit disc with average radial integrability. This family  contains the classical Hardy spaces (when $p=\infty$) and Bergman spaces (when $p=q$). We characterize the inclusion between $RM(p_1,q_1)$ and $RM(p_2,q_2)$ depending on the parameters. For $1<p,q<\infty$, our main result provides a characterization of the dual spaces of $RM(p,q)$ by means of the boundedness of the Bergman projection. We show that $RM(p,q)$ is separable if and only if $q<+\infty$. In fact, we provide a method to build isomorphic copies of $\ell^\infty$ in $RM(p,\infty)$.
\end{abstract}

\tableofcontents

\section{Introduction}

In 1923, the classical Hardy spaces $H^p$ were introduced by  F. Riesz \cite{riesz_1923}.   He named those spaces after the article of G.H. Hardy \cite{Hardy_1915}. Subsequently, the Bergman spaces $A^p$ appeared in a work of S. Bergman \cite{bergman_1970} in 1970 focused on the spaces of analytic functions that are square-integrable over a given domain with respect to the Lebesgue area measure.
Since then, great progress has been made in the study of these and other spaces of analytic functions in the unit disc. In most of the cases, the belonging to the space is given in terms of boundedness (or integrability) of a certain average of the function on circles centered at the origin or in terms of the integrability with respect to the Lebesgue area, maybe with a certain weight. There are many good books about these spaces, but we  stand out \cite{duren_theory_2000,garnett_bounded_2007,HKZ,vukotic_multiplier_2016}.

In other less studied cases, the belonging is determined by the average radial integrability. Maybe the most well-known space in this situation is the space of bounded radial variation BRV, a topic that goes back to Zygmund and where many different authors have worked (see, i.e., the papers of Bourgain \cite{Bourgain}, Rudin \cite{rudin_1955}, and Zygmund \cite{zygmund_1944}). The space BRV of analytic functions
with bounded radial variation consists of those holomorphic functions $g\in \mathcal{H}(\D)$ such that
    $$
    \sup_{\theta}\int_0^1|g'(te^{i\theta})|\,dt<\infty.
    $$
    Other different situation where the radial integrability plays an important role is in the Riesz-F\'ejer Theorem which says that there is a constant $C_p>0$ such that if $f$ belongs to the Hardy space $H^{p}$ then
\begin{equation}\label{Riesz-Fejer}
        \sup_{\theta}\left(\int_0^1|f(re^{i\theta})|^{p}\,dr\right)^{1/p}\leq C_p || f ||_{H^{p}}.
\end{equation}
	The left side of \eqref{Riesz-Fejer}, considered as a function in the variables $\theta$ and $r$, is the norm of $f$ in the space $L^\infty(\T,L^p[0,1])$. This paper is devoted to introduce and study the family of spaces $RM(p,q)$ of analytic functions on the disk $\D$ such that $f\in L^{q}(\T,L^p[0,1])$ (Definition~\ref{definition}). This family of spaces contains the Bergman spaces (when $p=q$) and Hardy spaces (when $p=\infty$).

	As far as we know, there is no  systematic study of spaces  of average radial integrability. A second part of this research will appear in \cite{Aguilar-Contreras-Piazza} where Littlewood-Paley type inequalities and integration operators are analyzed in the setting of these spaces. 
	   
   In Section 2, we introduce the family of spaces $RM(p,q)$ and show a range of examples. Among them, we point out Proposition \ref{lacunary} where we characterize lacunary series belonging to $RM(p,q)$. We analyze other properties such as boundedness of evaluation functionals and separability. We show that $RM(p,q)$ is separable if and only if $q<+\infty$ (see Proposition \ref{desnsity-polynomials} and Theorem \ref{non-separability}).  In fact,  $RM(p,\infty)$ always contains a subspace isomorphic to $\ell^{\infty}$ (Theorem~\ref{non-separability}).
   
   The main results of the paper appear in Section 3 and 4. In Section 3 we provide a complete characterization of when one of such spaces is included in another one (Theorem  \ref{thm:inclusion}) and, in such a case, we characterize when the inclusion mapping is compact (Theorem \ref{thm:compactness}). As a byproduct of such characterization,  we see that the converse of \eqref{Riesz-Fejer} does not hold, that is, there are holomorphic functions $f$ in $\D$ such that  $\sup_{\theta}\left(\int_0^1|f(re^{i\theta})|^{p}\,dr\right)^{1/p}<+\infty$ but $f\notin H^{p}$.   

	In the last section of this article we show the boundedness of the Bergman projection from $L^q(\mathbb{T},L^p[0,1])$ onto our spaces $RM(p,q)$ when $1<p,q<\infty$. This allows us to identify the dual space of $RM(p,q)$ for $1<p,q<+\infty$ (Corollary~\ref{dualopenbox}). The proof of the boundedness of the Bergman projection depend on techniques and tools coming form Harmonic Analysis. In particular, we use a classical result of C. Fefferman and E. Stein.
	The case $p=q$ gives the well-known  boundedness of the Bergman projection from $L^p(\D)$ onto the Bergman space $A^p$, which is usually proved with different techniques not working in our situation. 

%
%
%

Throughout the paper the letter $C=C(\cdot)$ will denote an absolute constant whose value depends on the parameters indicated in the parenthesis, and may change from one occurrence to another. We will use the notation $a\lesssim b$ if there exists a constant $C=C(\cdot)>0$ such that $a\leq C b$, and $a\gtrsim b$ is understood in an analogous manner. In particular, if $a\lesssim b$ and $a\gtrsim b$, then we will write $a\asymp b$.

\section{Definition and first properties}

We start this section introducing the spaces which are the goals of our study and providing some different kind of functions that belongs to them. In addition, we deal with some properties of such spaces, as for instance the separability. 

\begin{definition} \label{definition}
	Let $0< p,q \leq +\infty$. We define the spaces of analytic functions
	$$
		RM(p,q)=\{f\in\mathcal{H}(\D)\ :\rho_{p,q}(f)<+\infty\}
	$$
	where
	\begin{equation*}
	\begin{split}
	\rho_ {p,q}(f)&=\left(\frac{1}{2\pi}\int_{0}^{2\pi} \left(\int_{0}^{1} |f(r e^{i t})|^p \ dr \right)^{q/p}dt\right)^{1/q}, \quad \text{ if } p,q<+\infty,\\
	\rho_ {p,\infty}(f)&=\esssup_ {t\in[0,2\pi)}\left(\int_{0}^{1} |f(r e^{i t})|^p \ dr \right)^{1/p}, \quad \text{ if } p<+\infty, \\
	\rho_ {\infty,q}(f)&=\left(\frac{1}{2\pi}\int_{0}^{2\pi} \left(\sup_{r\in [0,1)} |f(r e^{i t})| \right)^{q}dt\right)^{1/q},\quad\text{ if } q<+\infty,\\
	\rho_{\infty,\infty}(f)&=\|f\|_{H^{\infty}}.
	\end{split}
	\end{equation*}	
\end{definition}

\begin{remark}\label{remarkesssupremum}
	In the definition of $\rho_{p,\infty}$ the essential supremum can be replaced by the supremum. Fix $\theta\in [0,2\pi]$. As the set of $t\in [0,2\pi]$ such that $\left(\int_{0}^{1} |f(re^{it})|^{p}\ dr\right)^{1/p}\leq \rho_{p,\infty}(f)$ is dense in $[0,2\pi]$, we can extract a sequence $\{t_n\}$ in this set such that $t_n\rightarrow \theta$. Using Fatou's lemma it follows that $\left(\int_{0}^{1} |f(re^{i\theta})|^{p}\ dr\right)^{1/p}\leq \rho_{p,\infty}(f)$.
\end{remark}

One can easily check that if $1\leq p,q \leq +\infty$, then $RM(p,q)$ is a Banach space when we endow it with the norm $\rho_{p,q}$. In fact in this paper, we will be interested only in these cases. So, we will stand most of our results for $1\leq p,q\leq +\infty$. Nevertheless, sometimes in the proofs of such results considering other values of $p$ and $q$ will help us.


For certain parameters $p,q$ these spaces $RM(p,q)$ are well known spaces. Namely, it is clear that $RM(p,p)$ is nothing but the Bergman space $A^p$, for $1\leq p<\infty$. In addition, let us fix $1\leq q\leq +\infty$. One can check that $RM(\infty, q) $ is contained in the Hardy space $H^q$. On the other hand, by \cite[Theorem 17.11(a), p. 340]{rudin_real_1987}, there is a constant $C=C(q)$ such that if $f\in H^q$, then
\begin{equation*}
\begin{split}
\int_0^{2\pi} \sup_{r\in [0,1]} |f(re^{i\theta})|^q \, d\theta&\leq \int_0^{2\pi}  \sup\{ |f(z)|^q: \, |e^{i\theta}-z|<3(1-|z|)\} \, d\theta\leq C\Vert f\Vert ^q_{H^q},
\end{split}
\end{equation*}
so that we get that $RM(\infty, q) =H^q$ for all $q\in (0,+\infty]$. 
Another interesting space that fits in this family is the space of bounded radial variation $BRV$ (see, i.e. \cite{Bourgain}), that is the space of analytic functions such that $f'\in RM(1,\infty)$.

\subsection{First examples}

\begin{example}\label{ex1}
For $\alpha\in\R$ and $0<p,q\leq +\infty$, the function $f_{\alpha}(z)=(1-z)^{-\alpha}$ belongs to $RM(p,q)$ if and only if $\alpha<\frac{1}{p}+\frac{1}{q}$. 
\end{example}
\begin{proof} If $\alpha\leq 0$, then the function $|f_{\alpha}|$ is bounded so that it belongs to $RM(p,q)$ for all $p$ and $q$. Thus, in what follows we will only consider the case $\alpha>0$. 

Assume now that $0< p,q<\infty$. Write $I(t)=\int_0^1|f_\alpha (re^{it})|^p\, dr$. Since $I$ is even and decreasing in $[0,\pi]$, we have  
$$
\int_{0}^{\pi/4}I(t)^{q/p}\, dt\leq \rho_{p,q}^q(f_\alpha)=2\int_{0}^{\pi}I(t)^{q/p}\, dt\leq 8\int_{0}^{\pi/4}I(t)^{q/p}\, dt.
$$ 
In addition, for $t\in [0,\pi/4]$, we have that $1-\cos(t)\asymp t^2/2$. Therefore,
$$
\rho_{p,q}^q(f_\alpha)\asymp\int_{0}^{\pi/4}\left[\int_0^1 \frac{1}{((1-r)^2+rt^2)^{\alpha p/2}}\, dr \right]^{q/p}\, dt.
$$ 
With a similar argument, we can reduce the integral in $r$ to the interval $[1/2,1]$ and using that when $r$  runs this interval, the function $rt^2$ is equivalent to $t^2$ we have
\begin{equation}\label{Eq:ex1}
\rho_{p,q}^q(f_\alpha)\asymp\int_{0}^{\pi/4}\left[\int_{1/2}^1 \frac{1}{((1-r)^2+t^2)^{\alpha p/2}}\, dr \right]^{q/p}\, dt.
\end{equation}
If $\alpha\geq \frac{1}{p}+\frac{1}{q}$, and $t\in [0,1/2]$, then
$$
\int_{1/2}^1 \frac{1}{((1-r)^2+t^2)^{\alpha p/2}}\, dr\geq \int_{1-t}^1 \frac{1}{((1-r)^2+t^2)^{\alpha p/2}}\, dr\geq \int_{1-t}^1 \frac{1}{(2t^2)^{\alpha p/2}}\, dr=\frac{1}{2^{\alpha p/2}}\frac{1}{t^{\alpha p-1}}.
$$
Thus
$$
\int_{0}^{\pi/4}\left[\int_{1/2}^1 \frac{1}{((1-r)^2+t^2)^{\alpha p/2}}\, dr \right]^{q/p}\, dt\geq \frac{1}{2^{\alpha q/2}}
\int_{0}^{1/2}\left[ \frac{1}{t^{\alpha p-1}}\right]^{q/p}\, dt=+\infty.
$$ 
and so, by \eqref{Eq:ex1}, $f_\alpha$ does not belong to $RM(p,q)$. 

If $\alpha p<1$, then 
$$
\int_{1/2}^1 \frac{1}{((1-r)^2+t^2)^{\alpha p/2}}\, dr \leq \int_{1/2}^1 \frac{1}{(1-r)^{\alpha p}}\, dr<+\infty,
$$
so that, by \eqref{Eq:ex1}, $f_\alpha\in RM(p,q)$. If $\alpha p=1$, then we obtain
	\begin{align*}
	\int_{1/2}^{1} \frac{1}{((1-r)^2+t^2)^\frac{\alpha p}{2}}\ dr\leq \int_{1-t}^{1} \frac{1}{t}\ dr+\int_{1/2}^{1-t} \frac{1}{1-r}\ dr\leq \ln\left(\frac{e}{2t}\right).
	\end{align*}
	Integrating with respect to $t$ it follows 
	\begin{align*}
	\int_{0}^{\pi/4} \left(\int_{1/2}^{1} \frac{1}{((1-r)^2+t^2)^\frac{\alpha p}{2}}\ dr\right)^{q/p}\ dt\leq \int_{0}^{\pi/4}\ln^{q/p}\left(\frac{e}{2t}\right) dt<+\infty.
	\end{align*}

It remains to see what happens if $1< \alpha p<1+\frac{p}{q}.$ In this case, if $t\in [0,\pi/4]$, we have
$$
\int_{1/2}^1 \frac{1}{((1-r)^2+t^2)^{\alpha p/2}}\, dr\leq \int_{1/2}^{1-t} \frac{1}{(1-r)^{\alpha p}}\, dr +\int_{1-t}^1 \frac{1}{t^{\alpha p}}\, dr \leq \frac{\alpha p}{\alpha p-1} \frac{1}{t^{\alpha p-1}}.
$$
 Therefore, by  \eqref{Eq:ex1},
$$
\rho_{p,q}^q(f_\alpha)\lesssim\left(\frac{\alpha p}{\alpha p-1}\right)^{q/p}\int_0^{\pi/4} \frac{1}{t^{\alpha q-\frac{q}{p}}}\, dt<+\infty.
$$

Summing up, the result holds if both $p$ and $q$ are finite. For $p=\infty$, since $RM(\infty,q)=H^q$, the result is well-known (see, i.e., \cite[Page 13]{duren_theory_2000}).

For $q=\infty$, arguing as above we have 
\begin{equation}\label{Eq:ex2}
\rho_{p,\infty}^p(f_\alpha)\asymp\sup_{0\leq t\leq \pi/2}\int_{1/2}^1 \frac{1}{((1-r)^2+t^2)^{\alpha p/2}}\, dr =\int_{1/2}^{1} \frac{dr}{(1-r)^{\alpha p}}<+\infty
\end{equation}
if and only if $\alpha<\frac{1}{p}$.
\end{proof}

\begin{example}\label{ex3} Let $1\leq p,q<\infty$, $n\geq 1$ and take $\alpha$ such that $\frac{1}{p}+\frac{1}{q}=\frac{1}{\alpha}$. The $RM(p,q)$-norm of the holomorphic function $$f_{n,\alpha}(z)=\left(\sum_{k=0}^{n}z^k\right)^{1/\alpha}=\left(\frac{1-z^{n+1}}{1-z}\right)^{1/\alpha},$$ where we are using the main branch of the logarithm to define $w^{1/\alpha}$, can be estimated as
	\begin{align}\label{estimatesequencefunctions}
	\rho_{p,q}(f_{n,\alpha})\lesssim \left(\frac{p}{p-\alpha}\right)^{1/p}\ln^{1/q}(n+1).
	\end{align}
\end{example}
\begin{proof}
Clearly, $f_{n,\alpha}$ is well-defined. It is not difficult to see that the proof of \eqref{estimatesequencefunctions} can be reduced to the case $\alpha=1$ and $\frac{1}{p}+\frac{1}{q}=1$. Notice that, in this case,  $p>1$. 

Since 
$$
\int_{\pi/4}^{\pi}\left(\int_{0}^{1}|f_{n,1}(re^{i\theta})|^{p}\, dr\right)^{q/p}\, d\theta \leq \int_{\pi/4}^{\pi}\left(\int_{0}^{1}|4|^{p}\, dr\right)^{q/p}\, d\theta =\frac{3\pi}{4}4^{q},
$$
we have 
\begin{align*}
2\pi \rho_{p,q}(f_{n,1})^{q}\leq 2 \int_{0}^{\pi/4}  \left(\int_{0}^{1} |f_{n,1}(re^{i\theta})|^p\ dr\right)^{q/p}\ d\theta +\frac{3\pi}{4}4^{q}.
\end{align*}

If $1-\theta\leq r\leq 1$ and $\theta\in[0,\pi/4]$, arguing as in Example \ref{ex1}, we obtain 
$$|f_{n,1}(re^{i\theta})|\lesssim  \frac{2}{\sqrt{(1-r)^2+r \theta^2}}\leq \frac{2}{\theta\sqrt{1-\theta}}\leq \frac{2}{\theta\sqrt{1-\frac{\pi}{4}}}<\frac{5}{\theta}.$$ 
Therefore, there is a constant $C>0$ such that
\begin{equation*}
\begin{split}
	&\int_{\frac{1}{n+1}}^{\pi/4}\left(\int_{0}^{1} |f_{n,1}(re^{i\theta})|^p\ dr\right)^{q/p}\ d\theta=\\
	&\qquad =\int_{\frac{1}{n+1}}^{\pi/4}\left(\int_{0}^{1-\theta}|f_{n,1}(re^{i\theta})|^p\ dr+ \int_{1-\theta}^{1}|f_{n,1}(re^{i\theta})|^p\ dr\right)^{q/p}\ d\theta\\
	&\qquad\leq \int_{\frac{1}{n+1}}^{\pi/4}\left(\int_{0}^{1-\theta}\frac{2^p}{(1-r)^p}\ dr+ \int_{1-\theta}^{1}\frac{C^p}{\theta^p}\ dr\right)^{q/p}\ d\theta \\
	&\qquad \leq C^{q}\int_{\frac{1}{n+1}}^{\pi/4}\left( \frac{p}{p-1}\theta^{-p+1}\right)^{q/p}\ d\theta=C^{q}\left(\frac{p}{p-1}\right)^{q/p}(\ln (\pi/4)+\ln (n+1))
\end{split}
\end{equation*}
and
\begin{equation*}
\begin{split}
	&\int_{0}^{\frac{1}{n+1}}\left(\int_{0}^{1} |f_{n,1}(re^{i\theta})|^p\ dr\right)^{q/p}\ d\theta=\\
	&\qquad =\int_{0}^{\frac{1}{n+1}}\left(\int_{0}^{1-\frac{1}{n+1}}|f_{n,1}(re^{i\theta})|^p\ dr+\int_{1-\frac{1}{n+1}}^{1}|f_{n,1}(re^{i\theta})|^p\ dr\right)^{q/p}\ d\theta\\
	&\qquad \leq 2^{q}\int_{0}^{\frac{1}{n+1}}\left(\int_{0}^{1-\frac{1}{n+1}}\frac{1}{(1-r)^p}\ dr+\int_{1-\frac{1}{n+1}}^{1}(n+1)^p \ dr\right)^{q/p}\ d\theta \leq 2^{q}\left(\frac{p}{p-1}\right)^{q/p}.
\end{split}
\end{equation*}
Putting altogether, we get the estimation of $\rho_{p,q}(f_{n,\alpha})$.
\end{proof}

Next example provides the lacunary series that belong to $RM(p,q)$. For $p=\infty$, that is for Hardy spaces, the characterization is different and it can be seen in \cite[Theorem 6.2.2]{vukotic_multiplier_2016} for $q<+\infty$ and in \cite[Vol. I, p. 247]{zygmund_1959} for $q=\infty$. We will say that a sequence of positive numbers $\{x_k\}$ is a lacunary sequence if there is a constant $\lambda$ such that $\frac{x_{k+1}}{x_k}\geq \lambda>1$.

\begin{proposition}\label{lacunary}
Let $\{n_k\}_{k=0}^{\infty}$ be a lacunary sequence of positive integer numbers, $1\leq p<\infty$ and $1\leq q \leq \infty$. Then 
\begin{align*}
f(z)=\sum_{k=0}^{\infty} \alpha_{k} z^{n_k}
\end{align*}
belongs to $RM(p,q)$ if and only if
\begin{align*}
\sum_{k=0}^{\infty} \frac{|\alpha_k|^p}{n_k}<+\infty. 
\end{align*}
Moreover, it is satisfied that 
\begin{align}\label{equation-lacunary}
\rho_{p,q}(f)\asymp \left(\sum_{k=0}^{\infty} \frac{|a_k|^p}{n_k}\right)^{1/p}.
\end{align}
\end{proposition}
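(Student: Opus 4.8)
The plan is to establish the two–sided estimate \eqref{equation-lacunary}; the membership characterization then follows, since $\sum_k|\alpha_k|^p/n_k<\infty$ already forces $f\in\mathcal{H}(\D)$ (that condition makes the coefficients grow at most polynomially in $n_k$, so the radius of convergence is at least $1$). The crucial preliminary remark is that one monomial satisfies $\int_0^1 r^{n_k p}\,dr\asymp 1/n_k$, which is where the weights $1/n_k$ come from. The whole difficulty is that in $\rho_{p,q}$ the radial integral sits \emph{inside} the angular one, so the classical Khinchin--Zygmund equivalence for lacunary series, which controls angular means, cannot be applied directly. I would split into an upper and a lower estimate and, in both, reduce to an extreme value of $q$ by monotonicity: writing $h(t)=\big(\int_0^1|f(re^{it})|^p\,dr\big)^{1/p}$, one has $\rho_{p,q}(f)=\|h\|_{L^q(\T,\,dt/2\pi)}$, nondecreasing in $q$ on the probability space $\T$. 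Hence it suffices to prove $\rho_{p,\infty}(f)\lesssim(\sum_k|\alpha_k|^p/n_k)^{1/p}$ and $\rho_{p,1}(f)\gtrsim(\sum_k|\alpha_k|^p/n_k)^{1/p}$.

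For the upper bound I would discard the angular variable by the triangle inequality: $|f(re^{it})|\le g(r):=\sum_k|\alpha_k|r^{n_k}$, a $t$-independent majorant, whence $h(t)^p\le\int_0^1 g(r)^p\,dr$ for every $t$ and so $\rho_{p,\infty}(f)^p\le\int_0^1 g(r)^p\,dr$. It then remains to prove the purely one-dimensional estimate $\int_0^1 g(r)^p\,dr\lesssim\sum_k|\alpha_k|^p/n_k$. After the substitution $r=e^{-s}$ this reads $\int_0^\infty(\sum_k|\alpha_k|e^{-n_k s})^p e^{-s}\,ds\lesssim\sum_k|\alpha_k|^p/n_k$; partitioning $(0,\infty)$ into the intervals where $s\asymp 1/n_k$ and using lacunarity (namely $n_j/n_k\le\lambda^{-(k-j)}$ for $j\le k$ and the super-exponential decay $e^{-n_j s}\le e^{-\lambda^{j-k-1}}$ for $j>k$) reduces it to the weighted Hardy inequality $\sum_k n_k^{-1}(\sum_{j\le k}|\alpha_j|)^p\lesssim\sum_k|\alpha_k|^p/n_k$. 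This last inequality I would prove, after the normalization $\tilde b_j=|\alpha_j|n_j^{-1/p}$, by recognizing it as a convolution with the kernel $K_m=\lambda^{-m/p}\mathbf 1_{m\ge0}$ and invoking Young's inequality; the gap $\lambda>1$ is exactly what makes $K\in\ell^1$.

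For the lower bound I would instead bring the angular average \emph{inside} by Minkowski's integral inequality (legitimate since $p\ge1$): $\rho_{p,1}(f)=\int_\T\|f(\cdot e^{it})\|_{L^p_r}\,\tfrac{dt}{2\pi}\ge\big(\int_0^1 M_1(r,f)^p\,dr\big)^{1/p}$, where $M_1(r,f)=\int_\T|f(re^{it})|\,\tfrac{dt}{2\pi}$. Now the angular variable is innermost and the lacunary Khinchin--Zygmund theorem applies pointwise in $r$: $M_1(r,f)\gtrsim(\sum_k|\alpha_k|^2 r^{2n_k})^{1/2}$, with constant depending only on $\lambda$. It then remains to show $\int_0^1(\sum_k|\alpha_k|^2 r^{2n_k})^{p/2}\,dr\gtrsim\sum_k|\alpha_k|^p/n_k$, which I would obtain by localization rather than by comparing $p$ with $2$: on the disjoint intervals $I_k=[1-1/n_k,\,1-1/n_{k+1}]$ (of length $\asymp 1/n_k$ by lacunarity) one keeps only the $k$-th term, and since $r^{2n_k}\ge(1-1/n_k)^{2n_k}\gtrsim1$ there, $\int_{I_k}(\cdots)^{p/2}\,dr\gtrsim|\alpha_k|^p/n_k$; summing over the disjoint $I_k$ gives the claim, the finitely many indices with $n_k=1$ (where the left endpoint degenerates) being absorbed by a trivial direct estimate.

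The main obstacle is precisely this reversed order of integration: the two genuinely different devices—the $t$-free majorant $g(r)$ for the upper estimate and Minkowski's integral inequality followed by Khinchin for the lower one—are what decouple the radial and angular behaviour, after which everything collapses to one-dimensional lacunary estimates (a Young/Hardy convolution bound and a disjoint-interval localization).
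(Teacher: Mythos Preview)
Your argument is essentially correct but takes a considerably longer route than the paper's. The paper's proof rests on a single citation: by a theorem of Gurari\u{\i} and Macaev on lacunary power sequences in $L^p[0,1]$, the normalized monomials $(n_k+1/p)^{1/p}t^{n_k}$ are equivalent to the unit vector basis of $\ell^p$. Applying this with coefficients $\beta_k=\alpha_k e^{in_k\theta}(n_k+1/p)^{-1/p}$ gives, for \emph{every fixed} $\theta$,
\[
\Big(\int_0^1 |f(re^{i\theta})|^p\,dr\Big)^{1/p}\ \asymp\ \Big(\sum_k \frac{|\alpha_k|^p}{n_k}\Big)^{1/p},
\]
with constants independent of $\theta$. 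Thus the radial $L^p$-norm is essentially constant on $\mathbb T$, and $\rho_{p,q}(f)$ equals this constant for every $q$ at once. The ``reversed order of integration'' that you identify as the main obstacle simply never appears. Your upper bound via the $t$-free majorant $g(r)=\sum_k|\alpha_k|r^{n_k}$ is in effect a hands-on proof of the Gurari\u{\i}--Macaev upper estimate, and your lower bound (Minkowski plus Zygmund's lacunary equivalence plus localization) recovers a weaker consequence of their lower estimate; both are valid but redundant once the theorem is quoted.

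One technical caution on your upper bound: on the interval where $s\in[1/n_{k+1},1/n_k]$, using the pointwise tail bound $e^{-n_js}\le e^{-\lambda^{j-k-1}}$ and then multiplying by the interval length $\asymp 1/n_k$ does \emph{not} control the tail when the ratios $n_{k+1}/n_k$ are unbounded (after passing to $b_j=|\alpha_j|n_j^{-1/p}$ a spurious factor $n_{k+1}/n_k$ survives). The fix is to integrate rather than take the supremum: $\|e^{-n_js}\|_{L^p(I_k)}\le (pn_j)^{-1/p}e^{-n_j/n_{k+1}}$, which produces the needed factor $n_j^{-1/p}$ and turns the tail into a second $\ell^1$-convolution in the $b_j$, handled by Young exactly as you do for the head. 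With that patch your proof is complete; citing Gurari\u{\i}--Macaev is shorter and yields the stronger pointwise-in-$\theta$ statement.
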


\begin{remark}
	Notice that the second part of the expression \eqref{equation-lacunary} does not depend on $q$.
\end{remark}

\begin{proof} Notice that $\left\{n_k+\frac{1}{p}\right\}_{k\geq 0}$ is also a lacunary sequence. The proof of this result is based on a characterization of bases on $L^{p }[0,1]$ due to Gurari\v{\i}  and Macaev \cite{gurariui_lacunary_1966}. Namely they proved that, fixed $p\in [1,+\infty)$, if a sequence $\{n_{k}\}_{k\geq 0}$ is lacunary then there exist two positive constants $A$ and $B$ such that
\begin{equation}\label{Eq:lacunary}
A\left( \sum_{k=0}^{\infty}|\beta_{k}|^{p }\right)^{1/p}\leq \left\Vert \sum_{k=0}^{\infty} \beta_{k}\sqrt[p]{n_{k}+1/p} \, 
t^{n_{k}} \right\Vert _{L^{p}}\leq B\left( \sum_{k=0}^{\infty}|\beta_{k}|^{p }\right)^{1/p},
\end{equation}
for every $\{\beta_k\}\in \ell^p$.

Take now $f(z)=\sum_{k=0}^{\infty}\alpha_{k}z^{n_{k}}$ a holomorphic function in the unit disc. Fix $\theta\in [0,2\pi]$ and write $\beta_{k}:=\frac{\alpha_{k}}{\sqrt[p]{n_{j}+1/p}}e^{i\theta n_{k} }$ if $k\geq 0$.
By \eqref{Eq:lacunary},
\begin{equation*}\label{Eq:lacunary2}
\begin{split}
A\left( \sum_{k=0}^{\infty}\frac{|\alpha_{k}|^{p }}{n_{k}+1/p}\right)^{1/p}&\leq \left\Vert \sum_{k=0}^{\infty} \alpha_{k} \, 
r^{n_{k}} e^{i\theta n_{k}}\right\Vert _{L^{p}}=\left(\int_{0}^{1}|f(re^{i\theta})|^{p}\right)^{1/p}\\
&\leq B\left( \sum_{k=0}^{\infty}\frac{|\alpha_{k}|^{p }}{n_{k}+1/p}\right)^{1/p}.
\end{split}
\end{equation*}
Now, looking at the very definition of $\rho_{p,q}$ we get the result. 
\end{proof}
%

\subsection{Evaluating functionals} This subsection is devoted to the functionals $f\mapsto f(z)$ and $f\mapsto f'(z)$. We prove that both of them are bounded and estimate their norms. We will need the following inclusion. 

\begin{proposition}\label{Hardyineq}
Let $0< s\leq+\infty$. Then
$H^{s}\subset RM(p,q)$ if and only if  $\frac{1}{p}+\frac{1}{q}\geq \frac{1}{s}$.
\end{proposition}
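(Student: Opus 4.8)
The plan is to prove both directions by reducing everything to the test functions $f_\alpha(z)=(1-z)^{-\alpha}$ from Example~\ref{ex1} together with the known behaviour of Hardy spaces. Recall from Example~\ref{ex1} that $f_\alpha \in RM(p,q)$ if and only if $\alpha < \frac1p+\frac1q$, and recall the classical fact that $f_\alpha \in H^s$ if and only if $\alpha < \frac1s$. These two facts are exactly matched to the threshold $\frac1p+\frac1q \geq \frac1s$ appearing in the statement, so the necessity direction will come essentially for free from testing the inclusion on the family $\{f_\alpha\}$.

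For the \emph{sufficiency} direction ($\frac1p+\frac1q\geq\frac1s \Rightarrow H^s\subset RM(p,q)$), my strategy is to establish a pointwise/norm estimate bounding $\rho_{p,q}(f)$ by $\|f\|_{H^s}$. First I would handle the endpoint case where the inclusion is cleanest and then interpolate or reduce. A natural route is to split the radial integral: for $f\in H^s$ one controls $\int_0^1|f(re^{i\theta})|^p\,dr$ using the standard Hardy-space growth estimate $|f(re^{i\theta})|\lesssim \|f\|_{H^s}(1-r)^{-1/s}$ near the boundary, but this crude bound only gives the strict-inequality range, so more care is needed at the threshold. The cleaner approach is to use the already-established identity $RM(\infty,q)=H^q$ and the Riesz--Fej\'er-type inequality \eqref{Riesz-Fejer}, combined with the monotonicity of the $RM(p,q)$ scale in its parameters. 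Concretely, I would first prove $H^s\subset RM(s,\infty)$ (this is precisely \eqref{Riesz-Fejer}, giving $\frac1p=\frac1s$, $\frac1q=0$, so $\frac1p+\frac1q=\frac1s$, the boundary case), then prove $H^s=RM(\infty,s)\subset RM(p,q)$ whenever the $RM$-inclusion holds, and finally chain these through the inclusion relations among the $RM$ spaces themselves.

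The cleanest self-contained argument, which I would actually write, is to combine two building blocks: (i) the Riesz--Fej\'er inequality \eqref{Riesz-Fejer}, which is exactly $H^s\subset RM(s,\infty)$; and (ii) the trivial identity $H^s\subset RM(\infty,s)=H^s$. Once these are in hand, the general case $\frac1p+\frac1q\geq\frac1s$ follows from the inclusion theorem for the $RM$ spaces themselves (Theorem~\ref{thm:inclusion}, stated later) applied to interpolate between the corner cases $RM(s,\infty)$ and $RM(\infty,s)$; since the point $(\frac1p,\frac1q)$ with $\frac1p+\frac1q\geq\frac1s$ lies in the region dominated by these corners, $RM(s,\infty)\cap RM(\infty,s)\subset RM(p,q)$, and as both corners contain $H^s$ we conclude $H^s\subset RM(p,q)$. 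Because the statement only invokes results \emph{stated} earlier, I would instead give a direct computation: estimate $\rho_{p,q}(f)$ by interpolating the radial $L^p$-integrability against the angular $L^q$-integrability using H\"older's inequality, exploiting that the constraint $\frac1p+\frac1q\geq\frac1s$ leaves room to distribute one factor of $s$-integrability radially and one angularly.

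The main obstacle I anticipate is the \emph{boundary case} $\frac1p+\frac1q=\frac1s$, where naive pointwise majorization of $|f|$ by its boundary growth fails to be integrable and one must use the genuine Riesz--Fej\'er inequality rather than crude size bounds. Away from this critical hyperplane (strict inequality) the sufficiency is soft: one loses a power and everything converges, exactly as in Example~\ref{ex1}. For necessity, the only subtlety is verifying that the failure is witnessed precisely on the correct side of the threshold; since both the $H^s$ membership of $f_\alpha$ and its $RM(p,q)$ membership are governed by sharp strict inequalities with matching constants, choosing $\alpha$ in the gap $\frac1s \le \alpha < \frac1p+\frac1q$ (when $\frac1p+\frac1q > \frac1s$ fails, i.e.\ $\frac1p+\frac1q<\frac1s$) produces a function in $H^s\setminus RM(p,q)$, completing the contrapositive.
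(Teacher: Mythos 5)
Your final plan --- necessity by testing on the family $f_\alpha$ from Example~\ref{ex1}, sufficiency by reducing to the critical line $\tfrac{1}{p_1}+\tfrac{1}{q_1}=\tfrac{1}{s}$ via parameter monotonicity (H\"older twice) and then splitting the $s$-integrability between the radial direction (Riesz--Fej\'er) and the angular direction (the identity $RM(\infty,s)=H^s$, i.e.\ the non-tangential maximal theorem) --- is exactly the paper's proof, so in outline you have the right argument. However, two points need correction. First, the route you call ``the cleanest self-contained argument,'' namely invoking Theorem~\ref{thm:inclusion} to pass from the corner spaces $RM(s,\infty)$ and $RM(\infty,s)$ down to $RM(p,q)$, is not merely a matter of citing a result stated later in the paper: it is circular. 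The paper's proof of Theorem~\ref{thm:inclusion} disposes of the case $p_0=+\infty$ by declaring it to be ``nothing but'' Proposition~\ref{Hardyineq} and then assuming $p_0<+\infty$ throughout; since $RM(\infty,s)=H^s$, the inclusion $RM(\infty,s)\subset RM(p,q)$ \emph{is} the statement you are trying to prove, so no reordering of statements can legitimize that route. You abandon it, but for the wrong reason (exposition order rather than logic). Note also that, were Theorem~\ref{thm:inclusion} available, you would not need any intersection of corner spaces at all: the single inclusion $RM(\infty,s)\subset RM(p,q)$ already gives everything.

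Second, in your necessity paragraph the interval is written backwards: under the hypothesis $\tfrac1p+\tfrac1q<\tfrac1s$, the set of $\alpha$ with $\tfrac1s\le\alpha<\tfrac1p+\tfrac1q$ is empty. You want $\alpha$ with $\tfrac1p+\tfrac1q\le\alpha<\tfrac1s$; then $f_\alpha\in H^s$ (since $\alpha<\tfrac1s$) while $f_\alpha\notin RM(p,q)$ (since $\alpha\ge\tfrac1p+\tfrac1q$), which is precisely the paper's witness. Finally, be aware that the single sentence you devote to the ``direct computation'' is where all of the work lives: on the critical line one chooses $\lambda\in[0,1]$ with $\tfrac{1}{p_1}=\tfrac{\lambda}{s}$ and $\tfrac{1}{q_1}=\tfrac{1-\lambda}{s}$, writes $|f|^{p_1}=|f|^{s}\,|f|^{p_1(1-\lambda)}$, bounds the second factor by the radial supremum (using $q_1(1-\lambda)=s$), and applies Fej\'er--Riesz to $\int_0^1|f(re^{i\theta})|^s\,dr$ and the maximal theorem to $\int_0^{2\pi}\sup_r|f(re^{i\theta})|^s\,d\theta$. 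Your sketch is consistent with this computation but does not yet constitute it; as written, the sufficiency direction rests on an unexecuted step.
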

\begin{proof} The result is clear for $s=+\infty $ since $H^\infty$ is a subspace $RM(p,q)$ for every $p,q$. Thus, from now on we consider the case $s<+\infty$. 
Assume that $H^s\subset RM(p,q)$ and suppose that $\frac{1}{s}>\frac{1}{p}+\frac{1}{q}$. Take $\frac{1}{s}>\alpha>\frac{1}{p}+\frac{1}{q}$. Then the function $f_\alpha$ defined in Example~\ref{ex1} belongs to $RM(\infty,s)=H^s$ and not to $RM(p,q)$. A contradiction. Thus  $\frac{1}{p}+\frac{1}{q}\geq \frac{1}{s}$.

To see that converse implication we claim that $H^{s}\subset RM(p_1,q_1)$ whenever $1\leq s<+\infty$ and $\frac{1}{p_1}+\frac{1}{q_1}= \frac{1}{s}$. Assume for the moment that the claim holds.  Fix $p$ and $q$ such that $\frac{1}{p}+\frac{1}{q}\geq \frac{1}{s}$. We consider $p_1\geq p$ and $q_1\geq q$ such as $\frac{1}{p_1}+\frac{1}{q_1}=\frac{1}{s}$. By the claim  $H^s\subset RM(p_1,q_1)$. Moreover, it is easy prove that $RM(p_1,q_1)\subset RM(p,q)$ using H\"older's inequality twice for $p_1\geq p$ and $q_1\geq q$. 

Thus it remains to prove the claim. 
	By F\' ejer-Riesz theorem \cite[Theorem 3.13, p. 46]{duren_theory_2000}, we have that for each $f\in H^{s}$ and $\theta$,
	$$
	\left(\int_{0}^{1}|f(re^{i\theta})|^{s}\, dr\right)^{1/s}\lesssim \Vert f\Vert_{H^{s}}
	$$ 
(notice that, in particular, this implies that $H^s\subset RM(s,\infty)$). 
	Now, since $\frac{1}{p_1}+\frac{1}{q_1}= \frac{1}{s}$ we can take $\lambda\in [0,1]$ such that $\frac{1}{p_1}=\frac{\lambda}{s}$ and $\frac{1}{q_1}=\frac{1-\lambda}{s}$. Then
	\begin{align*}
	\rho_{p_1,q_1}(f)&=\left(\int_{0}^{2\pi} \left(\int_{0}^{1}|f(re^{i\theta})|^{p_1(1-\lambda)}|f(re^{i\theta})|^{p_1\lambda}\ dr\right)^{q_1/p_1}\ \frac{d\theta}{2\pi}\right)^{1/q_1}\\
	&\leq \left(\int_{0}^{2\pi} \sup_{r} |f(re^{i\theta})|^{q_1(1-\lambda)} \left(\int_{0}^{1}|f(re^{i\theta})|^{s}\ dr\right)^{q_1\lambda/s}\ \frac{d\theta}{2\pi}\right)^{1/q_1}\\
	&\leq \left(\int_{0}^{2\pi} \sup_{r} |f(re^{i\theta})|^{s} \left(\int_{0}^{1}|f(re^{i\theta})|^{s}\ dr\right)^{q_1\lambda/s}\ \frac{d\theta}{2\pi}\right)^{1/q_1}\\
	&\lesssim \Vert f\Vert_{H^{s}}^\lambda \left(\int_{0}^{2\pi} \sup_{r} |f(re^{i\theta})|^{s} \frac{d\theta}{2\pi}\right)^{1/q_1}=\Vert f\Vert_{H^{s}}^\lambda\rho_{\infty,s}(f)^{1-\lambda}<+\infty.
	\end{align*} 
	Hence, we have proved the claim and we are done.
	\end{proof}

\begin{proposition}\label{estimevfunc}
	Let $0< p,q \leq\infty$.  If $z\in \D$, then the functional $\delta_z:RM(p,q)\to \C$ given by $\delta_z(f):=f(z)$, for all $f\in RM(p,q)$, is continuous and 
	\begin{align*}
	\| \delta_{z}\|_{(RM(p,q))^{\ast}} \asymp \frac{1}{(1-|z|^2)^{\frac{1}{p}+\frac{1}{q}}}.
	\end{align*}
\end{proposition}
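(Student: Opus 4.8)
The plan is to establish matching upper and lower bounds for $\|\delta_z\|$. Throughout I would use that $\rho_{p,q}$ is rotation invariant (replacing $f(w)$ by $f(e^{i\psi}w)$ merely shifts the angular variable), so choosing $\psi=-\arg z$ I may assume $z=\rho\in[0,1)$. Write $d=1-\rho$, so $1-|z|^2\asymp d$, and set $F(t)=\left(\int_0^1|f(re^{it})|^p\,dr\right)^{1/p}$ (the supremum in $r$ when $p=\infty$), so that $\rho_{p,q}(f)$ is the $L^q(dt/2\pi)$-norm of $F$.

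\textbf{Upper bound.} Put $s=\min(p,q)$; since $f$ is analytic, $|f|^s$ is subharmonic for every $s>0$. For $\rho>1/2$ the disc $D(\rho,d/4)$ lies in $\D$, so the sub-mean value property gives $|f(\rho)|^s\lesssim d^{-2}\int_{D(\rho,d/4)}|f|^s\,dA$. Passing to polar coordinates, $D(\rho,d/4)$ projects into an angular window $|t|\le Cd$ and radii $|r-\rho|\le d/4$ (here $r\asymp 1$), so
$$|f(\rho)|^s\lesssim \frac{1}{d^2}\int_{|t|\le Cd}\int_{\rho-d/4}^{\rho+d/4}|f(re^{it})|^s\,dr\,dt.$$
I would then apply H\"older in $r$ with exponent $p/s\ge 1$ (using the interval length $\asymp d$) to bound the inner integral by $d^{\,1-s/p}F(t)^s$, and H\"older in $t$ with exponent $q/s\ge 1$ (using the window length $\asymp d$) to bound $\int_{|t|\le Cd}F(t)^s\,dt$ by $d^{\,1-s/q}\rho_{p,q}(f)^s$. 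Collecting the powers of $d$ yields $|f(\rho)|^s\lesssim d^{-s/p-s/q}\rho_{p,q}(f)^s$, i.e. $|f(\rho)|\lesssim (1-|z|^2)^{-1/p-1/q}\rho_{p,q}(f)$. The range $|z|\le 1/2$ is handled identically on the fixed disc $D(z,1/4)\subset\D$, where $1-|z|^2\asymp 1$. The choice $s=\min(p,q)$ is exactly what makes both H\"older steps legitimate when $q<p$ (or $p<q$); using the exponent $p$, as in the Bergman case, would fail for $q<p$.

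\textbf{Lower bound.} I would test $\delta_\rho$ on $g_\gamma(w)=(1-\rho w)^{-\gamma}$, fixing any $\gamma>\tfrac{1}{p}+\tfrac{1}{q}$ (say $\gamma=\tfrac{1}{p}+\tfrac{1}{q}+1$). Then $g_\gamma(\rho)=(1-\rho^2)^{-\gamma}\asymp d^{-\gamma}$, while $\rho_{p,q}(g_\gamma)$ is estimated by the device of Example~\ref{ex1}: using $|1-\rho re^{it}|^2\asymp(1-\rho r)^2+t^2$ for $r\in[1/2,1]$, $|t|\le\pi$, and reducing the radial integral to $[1/2,1]$, the substitution $u=1-\rho r$ gives $F(t)^p\asymp\int_d^{1}(u^2+t^2)^{-\gamma p/2}\,du$, which is $\asymp d^{\,1-\gamma p}$ for $|t|\le d$ and $\asymp t^{\,1-\gamma p}$ for $d<t\le\pi$. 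Integrating $F^q$ over $t$ and invoking $\gamma p>1+p/q$ (so that the window $|t|\le d$ and the tail contribute the same power) gives $\rho_{p,q}(g_\gamma)\asymp d^{(1/p+1/q)-\gamma}$. Hence
$$\|\delta_\rho\|\ge\frac{|g_\gamma(\rho)|}{\rho_{p,q}(g_\gamma)}\asymp\frac{d^{-\gamma}}{d^{(1/p+1/q)-\gamma}}=d^{-(1/p+1/q)},$$
the desired matching bound. The endpoints are the same computation with $\int F^q$ replaced by $\sup F$ (for $q=\infty$) and $F(t)$ by $\sup_r|g_\gamma(re^{it})|$ (for $p=\infty$); for $p=\infty$ one may instead quote the classical point-evaluation estimate for $H^q$, since $RM(\infty,q)=H^q$.

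\textbf{Main obstacle.} The delicate point is the lower bound. The naive critical choice $\gamma=\tfrac{1}{p}+\tfrac{1}{q}$ produces $\rho_{p,q}(g_\gamma)\asymp(\log\tfrac{1}{d})^{1/q}$ and so loses a logarithmic factor; taking $\gamma$ strictly above the critical exponent makes the angular tail of $F$ decay at the correct rate and removes the logarithm, giving the sharp power. On the upper-bound side the only real subtlety is the subharmonic exponent $s=\min(p,q)$, which decouples the two integrability directions and makes the argument uniform across all ranges of $p$ and $q$.
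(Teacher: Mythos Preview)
Your proof is correct. The upper bound is essentially the paper's argument: subharmonicity of $|f|^{s}$ on a disc of radius comparable to $1-|z|$, followed by H\"older in $r$ and then in $t$; your explicit choice $s=\min(p,q)$ is exactly the condition the paper imposes when it writes ``if $p,q\geq p_{0}$''.

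The lower bound, however, follows a genuinely different route. The paper does not test on any explicit function in $RM(p,q)$; instead it invokes the embedding $H^{r}\subset RM(p,q)$ with $\tfrac{1}{r}=\tfrac{1}{p}+\tfrac{1}{q}$ (Proposition~\ref{Hardyineq}, whose proof relies on the F\'ejer--Riesz theorem), and then quotes the classical Hardy-space fact $\|\delta_{z}\|_{(H^{r})^{\ast}}\asymp(1-|z|^{2})^{-1/r}$. Your approach tests directly on $g_{\gamma}(w)=(1-\bar z\,w)^{-\gamma}$ with $\gamma>\tfrac{1}{p}+\tfrac{1}{q}$ and computes $\rho_{p,q}(g_{\gamma})\asymp(1-|z|)^{\,1/p+1/q-\gamma}$ by the dyadic splitting of Example~\ref{ex1}. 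This is more self-contained (no F\'ejer--Riesz, no external Hardy-space estimate) and gives an explicit near-extremal family, at the cost of a short calculation that the reader must check is uniform in $|z|$; the paper's route is one line once Proposition~\ref{Hardyineq} is in hand. Your remark that the critical choice $\gamma=\tfrac{1}{p}+\tfrac{1}{q}$ loses a logarithm is also correct and is the reason one must take $\gamma$ strictly supercritical.
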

\begin{proof}  The subharmonicity of the function $|f|^{p_{0}}$ shows that for all $z\in \D$, 
\begin{align*}
|f(z)|^{p_{0}}\leq \frac{1}{\pi r^2} \int_{B(z,r)} |f(w)|^{p_{0}}\ dA(w)
\end{align*}
where $r=1-|z|$, $B(z,r)$ is the disc centered at $z$ with radius $r$ and $dA(w)$ means integration with respect to the Lebesgue measure on the unit disc $\D$.

Due to the rotational invariance of the space $RM(p,q)$ we can assume that $z$ belongs to the interval $[0,1)$. Take $f\in RM(p,q)$.
Fix  $p_0>0$. To prove the result we may assume that $\frac{1}{2}\leq z<1$.
Bearing in mind  that $$\arcsin\left(\frac{1-z}{z}\right) \leq \pi (1-z)$$ for $\frac{1}{2}\leq z<1$, we have $|{\rm Arg}(w)|\leq \pi r$ for $w\in B(z,r)$. It follows 
	\begin{align*}
	\frac{1}{\pi r^2} \int_{B(z,r)} |f(w)|^{p_{0}}\ dA(w) \leq \frac{1}{\pi r^2} \int_{-\pi r}^{\pi r} \left(\int_{1-2r}^{1} |f(\rho e^{i\theta})|^{p_{0}}\ d\rho\right)\ d\theta.
	\end{align*}
	If $p,q\geq p_{0}$, H\"older's Inequality twice gives
	\begin{align*}
	&\frac{1}{\pi r^2} \int_{-\pi r}^{\pi r} \left(\int_{1-2r}^{1} |f(\rho e^{i\theta})|^{p_{0}}\ d\rho\right)\ d\theta\leq \frac{2}{ r^2} \int_{-\pi r}^{\pi r} \left(\int_{1-2r}^{1} |f(\rho e^{i\theta})|^p \ d\rho \right)^{\frac{p_0}{p}} (2r)^{1-\frac{p_0}{p}}\ \frac{d\theta}{2\pi}\\
	&\leq \frac{2^{2-\frac{p_0}{p}}}{ r^{1+\frac{p_0}{p}}} \left(\int_{-\pi r}^{\pi r} \left(\int_{1-2r}^{1} |f(\rho e^{i\theta})|^p \ d\rho \right)^{\frac{q}{p}}\ \frac{d\theta}{2\pi}\right)^{\frac{p_0}{q}}  r^{1-\frac{p_0}{q}}\leq  \frac{2^{2-\frac{p_0}{p}}}{(1-|z|)^{p_{0}\left(\frac{1}{p}+\frac{1}{q}\right)}}\rho_{p,q}^{p_{0}}(f).
	\end{align*}
	So that
	\begin{align}\label{eqfung}
	|f(z)|^{p_{0}}\leq \frac{2^{2-\frac{p_0}{p}}}{(1-|z|)^{\frac{^{p_{0}}}{p}+\frac{^{p_{0}}}{q}}} \rho_{p,q}(f)^{p_{0}}.
	\end{align}
	Hence $\delta_z$ is continuous and 	
	$
	\|\delta_{z}\|\lesssim 1/{(1-|z|)^{\frac{1}{p}+\frac{1}{q}}}.
	$

To see the converse inequality, take $r$ such that $\frac{1}{p}+\frac{1}{q}=\frac{1}{r}$. Using constant functions we see the converse if $r=+\infty$. In the remaining cases, by Proposition \ref{Hardyineq}, $RM(\infty,r)=H^{r}\subset RM(p,q)$ and thus
	\begin{align*}
	\|\delta_{z}\|_{(RM(p,q))^{\ast}}\gtrsim \|\delta_{z}\|_{(RM(\infty,r))^{\ast}}\asymp \|\delta_z\|_{(H^r)^{\ast}}=\frac{1}{(1-|z|^2)^{1/r}},
	\end{align*}
where we have used \cite[Exercise 2, p. 86]{vukotic_multiplier_2016} or \cite[Exercise 5, p. 85]{garnett_bounded_2007}. 
\end{proof}

\begin{proposition}\label{estiderfunc}
	Let $1\leq  p,q \leq\infty$.  If $z\in \D$, then the functional $\delta'_z:RM(p,q)\to \C$ given by $\delta'_z(f):=f'(z)$, for all $f\in RM(p,q)$, is continuous and 
	\begin{align*}
	\| \delta'_{z}\|_{(RM(p,q))^{\ast}}\asymp \frac{1}{(1-|z|^2)^{\frac{1}{p}+\frac{1}{q}+1}}.
	\end{align*}
\end{proposition}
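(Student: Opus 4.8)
The plan is to prove matching upper and lower estimates, both building on Proposition~\ref{estimevfunc}. By the rotational invariance of $\rho_{p,q}$ (and since $|f_\lambda'(z)|=|f'(\lambda z)|$ for $f_\lambda(w)=f(\lambda w)$ with $|\lambda|=1$) I may assume $z=x\in[0,1)$, and I would dispose of the range $|z|<1/2$ by a fixed Cauchy estimate together with the function $f(w)=w$, so that all the content lies in the regime $x\to 1^-$. Throughout write $r=1-|z|$.

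For the upper bound I would combine Cauchy's estimate with the pointwise bound already contained in Proposition~\ref{estimevfunc}. Cauchy's integral formula for the derivative on the circle $|w-z|=r/2$ gives
\[
|f'(z)|\le \frac{2}{r}\,\sup_{|w-z|=r/2}|f(w)|.
\]
Every such $w$ satisfies $|w|\le |z|+r/2=(1+|z|)/2$, hence $1-|w|\ge r/2$, so Proposition~\ref{estimevfunc} yields $|f(w)|\lesssim (1-|w|)^{-(1/p+1/q)}\rho_{p,q}(f)\lesssim r^{-(1/p+1/q)}\rho_{p,q}(f)$. Substituting, $|f'(z)|\lesssim r^{-(1+1/p+1/q)}\rho_{p,q}(f)$, and since $r=1-|z|\asymp 1-|z|^2$ this is the claimed upper estimate for $\|\delta'_z\|$.

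For the lower bound I would exhibit an extremal test function, mirroring the argument used for $\delta_z$. Set $\tfrac1r=\tfrac1p+\tfrac1q$ and, assuming first $r<\infty$, take
\[
k_z(w)=\frac{(1-|z|^2)^{1/r}}{(1-\bar z w)^{2/r}}.
\]
A direct computation of the boundary integral gives $\|k_z\|_{H^r}=1$, so by the inclusion $H^r\subset RM(p,q)$ of Proposition~\ref{Hardyineq}, which is moreover continuous, one has $\rho_{p,q}(k_z)\lesssim 1$; differentiating, $k_z'(z)=\tfrac{2\bar z}{r}(1-|z|^2)^{-(1/r+1)}\asymp (1-|z|^2)^{-(1/p+1/q+1)}$ for $|z|\ge 1/2$. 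Hence $\|\delta'_z\|\ge |k_z'(z)|/\rho_{p,q}(k_z)\gtrsim (1-|z|^2)^{-(1/p+1/q+1)}$. The one remaining case $r=\infty$ is $p=q=\infty$, i.e.\ $RM(\infty,\infty)=H^\infty$, where the disc automorphism $\varphi(w)=(z-w)/(1-\bar z w)$ has $\|\varphi\|_{H^\infty}=1$ and $|\varphi'(z)|=(1-|z|^2)^{-1}$, matching the exponent $1/p+1/q+1=1$.

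The main obstacle I anticipate is the lower bound rather than the upper bound. Two points need care: first, verifying that the inclusion $H^r\hookrightarrow RM(p,q)$ is continuous, not merely a set inclusion, so that $\rho_{p,q}(k_z)\lesssim \|k_z\|_{H^r}$; this follows from the closed graph theorem, all the spaces involved being complete and carrying bounded point evaluations. Second, in the range $1/p+1/q>1$ one has $r<1$, so $H^r$ is only an $F$-space and the argument must be kept within the quasi-Banach setting, with the endpoint $p=q=\infty$ isolated as above. The upper bound, by contrast, is a clean consequence of Proposition~\ref{estimevfunc} and introduces no new difficulty.
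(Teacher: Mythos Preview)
Your proof is correct and follows essentially the same route as the paper: the upper bound via Cauchy's formula on the circle of radius $(1-|z|)/2$ combined with Proposition~\ref{estimevfunc}, and the lower bound via the inclusion $H^r\subset RM(p,q)$ from Proposition~\ref{Hardyineq} (with the automorphism handling $r=\infty$). The only cosmetic difference is that the paper appeals directly to the known estimate $\|\delta'_z\|_{(H^r)^*}\asymp (1-|z|^2)^{-(1/r+1)}$ from \cite{garnett_bounded_2007}, whereas you produce the extremal function $k_z$ explicitly; your concern about continuity of the inclusion is already built into the proof of Proposition~\ref{Hardyineq}, which gives quantitative bounds, so no separate closed-graph argument is needed.
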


\begin{proof} Again we assume that $z\in[0,1)$. Fix $z\in [0,1)$ and denote by $C$ the boundary of the disc centered at $z$ and with radius $(1-|z|)/2$. The Cauchy's integral formula  and the estimate of the evaluation functional given in Proposition~\ref{estimevfunc} show 
	\begin{align*}
	|f'(z)|&\leq \frac{1}{\pi}\int_{0}^{2\pi} \frac{|f(z+\frac{1-z}{2}e^{i\theta})|}{(1-z)}\ d\theta\lesssim \frac{1}{\pi(1-z)}\int_{0}^{2\pi} \frac{\rho_{p,q}(f)}{(1-|z+\frac{1-z}{2}e^{i\theta}|)^{\frac{1}{p}+\frac{1}{q}}}\ d\theta\\
	&\leq \frac{2^{\frac{1}{p}+\frac{1}{q}+1}}{(1-z)^{\frac{1}{p}+\frac{1}{q}+1}} \rho_{p,q}(f)\lesssim \frac{\rho_{p,q}(f)}{(1-z^2)^{\frac{1}{p}+\frac{1}{q}+1}} . 
	\end{align*}

	To prove the converse inequality, we will use a similar argument to the one given in  Proposition \ref{estimevfunc}. Using Proposition~\ref{Hardyineq} we have that $RM(\infty,r)=H^{r}\subset RM(p,q)$ for $\frac{1}{p}+\frac{1}{q}=\frac{1}{r}$. So, it follows 
	\begin{align*}
	\|\delta'_{z}\|_{(RM(p,q))^{\ast}}\gtrsim \|\delta'_{z}\|_{(RM(\infty,r))^{\ast}}\asymp \|\delta'_z\|_{(H^r)^{\ast}}.
	\end{align*}
On the one hand, if $r<+\infty$, since for the Hardy space $H^r$ it is known that $\|\delta'_{z}\|_{(H^r)^{\ast}}\asymp \frac{1}{(1-|z|^2)^{\frac{1}{r}+1}}$   \cite[Exercise 5, p. 85]{garnett_bounded_2007}, we obtain 
	\begin{align*}
	\|\delta'_{z}\|_{(RM(p,q))^{\ast}}\gtrsim \frac{1}{(1-|z|^2)^{\frac{1}{r}+1}}.
	\end{align*}
On the other hand, if $r=+\infty$, take the function $\varphi(w):=\frac{w-z}{1-\overline z w}$, $w\in \D$. Since $\varphi$ is an automorphism of the unit disc, we have that $||\varphi||_{{H^{\infty}}}=1$ and 
$$
||\delta'_{z}||_{{(H^{\infty})^{*}}}\geq |\varphi'(z)|=\frac{1}{1-|z|^{2}}.
$$
And we end with a similar argument.
\end{proof}

Combining Propositions \ref{estimevfunc} and \ref{estiderfunc}, the next corollary follows.

\begin{corollary}\label{evderestim}
	Let $1\leq p, q \leq+\infty$.  If $z\in \D$, then
	\begin{align*}
	\|\delta'_{z}\|_{(RM(p,q))^{\ast}}\asymp \frac{\|\delta_{z}\|_{(RM(p,q))^{\ast}}}{1-|z|^2}.
	\end{align*}
\end{corollary}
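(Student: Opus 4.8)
The plan is to derive the equivalence directly from the two preceding propositions by taking a quotient of their estimates. By Proposition~\ref{estimevfunc} there are constants depending only on $p$ and $q$ such that
$$
\|\delta_z\|_{(RM(p,q))^{\ast}}\asymp \frac{1}{(1-|z|^2)^{\frac1p+\frac1q}},
$$
and by Proposition~\ref{estiderfunc} there are constants, again depending only on $p$ and $q$, such that
$$
\|\delta'_z\|_{(RM(p,q))^{\ast}}\asymp \frac{1}{(1-|z|^2)^{\frac1p+\frac1q+1}}.
$$
The crucial feature, guaranteed by the statements of both propositions, is that the implicit constants in these two relations are independent of the point $z\in\D$; they depend only on the parameters $p$ and $q$. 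First I would record the two-sided bounds as genuine inequalities with explicit comparison constants $c_1,C_1$ for $\delta_z$ and $c_2,C_2$ for $\delta'_z$, so that the constants are available for combination.

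With this in hand, the conclusion is immediate. Dividing the estimate for $\|\delta'_z\|_{(RM(p,q))^{\ast}}$ by the estimate for $\|\delta_z\|_{(RM(p,q))^{\ast}}$ cancels the common factor $(1-|z|^2)^{-(1/p+1/q)}$ and leaves
$$
\frac{\|\delta'_z\|_{(RM(p,q))^{\ast}}}{\|\delta_z\|_{(RM(p,q))^{\ast}}}\asymp \frac{1}{1-|z|^2},
$$
which is exactly the asserted equivalence after multiplying through by $\|\delta_z\|_{(RM(p,q))^{\ast}}$. There is essentially no obstacle here: the only point one must check is that the comparison constants can be combined uniformly in $z$, and this is automatic since each $\asymp$ in Propositions~\ref{estimevfunc} and~\ref{estiderfunc} hides a constant that does not vary with $z$. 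Hence the corollary follows, with final comparison constants $c_2/C_1$ and $C_2/c_1$.
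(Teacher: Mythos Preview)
Your proof is correct and is exactly the approach the paper takes: the paper simply states that the corollary follows by combining Propositions~\ref{estimevfunc} and~\ref{estiderfunc}, which is precisely the quotient argument you carry out. There is nothing to add.
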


\subsection{Density of polynomials and separability}

Next lemma is obvious if $f$ is continuous (and then uniformly continuous) and by density of such functions we extend to the whole space:

\begin{lemma}\label{p2}
	Let $f\in L^{p}([0,1])$, $1\leq p<+\infty$. Then 
	\begin{align}\label{eq3}
	\lim_{\rho\to 1}\int_{0}^{1}|f(x)-f(\rho x)|^{p}\ dx =0.
	\end{align}
\end{lemma}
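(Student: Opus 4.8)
The plan is to follow exactly the reduction indicated in the statement: first establish \eqref{eq3} for continuous $f$, where it is immediate by uniform continuity, and then transfer it to a general $f\in L^p([0,1])$ using density of the continuous functions together with a uniform bound on the dilation operators on $L^p$.

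First I would treat the case $f\in C([0,1])$. Such an $f$ is uniformly continuous on the compact interval $[0,1]$, and for $x\in[0,1]$ and $\rho\in(0,1)$ one has $|x-\rho x|=(1-\rho)x\le 1-\rho$. Hence, given $\eta>0$ and the corresponding modulus of continuity, for $\rho$ close enough to $1$ we get $|f(x)-f(\rho x)|\le \eta$ simultaneously for all $x\in[0,1]$, so that $\int_0^1|f(x)-f(\rho x)|^p\,dx\le \eta^p$. Letting $\eta\to0$ proves \eqref{eq3} for continuous functions.

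For the general case, fix $\varepsilon>0$ and choose $g\in C([0,1])$ with $\|f-g\|_{L^p([0,1])}<\varepsilon$, which is possible since $C([0,1])$ is dense in $L^p([0,1])$ for $1\le p<+\infty$. Writing $f(\rho\,\cdot)$ for the dilated function and using Minkowski's inequality (valid because $p\ge1$), I would split
\begin{equation*}
\|f(\cdot)-f(\rho\,\cdot)\|_{L^p}\le \|f-g\|_{L^p}+\|g(\cdot)-g(\rho\,\cdot)\|_{L^p}+\|g(\rho\,\cdot)-f(\rho\,\cdot)\|_{L^p}.
\end{equation*}
The first term is $<\varepsilon$, and the middle term tends to $0$ as $\rho\to1$ by the continuous case already handled.

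The only point requiring a small computation, and hence the main (mild) obstacle, is the last term, where the dilation must be controlled uniformly in $\rho$. Here I would use the change of variables $u=\rho x$: for $\rho\in(0,1)$,
\begin{equation*}
\int_0^1|g(\rho x)-f(\rho x)|^p\,dx=\frac{1}{\rho}\int_0^\rho|g(u)-f(u)|^p\,du\le \frac{1}{\rho}\,\|f-g\|_{L^p}^p<\frac{\varepsilon^p}{\rho}.
\end{equation*}
Restricting to $\rho\ge 1/2$, say, this term is at most $2^{1/p}\varepsilon$. Taking $\limsup_{\rho\to1}$ of the three-term bound then yields $\limsup_{\rho\to1}\|f(\cdot)-f(\rho\,\cdot)\|_{L^p}\le \varepsilon+0+\varepsilon$, and since $\varepsilon>0$ is arbitrary, \eqref{eq3} follows.
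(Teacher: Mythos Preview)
Your proof is correct and follows exactly the approach the paper indicates: the paper merely remarks that the lemma is obvious for continuous (hence uniformly continuous) $f$ and extends by density, and you have simply written out those details carefully, including the uniform $L^p$ bound on the dilation operators via the change of variables $u=\rho x$.
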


Given a holomorphic function $f$ in the unit disc and $0<r<1$, we define $f_{r}(z):=f(rz)$, for all $z\in \D$.
\begin{proposition}\label{p3}
Let $1\leq p\leq +\infty$, $1\leq q<+\infty$.	If $f\in RM(p,q)$, then $\rho_{p,q}(f-f_{r})\rightarrow 0$ when $r\rightarrow 1^{-}$.
\end{proposition}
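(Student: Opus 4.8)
The plan is to peel off the outer integral in $t$ and run a dominated-convergence argument there, treating the inner radial integral with Lemma~\ref{p2}. For $t\in[0,2\pi)$ I write the radial slice $g_t(s):=f(se^{it})$, $s\in[0,1]$, and note that $f_r(se^{it})=g_t(rs)$. Setting
$$F(t):=\Big(\int_0^1|g_t(s)|^p\,ds\Big)^{1/p}\ \ (p<\infty),\qquad F(t):=\sup_{s\in[0,1)}|g_t(s)|\ \ (p=\infty),$$
we have $\rho_{p,q}(f)^q=\frac{1}{2\pi}\int_0^{2\pi}F(t)^q\,dt<\infty$, so $F\in L^q(\T)$ and $F(t)<\infty$ for a.e.\ $t$. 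Writing $G_r(t)$ for the same inner quantity formed from $g_t-(g_t)_r$, where $(g_t)_r(s)=g_t(rs)$, the goal becomes $\int_0^{2\pi}G_r(t)^q\,dt\to0$ as $r\to1^-$, which I would deduce from (i) $G_r(t)\to0$ for a.e.\ $t$ and (ii) an $r$-independent integrable majorant for $G_r(\cdot)^q$.

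For the pointwise convergence, fix $t$ with $F(t)<\infty$. When $p<\infty$ the slice $g_t$ lies in $L^p[0,1]$, and Lemma~\ref{p2} (applied to its real and imaginary parts) yields $G_r(t)=\|g_t-(g_t)_r\|_{L^p[0,1]}\to0$. When $p=\infty$ I would use that $RM(\infty,q)=H^q$, so by Fatou's theorem the radial limit of $f$ exists at $e^{it}$ for a.e.\ $t$; for such $t$ the continuous function $g_t$ on $[0,1)$ extends continuously to the compact interval $[0,1]$, hence is uniformly continuous, and since $|s-rs|\le1-r$ we get $G_r(t)=\sup_{s}|g_t(s)-g_t(rs)|\le\omega_{g_t}(1-r)\to0$, with $\omega_{g_t}$ the modulus of continuity of $g_t$.

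For the domination I would use the triangle inequality in $L^p[0,1]$ (resp.\ the sup norm) together with the change of variables $u=rs$:
$$\Big(\int_0^1|g_t(rs)|^p\,ds\Big)^{1/p}=r^{-1/p}\Big(\int_0^r|g_t(u)|^p\,du\Big)^{1/p}\le r^{-1/p}F(t).$$
Hence $G_r(t)\le(1+r^{-1/p})F(t)\le3F(t)$ for all $r\ge1/2$ (and $G_r(t)\le2F(t)$ when $p=\infty$), so $G_r(t)^q\le3^qF(t)^q$ with $F^q\in L^1(\T)$. Taking any sequence $r_n\uparrow1$ with $r_n\ge1/2$, dominated convergence gives $\int_0^{2\pi}G_{r_n}(t)^q\,dt\to0$, and since the sequence is arbitrary, $\rho_{p,q}(f-f_r)\to0$ as $r\to1^-$.

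The only genuinely delicate point is the case $p=\infty$: Lemma~\ref{p2} is an $L^p$ statement and says nothing about the inner supremum, so the pointwise convergence $G_r(t)\to0$ must instead be extracted from the a.e.\ existence of radial limits (via $RM(\infty,q)=H^q$), which promotes each radial slice to a uniformly continuous function on $[0,1]$. Everything else reduces to Minkowski's inequality and a single application of the dominated convergence theorem.
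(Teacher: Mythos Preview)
Your proof is correct and follows essentially the same approach as the paper's: dominated convergence in the outer variable $t$, with Lemma~\ref{p2} supplying the a.e.\ pointwise convergence and the change-of-variables estimate $G_r(t)\le 3F(t)$ (for $r\ge 1/2$) supplying the integrable majorant. The only difference is that for $p=\infty$ the paper simply invokes the known Hardy-space result \cite[Theorem~2.6]{duren_theory_2000} together with the equivalence $RM(\infty,q)=H^q$, whereas you give a self-contained argument via Fatou's radial limits and uniform continuity of the extended radial slice; both routes are valid.
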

\begin{proof} Assume first that $p$ is finite. We define
	\begin{align*}
	R_{p}(\theta,f)=\left(\int_{0}^{1} |f(ue^{\theta i})|^{p}\ du\right)^{1/p},
	\end{align*}
	what it is well-defined for almost every $\theta$.
	Easily we can see that $R_{p}(\theta,f-f_r)\leq R_{p}(\theta,f_r)+R_{p}(\theta,f)$. Now we consider $r>1/2$, then we have 
	\begin{align*}
	R_{p}(\theta,f_r)^{p}=\int_{0}^{1} |f(ru e^{\theta i})|^{p}\ du =\int_{0}^{r} |f(ue^{\theta i})|^{p} \frac{du}{r}\leq \frac{1}{r} R_{p}(\theta,f)^{p}<2R_{p}(\theta,f)^{p}.
	\end{align*}
	Hence, we have that $R_{p}(\theta,f-f_r)\leq 3 R_{p}(\theta,f)$. By Lemma~\ref{p2},   $R_{p}(\theta,f-f_r)\rightarrow 0$, when $r\rightarrow 1$. Since the function $[0,2\pi]\ni \theta\mapsto R_{p}(\theta,f)$ is integrable, using the dominated convergence theorem we conclude the proof for $p<+\infty$. For $H^p$ spaces, this result is known \cite[Theorem 2.6, p. 21]{duren_theory_2000}.
\end{proof}

\begin{proposition}\label{desnsity-polynomials}
Let $1\leq p\leq +\infty$, $1\leq q<+\infty$.	Polynomials are dense in $RM(p,q)$. In particular, $RM(p,q)$ is a separable space.
\end{proposition}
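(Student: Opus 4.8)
The plan is to approximate in two stages, relying on Proposition~\ref{p3} together with the elementary observation that, for every bounded holomorphic $g$ on $\D$,
$$
\rho_{p,q}(g)\leq \sup_{z\in\D}|g(z)|,
$$
valid in all the relevant ranges of $p,q$: it follows straight from Definition~\ref{definition} by replacing $|g(re^{it})|$ by the constant bound $\sup|g|$ and integrating over the normalized measures. The point of this inequality is that it lets us transfer \emph{uniform} approximation on the closed unit disc into approximation in the $\rho_{p,q}$-norm.

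So, fix $f\in RM(p,q)$ with $1\leq p\leq+\infty$, $1\leq q<+\infty$, and let $\e>0$. First I would invoke Proposition~\ref{p3}: since $\rho_{p,q}(f-f_r)\to 0$ as $r\to 1^-$, there is $r\in(0,1)$ with $\rho_{p,q}(f-f_r)<\e/2$. Next, because $f$ is holomorphic on $\D$, the dilate $f_r(z)=f(rz)$ is holomorphic on the disc of radius $1/r>1$, hence on a neighborhood of $\overline{\D}$; consequently its Taylor series converges uniformly to $f_r$ on $\overline{\D}$. Choosing a partial sum $P$, which is a polynomial, with $\sup_{z\in\overline{\D}}|f_r(z)-P(z)|<\e/2$ and applying the displayed inequality to $g=f_r-P$ yields $\rho_{p,q}(f_r-P)<\e/2$. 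Combining the two estimates gives $\rho_{p,q}(f-P)\leq\rho_{p,q}(f-f_r)+\rho_{p,q}(f_r-P)<\e$, which proves that polynomials are dense.

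For separability, I would observe that the polynomials with coefficients in $\Q+i\Q$ form a countable set. Given any polynomial $P=\sum_{k=0}^{N} a_k z^k$ and $\delta>0$, picking $b_k\in\Q+i\Q$ with $\sum_{k=0}^{N}|a_k-b_k|<\delta$ gives $\sup_{z\in\overline{\D}}\bigl|P(z)-\sum_{k=0}^{N} b_k z^k\bigr|<\delta$ since $|z^k|\leq 1$, and hence $\rho_{p,q}\bigl(P-\sum_{k=0}^{N} b_k z^k\bigr)<\delta$ by the same inequality. Thus this countable family is dense among all polynomials, and by the first part dense in $RM(p,q)$, so $RM(p,q)$ is separable.

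The only genuine difficulty in the argument has already been isolated and handled in Proposition~\ref{p3}, namely the continuity of dilation in the $\rho_{p,q}$-norm, where the hypothesis $q<+\infty$ is essential through the dominated convergence step. Everything remaining here is routine, the single new ingredient being the norm comparison $\rho_{p,q}(g)\leq\sup_{\D}|g|$ that converts uniform estimates into estimates in the space.
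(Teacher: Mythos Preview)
Your proof is correct and follows essentially the same route as the paper's: approximate $f$ by its dilate $f_r$ via Proposition~\ref{p3}, then approximate $f_r$ by partial sums of its Taylor series using uniform convergence on $\overline{\D}$. You make explicit the inequality $\rho_{p,q}(g)\leq\sup_{\D}|g|$ and the countable dense set of rational-coefficient polynomials, both of which the paper leaves implicit, but the substance is the same.
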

\begin{proof}
	We will study the cases $1\leq p,q<\infty$ since it is well-known that polynomials are dense in Hardy spaces $H^q=RM(\infty,q)$ for $0<q<\infty$. Let $f\in RM(p,q)$. Let us fix $r<1$. The function $f_r$ is holomorphic on $\frac{1}{r}\D$. Since $\overline{\D}\subset  \frac{1}{r}\D$ with $r\in (0,1)$, the sequence of partial sums $\{P_{n}\}_n$ of the Taylor expansion of $f_r$ converges uniformly to $f_r$ in $\overline{\D}$. Therefore, polynomials $\{P_n\}_n$ converges in the topology of the $RM(p,q)$-norm to $f_r$ and together with Proposition~\ref{p3} we obtain that polynomials are dense in $RM(p,q)$.
\end{proof}

It is well-known that $H^\infty=RM(\infty,\infty)$ is a non-separable Banach space. 
In order to study the non-separability of  $RM(p,\infty)$, for $p<+\infty$, we introduce:
 
\begin{definition}
	Let $1\leq p< +\infty$. We define the subspace $RM(p,0)$ of $RM(p,\infty)$
	\begin{align*}
	RM(p,0):=\left\{ f\in \mathcal{H}(\D):\lim\limits_{\rho\rightarrow 1} \sup_{\theta} \left(\int_{\rho}^{1} |f(re^{i\theta})|^{p} dr\right)^{1/p}=0\right\}.
	\end{align*} 
\end{definition}

It can be proved that $RM(p,0)$ is a closed subspace of $RM(p,\infty)$, so that it is a Banach space. We will show later that $RM(p,\infty)\neq RM(p,0)$.

Now, we can provide an analogous to Proposition \ref{p3} for $q=\infty$:

\begin{proposition}\label{propp0unif}
	Let $1\leq p <+\infty$ and $f\in RM(p,\infty)$. Then $f_r\in RM(p,0)$. Moreover, $f\in RM(p,0)$ if and only if 
\begin{equation}\label{Eq:propp0unif}
\rho_{p,\infty}(f-f_{r})\rightarrow 0
\end{equation} when $r\rightarrow 1$.
\end{proposition}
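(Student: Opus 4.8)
The plan is to treat the two assertions separately, disposing first of the auxiliary claim that $f_r\in RM(p,0)$ and then of the equivalence, whose nontrivial half is the forward implication. For the first claim I would use that $f_r(z)=f(rz)$ is holomorphic on the disc $\frac{1}{r}\D$, which strictly contains $\overline{\D}$ when $r<1$; hence $f_r$ is bounded on $\overline{\D}$, say $|f_r(z)|\leq M_r$ for $|z|\leq 1$. Then for every $\theta$ and every $\rho\in(0,1)$,
$$
\int_{\rho}^{1}|f_r(se^{i\theta})|^{p}\,ds\leq M_r^{p}(1-\rho),
$$
so that $\sup_{\theta}\left(\int_{\rho}^{1}|f_r(se^{i\theta})|^{p}\,ds\right)^{1/p}\leq M_r(1-\rho)^{1/p}\to 0$ as $\rho\to 1$, which is exactly $f_r\in RM(p,0)$.

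For the equivalence, the backward implication is immediate: if $\rho_{p,\infty}(f-f_{r})\to 0$, then along any sequence $r_n\to 1$ the functions $f_{r_n}\in RM(p,0)$ converge to $f$ in the $\rho_{p,\infty}$-norm, and since $RM(p,0)$ is a closed subspace of $RM(p,\infty)$ we conclude $f\in RM(p,0)$.

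The heart of the matter is the forward implication, which I would prove by the same splitting used in Proposition~\ref{p3}, now with the extra demand of uniformity in $\theta$ that membership in $RM(p,0)$ is designed to supply. Given $\varepsilon>0$, I use $f\in RM(p,0)$ to choose $\rho_0<1$ with $\sup_{\theta}\left(\int_{\rho_0}^{1}|f(ue^{i\theta})|^{p}\,du\right)^{1/p}<\varepsilon$, and I fix any cutoff $\rho\in(\rho_0,1)$. I then split $\int_{0}^{1}|f(se^{i\theta})-f(rse^{i\theta})|^{p}\,ds=\int_{0}^{\rho}+\int_{\rho}^{1}$. On the tail $\int_{\rho}^{1}$, the triangle inequality in $L^{p}[\rho,1]$ bounds the $f$-part by $\varepsilon$, while the change of variables $u=rs$ turns the $f_r$-part into $\frac{1}{r}\int_{r\rho}^{1}|f(ue^{i\theta})|^{p}\,du$; once $r>1/2$ and $r$ is close enough to $1$ that $r\rho>\rho_0$, this is at most $2\varepsilon^{p}$, all bounds being uniform in $\theta$. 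On the interior $\int_{0}^{\rho}$, since $f$ is uniformly continuous on the compact disc $\{|z|\leq\rho\}$ and $|se^{i\theta}-rse^{i\theta}|=s(1-r)\leq\rho(1-r)\to 0$, the quantity $\sup_{s\in[0,\rho],\,\theta}|f(se^{i\theta})-f(rse^{i\theta})|$ tends to $0$ as $r\to 1$, so this part also tends to $0$ uniformly in $\theta$.

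Combining the two estimates and letting first $r\to 1$ (with $\rho,\rho_0$ fixed) gives $\limsup_{r\to 1}\rho_{p,\infty}(f-f_{r})\lesssim\varepsilon$, and since $\varepsilon$ is arbitrary this yields \eqref{Eq:propp0unif}. The main obstacle is precisely the tail term $\int_{\rho}^{1}|f(rse^{i\theta})|^{p}\,ds$: the natural substitution pushes its lower endpoint down to $r\rho<\rho$, so a naive choice of cutoff would leave the transformed integral starting below the $RM(p,0)$-threshold. The remedy is to place the cutoff $\rho$ strictly above $\rho_0$, so that for $r$ sufficiently close to $1$ the endpoint $r\rho$ still exceeds $\rho_0$ and the uniform smallness of the radial tail of $f$ can be invoked; everything else reduces, as in the $q<\infty$ case, to uniform continuity on a compact subdisc.
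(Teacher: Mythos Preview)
Your proof is correct and follows essentially the same approach as the paper: bounded dilations give $f_r\in RM(p,0)$, closedness of $RM(p,0)$ handles the backward implication, and for the forward implication you split at a cutoff $\rho>\rho_0$ so that the substitution $u=rs$ keeps the transformed lower endpoint $r\rho$ above the $RM(p,0)$-threshold $\rho_0$, while uniform continuity on $\{|z|\leq\rho\}$ controls the interior. The paper makes the specific choice $\rho=(\rho_0+1)/2$, but any $\rho\in(\rho_0,1)$ works, as you observe.
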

\begin{proof}
Assume that $f\in RM(p,\infty)$. Notice that, for $\rho<1$,
\begin{align*}
	\sup_{\theta} \left(\int_{\rho}^{1} |f_{r}(ue^{\theta i})|^{p}\ du\right)^{1/p}=\sup_{\theta} \left(\int_{r\rho}^{r} |f(ue^{\theta i})|^{p}\ \frac{du}{r}\right)^{1/p}\leq (1-\rho)\sup_{z\in \D} |f_{r}(z)|.
	\end{align*}
Thus
\begin{align*}
	\lim\limits_{\rho\rightarrow 1}\sup_{\theta} \left(\int_{\rho}^{1} |f_{r}(ue^{\theta i})|^{p}\ du\right)^{1/p}=0.
	\end{align*}
	This implies that $f_{r}\in RM(p,0)$. Since $RM(p,0)$ is closed in $RM(p,\infty)$, we get $f\in RM(p,0)$ if \eqref{Eq:propp0unif} holds.

Assume now that $f\in RM(p,0)$, we have to see that $\rho_{p,\infty}(f-f_r)\rightarrow 0$. Fix $\varepsilon>0$. Then there is $\rho_{0}<1$ such that
\begin{equation}\label{Eq:propp0unif2}
\sup_{\theta} \left(\int_{\rho}^{1} |f(se^{i\theta})|^{p}\,  ds\right)^{1/p}\leq \varepsilon
\end{equation}
for all $\rho_{0}\leq \rho<1$.
Take $\rho= (\rho_{0}+1)/2$ and $r<1$ such that $r\rho>\rho_{0}$. A compactness argument shows that
$$
\lim_{r\to 1} \sup_{\theta}\int_{0}^{\rho} |f(se^{\theta i})-f_{r}(s e^{\theta i})|^{p}\ ds=0.
$$
 Bearing in mind \eqref{Eq:propp0unif2}, for each $\theta$, we have
\begin{equation*}
\begin{split}
 \left(\int_{\rho}^{1} |f(se^{\theta i})-f_{r}(s e^{\theta i})|^{p}\,  ds\right)^{1/p}&\leq \left(\int_{\rho}^{1} |f(se^{\theta i})|^{p}\,  ds\right)^{1/p}+ \left(\int_{\rho}^{1} |f_{r}(s e^{\theta i})|^{p}\,  ds\right)^{1/p}\\
 &\leq \varepsilon+ \frac{1}{r}\left(\int_{r\rho}^{r} |f(s e^{\theta i})|^{p}\,  ds\right)^{1/p}\leq \varepsilon+\frac{1}{r}\varepsilon.
 \end{split}
\end{equation*}
This implies that $\limsup_{r\to 1}\rho_{p,\infty}(f-f_{r})\leq 2\varepsilon$. Thus $\lim_{r\to 1}\rho_{p,\infty}(f-f_{r})=0$.
\end{proof}

A density argument similar to the one used in Proposition \ref{desnsity-polynomials} shows that:
\begin{corollary} \label{Cor:density-pol-RM(p,0)} Let $1\leq p <+\infty$.
	Polynomials are dense in $RM(p,0)$.  In particular, $RM(p,0)$ is a separable space.
\end{corollary}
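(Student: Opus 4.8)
The plan is to mirror the two-step approximation used in Proposition~\ref{desnsity-polynomials}, but to replace the ingredient that is only valid for $q<+\infty$ there (namely Proposition~\ref{p3}) with its $q=\infty$ counterpart, Proposition~\ref{propp0unif}, which is tailored precisely to the subspace $RM(p,0)$. Fix $f\in RM(p,0)$ and $\varepsilon>0$. First I would invoke Proposition~\ref{propp0unif}: since $f\in RM(p,0)$, we have $\rho_{p,\infty}(f-f_{r})\to 0$ as $r\to 1^{-}$, so one can choose $r<1$ with $\rho_{p,\infty}(f-f_{r})<\varepsilon/2$.

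Next, for this fixed $r$ the dilate $f_{r}$ is holomorphic on $\tfrac{1}{r}\D\supset\overline{\D}$, so the partial sums $P_{n}$ of its Taylor expansion converge to $f_{r}$ uniformly on $\overline{\D}$, i.e. $\|P_{n}-f_{r}\|_{H^{\infty}}\to 0$. The elementary observation that drives the argument is that $\rho_{p,\infty}$ is dominated by the supremum norm: for any bounded holomorphic $g$,
\[
\rho_{p,\infty}(g)=\esssup_{t}\left(\int_{0}^{1}|g(re^{it})|^{p}\,dr\right)^{1/p}\le \|g\|_{H^{\infty}},
\]
since the inner integral of a quantity bounded by $\|g\|_{H^{\infty}}^{p}$ over $[0,1]$ is at most $\|g\|_{H^{\infty}}^{p}$. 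Hence $\rho_{p,\infty}(P_{n}-f_{r})\le\|P_{n}-f_{r}\|_{H^{\infty}}\to 0$, and one can pick $n$ with $\rho_{p,\infty}(P_{n}-f_{r})<\varepsilon/2$. Each $P_{n}$ does belong to $RM(p,0)$, as a polynomial is bounded and $\sup_{\theta}\big(\int_{\rho}^{1}|P_{n}(re^{i\theta})|^{p}\,dr\big)^{1/p}\le\|P_{n}\|_{H^{\infty}}(1-\rho)^{1/p}\to 0$. The triangle inequality then gives $\rho_{p,\infty}(f-P_{n})<\varepsilon$, which proves density of the polynomials. Separability follows at once: polynomials with coefficients in $\Q+i\Q$ form a countable set, dense because any $P_{n}$ is approximated in the $H^{\infty}$-norm (hence, by the displayed bound, in $\rho_{p,\infty}$) by such rational-coefficient polynomials.

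I do not expect a genuinely hard analytic step here, since all of the real work has been front-loaded into Proposition~\ref{propp0unif}. The only point requiring care is conceptual rather than computational: one must use the $RM(p,0)$-specific convergence $\rho_{p,\infty}(f-f_{r})\to 0$ supplied by Proposition~\ref{propp0unif}, which genuinely fails on all of $RM(p,\infty)$ — consistent with the non-separability of $RM(p,\infty)$ established later via copies of $\ell^{\infty}$. Thus the restriction to the closed subspace $RM(p,0)$ is exactly what makes the dilation-plus-Taylor scheme go through.
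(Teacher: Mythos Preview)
Your proof is correct and follows exactly the approach the paper intends: the paper merely says that a density argument similar to the one used in Proposition~\ref{desnsity-polynomials} (with Proposition~\ref{propp0unif} playing the role of Proposition~\ref{p3}) yields the result, and that is precisely what you carry out.
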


\begin{corollary}
	Let $1\leq p<+\infty$. If $z\in \D$, then \begin{align*}
	\|\delta_{z}\|_{(RM(p,0))^{\ast}}=\|\delta_{z}\|_{(RM(p,\infty))^{\ast}} \quad {\textrm and} \quad 
	\|\delta'_{z}\|_{(RM(p,0))^{\ast}}= \|\delta'_{z}\|_{(RM(p,\infty))^{\ast}}. 
	\end{align*}
	In particular, 
	\begin{align*}
	\|\delta'_{z}\|_{(RM(p,0))^{\ast}}\asymp \frac{\|\delta_{z}\|_{(RM(p,0))^{\ast}}}{1-|z|^2}.
	\end{align*}
\end{corollary}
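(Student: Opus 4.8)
The plan is to exploit that $RM(p,0)$ is a closed subspace of $RM(p,\infty)$. One inequality is then free: for any bounded functional, restricting to a subspace can only decrease the operator norm, since the unit ball of $RM(p,0)$ sits inside that of $RM(p,\infty)$. Hence
\[
\|\delta_{z}\|_{(RM(p,0))^{\ast}}\leq\|\delta_{z}\|_{(RM(p,\infty))^{\ast}},
\qquad
\|\delta'_{z}\|_{(RM(p,0))^{\ast}}\leq\|\delta'_{z}\|_{(RM(p,\infty))^{\ast}},
\]
and both functionals are well defined and continuous on $RM(p,0)$ because they already are on the larger space (Propositions~\ref{estimevfunc} and~\ref{estiderfunc}). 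So the whole content lies in the reverse inequalities.

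For these I would run a dilation/approximation argument. Fix $z\in\D$ and take an arbitrary $f\in RM(p,\infty)$ with $\rho_{p,\infty}(f)\leq 1$. By Proposition~\ref{propp0unif} the dilate $f_{r}$ belongs to $RM(p,0)$ for every $r<1$. The key quantitative input is the norm estimate already computed inside the proof of Proposition~\ref{p3}, namely $R_{p}(\theta,f_{r})^{p}=\int_{0}^{r}|f(ue^{i\theta})|^{p}\,\frac{du}{r}\leq \frac{1}{r}R_{p}(\theta,f)^{p}$, which upon taking the supremum in $\theta$ gives
\[
\rho_{p,\infty}(f_{r})\leq r^{-1/p}\,\rho_{p,\infty}(f)\leq r^{-1/p}.
\]
Therefore $f_{r}/\,r^{-1/p}$ lies in the closed unit ball of $RM(p,0)$, and testing $\delta_{z}$ and $\delta'_{z}$ against it yields
\[
|f_{r}(z)|\leq r^{-1/p}\,\|\delta_{z}\|_{(RM(p,0))^{\ast}},
\qquad
|(f_{r})'(z)|\leq r^{-1/p}\,\|\delta'_{z}\|_{(RM(p,0))^{\ast}}.
\]

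Now I would let $r\to 1^{-}$. Since $f$ is holomorphic, $f_{r}(z)=f(rz)\to f(z)$ and $(f_{r})'(z)=r\,f'(rz)\to f'(z)$, while $r^{-1/p}\to 1$. Passing to the limit gives $|f(z)|\leq\|\delta_{z}\|_{(RM(p,0))^{\ast}}$ and $|f'(z)|\leq\|\delta'_{z}\|_{(RM(p,0))^{\ast}}$; taking the supremum over all $f$ with $\rho_{p,\infty}(f)\leq 1$ produces the two reverse inequalities, hence the claimed equalities. The displayed ``in particular'' statement then follows immediately from Corollary~\ref{evderestim} applied with $q=\infty$, combined with the two equalities just established. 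There is no serious obstacle here; the only point requiring care is that the dilation does not exactly preserve the $\rho_{p,\infty}$-norm but inflates it by the harmless factor $r^{-1/p}$, which tends to $1$, so the limit argument still delivers a sharp bound.
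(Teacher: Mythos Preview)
Your proof is correct and follows essentially the same approach as the paper: both use that $f_{r}\in RM(p,0)$ with $\rho_{p,\infty}(f_{r})$ controlled by $\rho_{p,\infty}(f)$ up to a factor tending to $1$, then pass to the limit $r\to 1^{-}$ to recover the reverse inequalities. The only cosmetic differences are that you use the sharper constant $r^{-1/p}$ (the paper uses $r^{-1}$) and argue directly with the supremum over the unit ball rather than via an $\varepsilon$-argument.
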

\begin{proof}
Since $RM(p,0)\subset RM(p,\infty)$,  we have that  $\|\delta_{z}\|_{(RM(p,0))^{\ast}}\leq \|\delta_{z}\|_{(RM(p,\infty))^{\ast}}$ and $\|\delta'_{z}\|_{(RM(p,0))^{\ast}}\leq \|\delta'_{z}\|_{(RM(p,\infty))^{\ast}}$.
	
	Let us see that $\|\delta_{z}\|_{(RM(p,0))^{\ast}}\geq \|\delta_{z}\|_{(RM(p,\infty))^{\ast}}$ and $\|\delta'_{z}\|_{(RM(p,0))^{\ast}}\geq \|\delta'_{z}\|_{(RM(p,\infty))^{\ast}}$.
	If $f\in RM(p,\infty)$ with $\rho_{p,\infty}(f)=1$, by Proposition \ref{propp0unif},  $f_r\in RM(p,0)$ and
	\begin{align*}
	\rho_{p,\infty}(f_r)=\sup_{\theta} \left(\int_{0}^{1} |f(ru e^{i\theta})|^{p}\ du\right)^{1/p}=\sup_{\theta} \left(\int_{0}^{r} |f(u e^{i\theta})|^{p}\ \frac{du}{r}\right)^{1/p}\leq \frac{1}{r}\rho_{p,\infty}(f)
	\end{align*}
	Moreover, it is easy to see that $\delta_{z}(f_r)\rightarrow \delta_{z}(f)$ and $\delta'_{z}(f_r)\rightarrow \delta'_{z}(f)$, when $r\rightarrow 1^{-}$ for a fixed $z\in\D$. 
	
	Fixing $\varepsilon>0$, there exists $f\in RM(p,\infty)$ with $\rho_{p,q}(f)=1$ such that $$|\delta_{z}(f)|\geq \|\delta_{z}\|_{(RM(p,\infty))^{\ast}}-\varepsilon.$$
	In addition, we know that
	\begin{align*}
	|\delta_{z} (f)| &=|f(z)|=\lim\limits_{r\rightarrow 1} |f(rz)|\\
	&=\lim\limits_{r\rightarrow 1} \frac{|\delta_{z}(f_r)|}{\rho_{p,\infty}(f_{r})} {\rho_{p,\infty}(f_{r})}\leq \lim\limits_{r\rightarrow 1}\frac{\|\delta_{z}\|_{(RM(p,0))^{\ast}}}{r}=\|\delta_{z}\|_{(RM(p,0))^{\ast}}.
	\end{align*}
Therefore, it satisfies, for all $\varepsilon>0$,
	\begin{align*}
	\|\delta_{z}\|_{(RM(p,\infty))^{\ast}}\leq \|\delta_{z}\|_{(RM(p,0))^{\ast}}+\varepsilon,
	\end{align*}
	that is, $\|\delta_{z}\|_{(RM(p,\infty))^{\ast}}\leq\|\delta_{z}\|_{(RM(p,0))^{\ast}}$. The proof for $\delta'_{z}$ can be done in a similar way.
\end{proof}

The non-separability of $RM(p,\infty)$ is an easy consequence of the following much deepest result.
\begin{theorem}\label{non-separability}
	Let $1\leq p< \infty$. Then $RM(p,\infty)$ has a subspace isomorphic to $\ell^{\infty}$. Namely, there is a sequence $\{f_k\}$ of functions in $RM(p,0)$
	such that for every $\{\alpha_k\}\in \ell^\infty$ the series $\sum_{k=0}^{\infty}\alpha_{k}f_{k}$ converges uniformly on compact subsets of $\D$ and the operator  
	\begin{align*}
	T:\ell^\infty\rightarrow RM(p,\infty) \quad \mbox{ defined by } \quad T(\{\alpha_{k}\}):=\sum_{k=0}^{\infty}\alpha_{k}f_{k}
	\end{align*}
	establishes an isomorphism between $\ell^\infty$ and $T(\ell^\infty)$.  Moreover, $T(\{\alpha_{k}\})\in RM(p,0)$ if and only if $\{\alpha_k\}\in c_{0}$. In particular, $\sum_{k=0}^\infty f_k\in RM(p,\infty)\setminus RM(p,0)$.
\end{theorem}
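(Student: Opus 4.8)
\emph{The plan} is to exhibit the $\ell^\infty$-copy through a sequence of angularly localized bumps: bounded holomorphic functions---hence automatically members of $RM(p,0)$---each concentrated near a distinct boundary point and with almost disjoint radial profiles. Fix an exponent $s>0$ with $sp$ large, choose distinct points $\zeta_k=e^{i\theta_k}$ accumulating at a single boundary point, scales $\delta_k\downarrow 0$, put $w_k=(1-\delta_k)\zeta_k$, and set
\[
f_k(z)=\frac{\delta_k^{\,s}}{(1-\overline{w_k}z)^{\,s+1/p}},
\]
the principal branch being well defined since $\mathrm{Re}(1-\overline{w_k}z)>0$ on $\D$. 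A direct computation in the spirit of Example~\ref{ex1} shows that the radial profile $\Phi_k(\theta):=\int_0^1|f_k(\rho e^{i\theta})|^p\,d\rho$ is a bump of height $\asymp 1$ and width $\asymp\delta_k$, precisely $\Phi_k(\theta)\asymp\min\{1,(\delta_k/|\theta-\theta_k|)^{sp}\}$; in particular $\rho_{p,\infty}(f_k)\asymp 1$. Since each $w_k$ lies strictly inside $\D$, every $f_k$ is bounded, so $\int_\rho^1|f_k|^p\,dr\le\|f_k\|_\infty^p(1-\rho)\to 0$ uniformly in $\theta$ and $f_k\in RM(p,0)$.

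The decisive step is to choose the geometric data $(\theta_k,\delta_k)$---with $\{\theta_k\}$ lacunary-type and $\{\delta_k\}$ shrinking much faster than the gaps $|\theta_j-\theta_k|$---so that three conditions hold simultaneously: (i) the overlap function $\theta\mapsto\sum_k\min\{1,(\delta_k/|\theta-\theta_k|)^{s}\}$ is bounded by a constant uniformly in $\theta$; (ii) the interaction coefficients $\eta_k:=\sum_{j\neq k}(\delta_j/|\theta_j-\theta_k|)^{s}$ satisfy $\sup_k\eta_k\le\varepsilon_0$ for a prescribed small $\varepsilon_0$ and moreover $\eta_k\to 0$ as $k\to\infty$; and (iii) $\sum_k\delta_k^{\,s}<\infty$. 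All three can be met by choosing the $\delta_k$ recursively and small enough. Condition (iii) yields locally uniform convergence of $\sum_k\alpha_kf_k$ for $\{\alpha_k\}\in\ell^\infty$, since on a compact $K\subset\D$ one has $|1-\overline{w_k}z|\ge 1-|z|\ge d_K>0$ with $d_K:=1-\max_{z\in K}|z|$, hence $|f_k(z)|\le\delta_k^{\,s}/d_K^{\,s+1/p}\lesssim_K\delta_k^{\,s}$.

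Granting these estimates, the two-sided norm bound is routine. For the upper bound, Minkowski's inequality in $L^p[0,1]$ gives for each $\theta$ that $\big(\int_0^1|\sum_k\alpha_kf_k(\rho e^{i\theta})|^p\,d\rho\big)^{1/p}\le\sum_k|\alpha_k|\,\Phi_k(\theta)^{1/p}\lesssim\|\{\alpha_k\}\|_\infty$ by (i), whence $\rho_{p,\infty}(T(\{\alpha_k\}))\lesssim\|\{\alpha_k\}\|_\infty$. For the lower bound, testing at $\theta=\theta_k$ and applying the reverse triangle inequality in $L^p$, the main term contributes $\asymp|\alpha_k|$ while the cross terms are at most $\varepsilon_0\|\{\alpha_k\}\|_\infty$ by (ii); choosing $k$ with $|\alpha_k|\ge\tfrac12\|\{\alpha_k\}\|_\infty$ and $\varepsilon_0$ small yields $\rho_{p,\infty}(T(\{\alpha_k\}))\gtrsim\|\{\alpha_k\}\|_\infty$. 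Hence $T$ is an isomorphism onto its image.

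For the $c_0$ characterization, if $\{\alpha_k\}\in c_0$ then the tails $\sum_{k\ge N}\alpha_kf_k$ have $\rho_{p,\infty}\lesssim\sup_{k\ge N}|\alpha_k|\to 0$ by the upper bound, so $T(\{\alpha_k\})$ is a norm-limit of finite sums of bounded functions and therefore lies in the closed subspace $RM(p,0)$. Conversely, suppose $\{\alpha_k\}\notin c_0$ and pick a subsequence with $|\alpha_{k_j}|\ge\delta>0$. Given $\rho<1$, choose $j$ large so that $\delta_{k_j}<1-\rho$ and $\eta_{k_j}\|\{\alpha_k\}\|_\infty<\delta/2$ (possible since $\eta_k\to 0$ and $\|\{\alpha_k\}\|_\infty<\infty$); then the bulk of the bump of $f_{k_j}$ lies in $\{|z|>\rho\}$, so $\int_\rho^1|f_{k_j}(\rho' e^{i\theta_{k_j}})|^p\,d\rho'\asymp 1$, and the reverse triangle inequality gives $\sup_\theta\int_\rho^1|T(\{\alpha_k\})(\rho'e^{i\theta})|^p\,d\rho'\gtrsim\delta$; letting $\rho\to 1$ shows $T(\{\alpha_k\})\notin RM(p,0)$, and $\alpha_k\equiv 1$ gives $\sum_k f_k\in RM(p,\infty)\setminus RM(p,0)$. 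The main obstacle is the geometric step: producing one configuration $(\theta_k,\delta_k)$ for which conditions (i)--(iii) hold together and uniformly, in tandem with the radial-profile estimate across the critical regime $|\theta-\theta_k|\approx\delta_k$; once these uniform bounds are secured, every remaining estimate reduces to the triangle inequality in $L^p[0,1]$.
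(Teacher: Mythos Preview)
Your overall strategy matches the paper's: build angularly separated holomorphic bumps, each peaking on its own radius and negligible on the others, so that the coefficients can be read off. The paper takes $f_k(z)=\varepsilon_k/(ze^{-i\theta_k}-a_k)^2$ with $a_k>1$ and arranges the disks $D(a_ke^{i\theta_k},r_k)$ to be pairwise disjoint; this yields a \emph{pointwise} summable bound $|f_j(z)|\le\varepsilon_j/r_j^2$ off the $j$-th disk. You instead use kernel-type bumps and control interactions only through the $L^p$ quantities $\eta_k$. The upper and lower norm bounds, the uniform convergence on compacta, and the implication $\alpha\in c_0\Rightarrow T\alpha\in RM(p,0)$ are all fine in your sketch.

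There is, however, a genuine gap in the converse implication $T\alpha\in RM(p,0)\Rightarrow\alpha\in c_0$. You assert that the $\delta_k$ can be chosen recursively so that $\eta_k=\sum_{j\ne k}(\delta_j/|\theta_j-\theta_k|)^s\to 0$, and you use this to make the cross term at $\theta_{k_j}$ small. But $\eta_k\to 0$ is impossible: for any fixed index $j$ the single summand $(\delta_j/|\theta_j-\theta_k|)^s$ tends to the positive number $(\delta_j/|\theta_j-\theta_\infty|)^s$ as $k\to\infty$ (where $\theta_\infty$ is your declared accumulation point), so $\liminf_k\eta_k>0$ regardless of how small the later $\delta_k$ are. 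With only $\sup_k\eta_k\le\varepsilon_0$ your reverse triangle inequality on the tail gives $\bigl(\int_\rho^1|T\alpha|^p\,dr\bigr)^{1/p}\gtrsim|\alpha_{k_j}|-C\varepsilon_0\|\alpha\|_\infty$, and since the ratio $\delta/\|\alpha\|_\infty$ can be arbitrarily small this does not force $\alpha\in c_0$. The paper avoids the issue precisely because its pointwise disjointness makes the cross contribution on the tail $[a_n-r_n,1]$ pick up an extra factor $(1-a_n+r_n)^{1/p}\to 0$. In your framework a repair is possible---for instance, subtract the first $N$ bumps (a finite sum lying in $RM(p,0)$), apply your tail estimate to the remainder to get $\limsup_k|\alpha_k|\le (C\varepsilon_0/c)\sup_{k\ge N}|\alpha_k|$, and let $N\to\infty$ with $\varepsilon_0$ chosen so that $C\varepsilon_0/c<1$---but this bootstrap is not what the sketch does, and the step as written fails.
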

\begin{proof}
	For each $k=0,1,2,\dots$, take $r_k=2^{-(k+1)}$, $a_k=1+14^{-(k+1)}$, and 
	\begin{align*}
	\varepsilon_k=\frac{\sqrt[p]{2p-1}}{2^{(k+1)(2-1/p)}} \frac{1}{\sqrt[p]{7^{(k+1)(2p-1)}-1}}.
	\end{align*}
	It is clear that $	\sum_{k=0}^{\infty}r_k=1$,
	\begin{align*}
	\sum_{k=0}^{\infty} \frac{\varepsilon_{k}}{r_{k}^2}&\leq \sqrt[p]{2p-1} \sum_{k=0}^{\infty} \frac{2^{(k+1)/p}}{\sqrt[p]{7^{(k+1)(2p-1)}-1}}\leq \frac{7 \sqrt[p]{2p-1} }{6} \sum_{k=0}^{\infty} \frac{2^{k+1}}{\sqrt[p]{7^{(k+1)(2p-1)}}}\\
	&\leq \frac{7 \sqrt[p]{2p-1} }{6} \sum_{k=0}^{\infty} \left(\frac{2}{7}\right)^{k+1}=\frac{7}{15} \sqrt[p]{2p-1}<1,
	\end{align*}
	and	\begin{align*}
	\int^{1}_{a_k-r_k} \frac{\varepsilon_k^p}{|a_k-r|^{2p}}\ dr=\frac{\varepsilon_{k}^{p}}{{2p-1}}\left(\frac{1}{(a_k-1)^{2p-1}}-\frac{1}{r^{2p-1}_k}\right)=1.
	\end{align*}
	In addition we can find a sequence $\{\theta_k\}$ such that the disks $D(a_{k}e^{\theta_{k}i},r_k)$ are pairwise disjoint. For that, we consider $$\theta_{k}=\arcsin(r_k)+2\sum_{n=0}^{k-1}\arcsin\left(r_n\right).$$
	It is easy to see that $D(a_{k} e^{i\theta_{k}},r_{k})\cap D(a_{k+1} e^{i\theta_{k+1}},r_{k+1})=\emptyset$, because
	\begin{align*}
	\theta_{k+1}-\theta_{k}=\arcsin\left(r_{k+1}\right)+\arcsin\left(r_{k}\right).
	\end{align*}
	Moreover, it is also obtained that 
	\begin{align*}
	|\theta_{k}|\leq \frac{\pi}{2} r_k+\pi \sum_{n=0}^{k-1} r_n< \pi \sum_{n=0}^{k} r_{n} <\pi. 
	\end{align*}
	Finally, take  $f_{k}(z):=\frac{\varepsilon_k}{(ze^{-i\theta_{k}}-a_{k})^2}$, $z\in \C\setminus \{a_k e^{i\theta_k}\}$. Since $f_k$ is bounded in $\D$, it belongs to $RM(p,0)$. 
	In addition, we have that $|f_{k}(z)|\leq \frac{\varepsilon_k}{r_{k}^2}$ if $z\notin D(a_k e^{i\theta_{k}  },r_k)$. 
	
	Since $\sum_{k=0}^{\infty}\frac{\varepsilon_k}{r_{k}^2}<\infty$, it is easy to see that, given a bounded sequence $\{\alpha_k\}$, the sequence  $\{\sum_{n=0}^{k} \alpha_nf_{n}(z)\}$ converges uniformly on compacta of $\D$ to $T(\{\alpha_{k}\}):=\sum_{k=0}^{\infty}\alpha_{k}f_{k}$, so that $T(\{\alpha_{k}\})$ is holomorphic in $\D$. 
	
	By construction, every radius $L_\theta=\{te^{i\theta}\ :\ t\in [0,1)\}$ only cuts one of the open balls.
	Let us see that $f=T(\{\alpha_{k}\})\in RM(p,\infty)$. 
	On the one hand, if   $\theta\in [0,2\pi]$ is such that there is $k_0$ with $e^{i\theta}\in D(a_{k_0}e^{i\theta_{k_0}},r_{k_0})$. Then 
	\begin{align*}
	\left(\int_{0}^{1}|f(re^{i\theta})|^{p}\ dr\right)^{1/p}&  \leq |\alpha_{k_0}|\left(\int_{0}^{1} |f_{k_0}(re^{\theta i})|^{p}\ dr\right)^{1/p}+\sum_{j=0} |\alpha_{k}|\frac{\varepsilon_j}{r_j^2}\\
	&  \leq ||\{\alpha_k\}||_{\ell^\infty} \left( \left(\int_{0}^{1} |f_{k_0}(re^{\theta_{k_0} i})|^{p}\ dr\right)^{1/p}+1\right) \\
	&\leq ||\{\alpha_k\}||_{\ell^\infty} \left(  \left(1+(a_{k_0}-r_{k_0})\frac{\varepsilon_{k_0}^{p}}{r_{k_0}^{2p}}\right)^{1/p}+1\right) \leq 3||\{\alpha_k\}||_{\ell^\infty}.  
	\end{align*}
	On the other hand, if  $\theta\in [0,2\pi]$ is such that $e^{i\theta}\notin  D(a_{k}e^{i\theta_{k}},r_{k})$ for all $k$, then 
	\begin{align*}
	\left(\int_{0}^{1} |f(re^{i\theta})|^{p}\ dr\right)^{1/p}\leq  ||\{\alpha_k\}||_{\ell^\infty}\sum_{k=0}^{\infty}\frac{\varepsilon_{k}}{r_{k}^2}\leq  ||\{\alpha_k\}||_{\ell^\infty}.
	\end{align*}
	That is $f=T(\{\alpha_{k}\})\in RM(p,\infty)$ and, in particular, $T:\ell^\infty\rightarrow RM(p,\infty)$ is bounded.

	Let us see that $T$ is open so that it establishes an isomorphism between $\ell^\infty$ and $T(\ell^\infty)$.  For each $n$, it follows 
	\begin{align*}
	&\rho_{p,\infty}(T(\{\alpha_k\}))\geq \left(\int_{0}^{1}|T(\{\alpha_k\})(re^{i\theta_{n}})|^{p}\ dr\right)^{1/p}\geq \left(\int_{a_n-r_n}^{1}|T(\{\alpha_k\})(re^{i\theta_{n}})|^{p}\ dr\right)^{1/p}\\
	&\geq |\alpha_{n}|-\|\{\alpha_{k}\}\|_{\ell^\infty} \left(\sum_{j=0}^{\infty}\frac{\varepsilon_{j}}{r_{j}^2}\right)(1-a_n+r_n)\geq |\alpha_{n}|-\|\{\alpha_{k}\}\|_{\ell^\infty} \left(\sum_{j=0}^{\infty}\frac{\varepsilon_{j}}{r_{j}^2}\right).
	\end{align*}
	Therefore, $|\alpha_{n}|\leq \|\{\alpha_{k}\}\|_{\ell^\infty} \left(\sum_{j=0}^{\infty}\frac{\varepsilon_{j}}{r_{j}^2}\right)+\rho_{p,\infty}(T(\{\alpha_k\}))$  and taking supremum in $n$ we obtain $$\rho_{p,\infty}(T(\{\alpha_k\})) \geq  \left(1-\sum_{j=0}^{\infty}\frac{\varepsilon_{j}}{r_{j}^2}\right)  \|\{\alpha_{k}\}\|_{\ell^\infty}.$$ Since $\sum_{j=0}^{\infty}\frac{\varepsilon_{j}}{r_{j}^2}<1$, we get that $T$ establishes an isomorphism between $\ell^\infty$ and $T(\ell^\infty)$.

	To end the proof, we show that $T(\{\alpha_{k}\})\in RM(p,0)$ if and only if $\{\alpha_k\}\in c_{0}$.
	
	Let $T\left(\{\alpha_{k}\}\right)\in RM(p,0)$. Then
	\begin{align*}
	& \sup_{\theta}\left(\int_{a_n-r_n}^{1}|T(\{\alpha_k\})(re^{i\theta})|^{p}\ dr\right)^{1/p}\geq \left(\int_{a_n-r_n}^{1}|T(\{\alpha_k\})(re^{i\theta_{n}})|^{p}\ dr\right)^{1/p}\\
	&\geq |\alpha_{n}|-\|\{\alpha_{k}\}\|_{\ell^\infty} \left(\sum_{j=0}^{\infty}\frac{\varepsilon_{j}}{r_{j}^2}\right)(1-a_n+r_n).
	\end{align*}
	Since $1-a_n+r_n\rightarrow 0$ and $T\left(\{\alpha_{k}\}\right)\in RM(p,0)$, it follows that 
	$\{\alpha_k\}\in c_{0}$.

	Conversely,  let $\alpha=\{\alpha_k\}_k\in c_0$ and let us prove that $T(\alpha)\in RM(p,0)$. Since $f_k\in RM(p,0)$, then $\sum_{k=1}^{n} \alpha_k f_{k}\in RM(p,0)$ for all $n\in \N$. Moreover, $\sum_{k=1}^{n} \alpha_k f_{k}\rightarrow T(\alpha)$ because $T$ is continuous and $(\alpha_1,\dots,\alpha_n,0,0,\dots )\rightarrow \alpha$ in $\ell^\infty$. Finally, $T(\alpha)\in RM(p,0)$ since $RM(p,0)$ is a closed subspace of $RM(p,\infty)$.
\end{proof}

\section{Containment relationships}

\subsection{Inclusions}
In this section we will give a characterization for the containment relationships between our spaces. To do this, we recall the notion of the Marcinkiewicz spaces $L^{p,\infty}$, also called the weak $L^{p}$ spaces.

\begin{definition}
	Let $1\leq p<\infty$. We define the weak $L^{p}$ space of measurable functions
	\begin{align*}
	L^{p,\infty}=\left\{f:X\mapsto \C\ \mbox{measurable}: \|f\|_{p,\infty}:=\sup_{t>0}t\lambda_{f}^{1/p}(t)<\infty\right\}
	\end{align*}
	where 
	\begin{align*}
	\lambda_{f}(t)=\mu\left(\left\{x\in X\ :|f(x)|>t\right\}\right).
	\end{align*}
\end{definition}

\begin{lemma}\cite[Proposition 1.1.14, p. 8]{grafakos_classical_fourier_analysis} \label{weakinterpol}
	Let $f\in L^{p_0,\infty}\cap L^{p_1,\infty}$ with $p_0\neq p_1$. Then $f\in L^p$ for $\frac{1}{p}=\frac{1-\lambda}{p_0}+\frac{\lambda}{p_1}$, $\lambda\in (0,1)$. Moreover, there exists a constant $C(p_0,p_1,\lambda)>0$ such that
	\begin{align*}
	\|f\|_{p}\leq C(p_0,p_1,\lambda) \|f\|_{p_0,\infty}^{1-\lambda}\|f\|_{p_1,\infty}^{\lambda},
	\end{align*} 
	for $\lambda\in (0,1)$.
\end{lemma}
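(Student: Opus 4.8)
The plan is to prove this by the classical Marcinkiewicz-type layer-cake argument, rather than relying solely on the citation. Assume without loss of generality that $p_0<p_1$; the interpolation relation $\frac{1}{p}=\frac{1-\lambda}{p_0}+\frac{\lambda}{p_1}$ with $\lambda\in(0,1)$ then forces $p_0<p<p_1$, which is exactly what makes the integrals below converge. Write $B_0=\|f\|_{p_0,\infty}$ and $B_1=\|f\|_{p_1,\infty}$, and record the two pointwise bounds on the distribution function coming from membership in the weak spaces:
\[
\lambda_f(t)\leq \frac{B_0^{p_0}}{t^{p_0}}\qquad\text{and}\qquad \lambda_f(t)\leq \frac{B_1^{p_1}}{t^{p_1}},\qquad t>0.
\]

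Next I would express the $L^p$ norm through the distribution function by the Cavalieri principle,
\[
\|f\|_p^p=p\int_0^\infty t^{p-1}\lambda_f(t)\,dt,
\]
and split the integral at a free threshold $A>0$. On $(0,A)$ I use the first estimate (the one with the smaller exponent $p_0$, which is integrable near $0$ precisely because $p>p_0$), and on $(A,\infty)$ the second estimate (with the larger exponent $p_1$, integrable near $\infty$ because $p<p_1$). Carrying out the two elementary power integrals yields
\[
\|f\|_p^p\leq \frac{p}{\,p-p_0\,}B_0^{p_0}A^{p-p_0}+\frac{p}{\,p_1-p\,}B_1^{p_1}A^{p-p_1}.
\]

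The final step is to optimize in $A$. Choosing $A=\bigl(B_1^{p_1}/B_0^{p_0}\bigr)^{1/(p_1-p_0)}$ balances the two terms, and a short computation of exponents gives
\[
\|f\|_p^p\leq C(p_0,p_1,\lambda)\, B_0^{\,p_0(p_1-p)/(p_1-p_0)}\,B_1^{\,p_1(p-p_0)/(p_1-p_0)}.
\]
The only genuine work is the bookkeeping that identifies these exponents with the ones in the statement: using $\frac{1}{p}=\frac{1-\lambda}{p_0}+\frac{\lambda}{p_1}$ one verifies that $\frac{p_0(p_1-p)}{p_1-p_0}=p(1-\lambda)$ and $\frac{p_1(p-p_0)}{p_1-p_0}=p\lambda$, after which taking $p$-th roots produces exactly $\|f\|_p\leq C\,B_0^{1-\lambda}B_1^{\lambda}$. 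I expect this exponent verification, rather than any analytic difficulty, to be the only place demanding care; the splitting and the convergence of the two integrals are immediate once $p_0<p<p_1$ is in hand.
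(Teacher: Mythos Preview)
Your argument is correct and is precisely the standard proof found in Grafakos, which is the reference the paper cites; the paper itself does not supply any proof of this lemma, treating it as a quoted result. The only minor omission is the trivial case $B_0=0$ or $B_1=0$ (where $f=0$ a.e.\ and the choice of $A$ degenerates), but this is harmless.
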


\begin{theorem}\label{thm:inclusion}
	Let $1\leq p_0, q_0\leq \infty$ and set 
	\begin{align*}
	A(p_0,q_0)=\left\{ (p,q)\in (0,+\infty]\times(0,+\infty] : \frac{1}{p}+\frac{1}{q}\geq \frac{1}{p_0}+\frac{1}{q_0},\quad p_0\geq p\right\}.
	\end{align*}
	\begin{enumerate}
		\item If $p_0,q_0<+\infty$, then $RM(p_0,q_0)\subset RM(p,q)$ if and only if $(p,q)\in A(p_0,q_0)\setminus \left\{\left(\beta,\infty\right)\right\}$, where $\beta=\frac{p_0q_0}{p_0+q_0}$.
		\item  If either $p_0$ or $q_0$ are $+\infty$, then $RM(p_0,q_0)\subset RM(p,q)$  if and only if $(p,q)\in A(p_0,q_0)$.
	\end{enumerate}
\end{theorem}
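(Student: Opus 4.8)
The plan is to prove both directions by separating the two constraints defining $A(p_0,q_0)$ and treating the corner $(\beta,\infty)$, $\frac1\beta=\frac1{p_0}+\frac1{q_0}$, as a critical case.

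\emph{Necessity.} I would test the claimed inclusion against the two families already computed. If $\frac1p+\frac1q<\frac1{p_0}+\frac1{q_0}$, I pick $\alpha$ with $\frac1p+\frac1q\le\alpha<\frac1{p_0}+\frac1{q_0}$; by Example \ref{ex1} the function $f_\alpha=(1-z)^{-\alpha}$ lies in $RM(p_0,q_0)$ but not in $RM(p,q)$, forcing the sum condition. To force $p\le p_0$ I use a lacunary series $\sum a_k z^{n_k}$, whose norm is comparable to $\left(\sum|a_k|^p/n_k\right)^{1/p}$ by Proposition \ref{lacunary} and hence $q$-independent: choosing $|a_k|^{p_0}=n_k/k^2$ makes $\sum|a_k|^{p_0}/n_k<\infty$ while $\sum|a_k|^p/n_k=\infty$ for every $p>p_0$, so $f\in RM(p_0,q_0)\setminus RM(p,q)$ for all $q$ (for the degenerate target $p=\infty$ one invokes the analogous $\ell^2$-characterization of lacunary series in $H^q$). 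Finally, when $p_0,q_0<\infty$ I must exclude $(\beta,\infty)$ by producing $f\in RM(p_0,q_0)\setminus RM(\beta,\infty)$. The natural candidate is the logarithmically corrected critical power $g(z)=(1-z)^{-1/\beta}\big(\log\frac{e}{1-z}\big)^{-1/\beta}$: along $\theta=0$ the substitution $u=\log\frac{e}{1-r}$ gives $\int_0^1|g(r)|^\beta\,dr\asymp\int^\infty u^{-1}\,du=\infty$, so $\rho_{\beta,\infty}(g)=\infty$; repeating the Example \ref{ex1} computation at the borderline $\alpha p_0=1+p_0/q_0$ but with the surviving factor $(\log\frac1t)^{-q_0/\beta}$ in the outer integral turns the divergent $\int_0 t^{-1}\,dt$ into the convergent $\int_0 t^{-1}(\log\tfrac1t)^{-q_0/\beta}\,dt$, because $q_0/\beta=1+q_0/p_0>1$, so $g\in RM(p_0,q_0)$.

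\emph{Sufficiency.} I would assemble three building blocks. First, Hölder on the probability measures $dr$ and $d\theta/2\pi$ yields the monotonicities $RM(p',q)\subset RM(p,q)$ for $p\le p'$ and $RM(p,q')\subset RM(p,q)$ for $q\le q'$. Second, for $p<\beta$ the pointwise estimate of Proposition \ref{estimevfunc}, $|f(re^{i\theta})|\lesssim\rho_{p_0,q_0}(f)(1-r)^{-1/\beta}$, integrates in $r$ uniformly in $\theta$ (since $p/\beta<1$), giving $RM(p_0,q_0)\subset RM(p,\infty)$. Third, the genuinely mixed diagonal inclusions come from weak-type interpolation on radial slices: writing $g_\theta(r)=|f(re^{i\theta})|$, $M=\rho_{p_0,q_0}(f)$ and $G(\theta)=\|g_\theta\|_{L^{p_0}[0,1]}$ (so $\|G\|_{L^{q_0}}=M$), the same pointwise bound gives $\|g_\theta\|_{\beta,\infty}\lesssim M$ while $\|g_\theta\|_{p_0,\infty}\le G(\theta)$. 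Lemma \ref{weakinterpol} then yields, for $\frac1p=\frac{1-\lambda}{\beta}+\frac{\lambda}{p_0}$,
\[
\|g_\theta\|_{L^p}\lesssim M^{1-\lambda}G(\theta)^{\lambda},
\]
and choosing $\lambda=q_0/q$ (equivalently $\frac1p+\frac1q=\frac1\beta$) and integrating in $\theta$ gives
\[
\rho_{p,q}(f)\lesssim M^{1-\lambda}\Big(\int_{\mathbb{T}}G(\theta)^{\lambda q}\,\tfrac{d\theta}{2\pi}\Big)^{1/q}=M^{1-\lambda}\,M^{q_0/q}=M,
\]
since $1-\lambda+q_0/q=1$. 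A short covering argument then reaches every $(p,q)\in A(p_0,q_0)\setminus\{(\beta,\infty)\}$: a point on the critical line $\frac1p+\frac1q=\frac1\beta$ with $\beta<p\le p_0$ is handled directly by the diagonal step; a point with $p<\beta$ by the subcritical step into $RM(p,\infty)$; and every remaining point is obtained from one of these by angular or radial monotonicity. For Part (2), where one index is $\infty$, one has $\beta=p_0$ or $\beta=q_0$, so $(\beta,\infty)$ is either the source space itself or is recovered from the Fejér--Riesz inequality exactly as in Proposition \ref{Hardyineq}; hence no point is excluded and the same scheme applies.

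\emph{Main obstacle.} The routine monotonicities dispose of the ``lower-left'' region, but the substance lies in the critical line and the corner. The key insight making the diagonal case tractable is that the evaluation-functional bound upgrades to a \emph{uniform} weak-$L^\beta$ control of the radial slices; once this is in place, Lemma \ref{weakinterpol} does the work mechanically. The most delicate point is therefore the sharp dichotomy at $(\beta,\infty)$: verifying that the strong endpoint genuinely fails when $p_0,q_0<\infty$ (the logarithmic counterexample, where one must track the exact power of the logarithm surviving the double integration) while being restored by Fejér--Riesz when $p_0=\infty$.
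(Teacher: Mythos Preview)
Your sufficiency argument is essentially identical to the paper's: the same three building blocks (H\"older monotonicity, the subcritical embedding into $RM(p,\infty)$ via Proposition~\ref{estimevfunc}, and the diagonal step via Lemma~\ref{weakinterpol} applied to radial slices) assembled by the same covering argument, and the same appeal to Proposition~\ref{Hardyineq} when $p_0=\infty$.

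Your necessity argument is correct but differs from the paper's in two of the three tests. For the sum condition you and the paper both use Example~\ref{ex1}. For $p\le p_0$ you produce a single lacunary function in $RM(p_0,q_0)\setminus RM(p,q)$ via Proposition~\ref{lacunary}; the paper instead tests against the monomials $z^n$, for which $\rho_{p,q}(z^n)=(1+np)^{-1/p}$, and uses the closed graph theorem to conclude $(1+np)^{-1/p}\lesssim(1+np_0)^{-1/p_0}$, forcing $p\le p_0$. For the corner $(\beta,\infty)$ you build the single explicit function $g(z)=(1-z)^{-1/\beta}\bigl(\log\frac{e}{1-z}\bigr)^{-1/\beta}$ and verify $g\in RM(p_0,q_0)\setminus RM(\beta,\infty)$; the paper instead uses the polynomial sequence $f_{n,\beta}$ of Example~\ref{ex3} together with the closed graph theorem, comparing the growth rates $\rho_{\beta,\infty}(f_{n,\beta})\gtrsim\ln^{1/\beta}(n+1)$ and $\rho_{p_0,q_0}(f_{n,\beta})\lesssim\ln^{1/q_0}(n+1)$. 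Your route has the virtue of exhibiting a concrete witness in the set difference (so the argument does not pass through the closed graph theorem), at the cost of the somewhat more delicate double-integral estimate with the logarithmic correction; the paper's route avoids that computation but only shows the inclusion cannot be continuous, invoking closed graph to upgrade this to failure of the set-theoretic inclusion.
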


Before proving the result, it is worth showing a picture of the set $A(p_0,q_0)$. If $p_0, q_0<+\infty$ then the set $\{(\frac{1}{p},\frac{1}{q}):\ (p,q)\in A(p_0,q_0)\}$ is the grey region (including its boundary) in Figure \ref{p0q0finito} while if either $p_0$ or $q_0$ are $+\infty$, the set $\{(\frac{1}{p},\frac{1}{q}):\ (p,q)\in A(p_0,q_0)\}$ is the grey region  (including its boundary) in Figure \ref{p0infinito} or Figure \ref{q0infinito}, respectively.
	
\begin{center}
\begin{figure}
\begin{subfigure}{.3\textwidth}	
		\begin{tikzpicture}
		\draw[<-] (4,0) -- (3.6,0);
		\draw[->] (3.4,0)-- (0,0) -- (0,3) node[left]{1} -- (0,4);
		\draw[-] (3,0.)node[below]{1}--(3,3)--(0,3);
		\draw [domain=0:3.44] plot (\x, {3.5-\x});
		\draw [-] (2,0)node[below] {$\frac{1}{p_{0}}$}--(2,3);
		\draw [-] (0,1.5) node[left] {$\frac{1}{q_{0}}$}-- (3,1.5);
		\draw[pattern=dots] (0,3)--(0,0)--(3,0)--(3.5,0)--(2,1.5)--(2,3)--(0,3);
		\draw[pattern=dots] (0,3)--(2,3)--(2,4)--(0,4)--(0,3);
		\draw[fill=black, opacity=0.2] (2,3)--(3,3)--(3,0.5)--(3.5,0)--(4,0)--(4,4)--(2,4)--(2,3);
		\draw[fill=black, opacity=0.2] (2,1.5)--(3,1.5)--(3,3)--(2,3)--(2,1.5);
		\draw[fill=black, opacity=0.2]  (3,0.5)--(3,1.5)--(2,1.5)--(3,0.5);
		\draw[line width=0.05cm,black] (2,4)--(2,1.5)--(3.5,0.)--(4,0);
		\draw[black,fill=white] (3.5,0)node[below,black]{$\ \ \ \frac{1}{p_0}+\frac{1}{q_0}$} circle[radius=.1,] ;
		\end{tikzpicture}
		\caption{$p_0, q_0<+\infty$}\label{p0q0finito}
\end{subfigure}%
\begin{subfigure}{.3\textwidth}
		\begin{tikzpicture}
		\draw[<-] (4,0) -- (2.6,0);
		\draw[->] (3,0)-- (0,0)node[below]{0} -- (0,3) node[left]{1} -- (0,4);
		\draw[-] (3,0.)node[below]{1}--(3,3)--(0,3);
		\draw[fill] (2.5,0)node[below]{$\frac{1}{q_{0}}$}  ;
		\draw [domain=0:2.45] plot (\x, {2.5-\x});
		\draw [-] (0,2.5) node[left] {$\frac{1}{q_{0}}$}-- (3,2.5);
		\draw[pattern=dots] (2.5,0)--(0,2.5)--(0,0)--(2.5,0);
		\draw[fill=black, opacity=0.2]  (4,0)--(4,4)--(0,4)--(0,2.5)--(2.5,0)--(4,0);
		\draw[line width=0.05cm,black] (0,4)--(0,2.5)--(2.5,0)--(4,0);
		\end{tikzpicture}
		\caption{$p_0=+\infty$}\label{p0infinito}
		\end{subfigure}%
	\begin{subfigure}{.3\textwidth}	
		\begin{tikzpicture}
		\draw[<-] (4,0) -- (3.6,0);
		\draw[->] (3.4,0)-- (0,0) -- (0,3) node[left]{1} -- (0,4);
		\draw[-] (3,0.)node[below]{1}--(3,3)--(0,3);
		\draw [-] (2,0)node[below] {$\frac{1}{p_{0}}$}--(2,3);
		\draw[pattern=dots] (0,3)--(0,0)--(2,0)--(2,3)--(0,3);
		\draw[pattern=dots] (0,3)--(2,3)--(2,4)--(0,4)--(0,3);
		\draw[fill=black, opacity=0.2] (2,4)--(4,4)--(4,0)--(2,0)--(2,4);
		\draw[line width=0.05cm,black] (2,4)--(2,0)--(4,0);
		\end{tikzpicture}
		\caption{$q_0=+\infty$}\label{q0infinito}
	\end{subfigure}%
\caption{}\label{p0}
\end{figure}
\end{center}

\begin{proof} Bearing in mind that $H^{q_0}=RM(\infty, q_0)$, Proposition \ref{Hardyineq} is nothing but the case $p_0=+\infty$. Therefore, from now on, we will assume that $p_0<+\infty$. To clarify the exposition, we split the proof in several steps.

{\bf Step 1.} If  $p_0,q_0<+\infty$ (Figure \ref{p0q0finito}) and  $(p,q)$ is such that $(1/p,1/q)$ belongs to the open segment with end points $(1/p_0,1/q_0)$ and $(\frac{1}{p_0}+\frac{1}{q_0},0)$, then $RM(p_0,q_0)\subset RM(p,q)$. 	

Write $(\frac{1}{p},\frac{1}{q})=\lambda (\frac{1}{p_0},\frac{1}{q_0})+(1-\lambda) (\frac{1}{p_0}+\frac{1}{q_0},0)$ for some $\lambda\in (0,1)$. Take  $f\in RM(p_{0},q_{0})$ with $\rho_{p_0,q_0}(f)\leq 1$. 
For each $\theta$, define $f_\theta(r):=f(re^{i\theta})$. Let us see that $f_\theta\in L^{p_0,\infty}([0,1])\cap L^{\alpha,\infty}([0,1])$ for almost every $\theta\in [0,2\pi]$, where $\frac{1}{p_0}+\frac{1}{q_{0}}=\frac{1}{\alpha}$. Since $f\in RM(p_0,q_0)$, by the very definition, we have that $f_\theta\in L^{p_0}([0,1))$ for almost every $\theta$ and $\|f_\theta\|_{p_0,\infty}\leq \|f_\theta\|_{p_0}$. Moreover, by Proposition~\ref{estimevfunc}, there is a constant $C>0$ such that 
$|f(z)|\leq C\frac{1}{(1-|z|^2)^{\frac{1}{p_0}+\frac{1}{q_0}}}$, for all $z$, and thus
\begin{align*}
	\|f_\theta\|_{\alpha,\infty}=\sup_{t\geq 0} t \mu(\left\{|f_\theta|>t \right\})^{1/\alpha}\leq \sup_{t\geq 0} t \mu\left(\left\{1-r\leq \frac{C^\alpha}{t^\alpha} \right\}\right)^{1/\alpha}=\sup_{t\geq 0} \min\{t,C\}\leq C,
\end{align*}
so that $f_\theta \in  L^{\alpha,\infty}([0,1])$ for all $\theta$. 
	Hence, applying Lemma~\ref{weakinterpol} we have 
	\begin{align*}
	\|f_\theta\|_{p}\leq C(p_0,\alpha,\lambda)\cdot \|f_\theta\|_{p_{0},\infty}^{\lambda}\cdot \|f_\theta\|^{1-\lambda}_{\alpha,\infty}.
	\end{align*}
 Thus	\begin{align*}
	\begin{split}
	\rho_{p,q}(f)&\leq C(p_0,\alpha,\lambda) \left(\int_{0}^{2\pi} \|f_\theta\|_{p_0,\infty}^{\lambda q}\cdot \|f_\theta\|_{\alpha,\infty}^{(1-\lambda) q}\ d\theta \right)^{1/q}\\
	&\leq C(p_0,\alpha,\lambda)\ C^{1-\lambda} \left(\int_{0}^{2\pi} \|f_\theta\|_{p_{0}}^{\lambda q}\ d\theta \right)^{1/q}
	=C(p_0,\alpha,\lambda)\ C^{1-\lambda} \left(\int_{0}^{2\pi} \|f_\theta\|_{p_{0}}^{q_0}\ d\theta \right)^{1/q}\\
	&=C(p_0,\alpha,\lambda)\ C^{1-\lambda}\rho_{p_0,q_0}(f)^{\lambda}\leq C(p_0,\alpha,\lambda)\ C^{1-\lambda}.
	\end{split}
	\end{align*}
	
\noindent {\bf Step 2.} If $\frac{1}{p}>\frac{1}{p_0}+\frac{1}{q_0}$, then $RM(p_0,q_0)\subset RM(p,\infty)$. 

Take $f\in RM(p_0,q_0)$. By Proposition~\ref{estimevfunc}, there is $C>0$ such that
\begin{equation*}
\begin{split}
\rho_{p,\infty} (f)&=\esssup_ {\theta\in[0,2\pi)}\left(\int_{0}^{1} |f(r e^{i \theta})|^p \ dr \right)^{1/p}\\
&\leq C\esssup_ {\theta\in[0,2\pi)}\left(\int_{0}^{1} \frac{\rho_{p_0,q_0} (f)^p}{(1-r)^{(\frac{1}{p_0}+\frac{1}{q_0})p}}  
\ dr \right)^{1/p}<+\infty
\end{split}
\end{equation*}

\noindent  {\bf Step 3.}  If $p_0\geq  p$ and $q_0\geq  q$ then $RM(p_0,q_0)\subset RM(p,q)$. 

This inclusion is a direct consequence of 	H\"oder's inequality.

Denote by $B(p_0,q_0)=\left\{ (p,q)\in \R^{+}\times\R^{+} : p_0\geq p, \ q_0\geq q\right\}$ and $(p_\lambda,q_\lambda)$ the couple such that  $(\frac{1}{p_\lambda},\frac{1}{q_\lambda})=\lambda (\frac{1}{p_0},\frac{1}{q_0})+(1-\lambda) (\frac{1}{p_0}+\frac{1}{q_0},0)$.
Since 
$$
A(p_0,q_0)\setminus \left\{\left(\frac{p_0q_0}{p_0+q_0},\infty\right)\right\}=\cup_{\lambda\in (0,1]} B(p_\lambda,q_\lambda)\cup \left\{(p,\infty):\, \frac{1}{p}>\frac{1}{p_0}+\frac{1}{q_0}\right\},
$$
Steps 1, 2 and 3 give that if $p_0, q_0<+\infty$ and $(p,q)\in A(p_0,q_0)\setminus \left\{\left(\frac{p_0q_0}{p_0+q_0},\infty\right)\right\}$ then $RM(p_0,q_0)\subset RM(p,q)$.

\noindent  {\bf Step 4.} If $RM(p_0,q_0)\subset RM(p,q)$ then $p_0\geq p$.

	By closed graph theorem there is a constant $C>0$ such that $\rho_{p,q}(f)\leq C\rho_{p_0,q_0}(f)$ for all $f\in RM(p_0,q_0)$. Taking $f_{n}(z)=z^n$ we obtain 
	\begin{align*}
	\rho_{p,q}(f_n)=(1+np)^{-\frac{1}{p}}\leq C (1+np_0)^{-\frac{1}{p_0}}=\rho_{p_0,q_0}(f_n)
	\end{align*}	
and this inequality holds for all $n$ if and only if $p_0\geq p$.

\noindent  {\bf Step 5.} If $\frac{1}{p_0}+\frac{1}{q_0}>\frac{1}{p_1}+\frac{1}{q_1}$ then $RM(p_0,q_0)\nsubseteq RM(p_1,q_1)$

	We consider a function $f_\alpha$ of Example~\ref{ex1} such that $\frac{1}{p_1}+\frac{1}{q_1}<\alpha<\frac{1}{p_0}+\frac{1}{q_0}$. Hence, we have a function $f_\alpha$ such that $f_\alpha\in RM(p_0,q_0)\setminus RM(p_1,q_1)$.
	
\noindent  {\bf Step 6.} If $p_0, q_0<+\infty$, then $RM(p_0,q_0)\nsubseteq RM(\beta,\infty)$, where $\beta=\frac{p_0q_0}{p_0+q_0}$.

Assume that $RM(p_0,q_0)\subset RM(\beta,\infty)$.
	By closed graph theorem there is a positive constant $C>0$ such that $\rho_{\beta,\infty}(f)\leq C\rho_{p_0,q_0}(f)$. For each $n$, consider the function $f_{n,\beta}$ introduced in Example~\ref{ex3}. Then 
	\begin{align*}
	\rho_{\beta,\infty}(f_{n,\beta})\geq \left(\int_{0}^{1}\sum_{k=0}^{n} r^k\ dr\right)^{\frac{1}{\beta}}=\left(\sum_{k=0}^{n} \frac{1}{k+1}\right)^{\frac{1}{\beta}}\geq \ln^{\frac{1}{\beta}}(n+1).
	\end{align*}
	Thus, Example~\ref{ex3} would imply 
	\begin{align*}
	\ln^{1/\beta}(n+1)\leq C \left(\frac{p_0}{p_0-\beta}\right)^{1/p_0}\ln^{1/q_0}(n+1),
	\end{align*}
	what is not possible if $n$ is large enough. So  $RM(p_0,q_0)\nsubseteq RM(\beta,\infty)$.

Clearly, Steps 4, 5 and 6 imply that if $RM(p_0,q_0)\subset RM(p,q)$ then $(p,q)\in A(p_0,q_0)\setminus \left\{\left(\beta,\infty\right)\right\}$. Therefore, statement (1) and (2) are proved. 
\end{proof}

A simple argument shows that if $q<+\infty$, the density of the polynomials in $RM(p,q)$ implies that if $RM(p,q)\subset RM(p_0,\infty)$ if and only if $RM(p,q)\subset RM(p_0,0)$.

The situation is not so clear to study when $RM(p_0,0)$ is contained in $RM(p,q)$. To characterize it, we need the following lemma.

\begin{lemma}\label{converge-uniform}
Let $1\leq p,q\leq +\infty$. If $\{f_{n}\}$ is a bounded sequence in $RM(p,q)$ that converges uniformly on compact subsets of the unit disc to $f$. Then $f\in RM(p,q)$. 
\end{lemma}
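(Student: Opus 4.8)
The plan is to show that the functional $\rho_{p,q}$ is lower semicontinuous with respect to pointwise convergence on $\D$, and then to feed in the boundedness hypothesis. The starting observation is that uniform convergence on compact subsets forces $f_n(z)\to f(z)$ for \emph{every} $z\in\D$; in particular $f_n(re^{i\theta})\to f(re^{i\theta})$ for all $r\in[0,1)$ and all $\theta$. This pointwise control is exactly what is needed to apply Fatou's lemma twice, once in the radial variable and once in the angular variable.

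Concretely, in the case $1\le p,q<+\infty$ I would fix $\theta$ and apply Fatou's lemma for Lebesgue measure on $[0,1]$ to obtain $\int_0^1|f(re^{i\theta})|^p\,dr\le\liminf_n\int_0^1|f_n(re^{i\theta})|^p\,dr$. Writing $u(\theta)=\big(\int_0^1|f(re^{i\theta})|^p\,dr\big)^{1/p}$ and $u_n(\theta)=\big(\int_0^1|f_n(re^{i\theta})|^p\,dr\big)^{1/p}$, and using that $x\mapsto x^{1/p}$ is increasing and continuous, this reads $u(\theta)\le\liminf_n u_n(\theta)$ for every $\theta$. A second application of Fatou's lemma, now to the normalized measure on $[0,2\pi]$, gives $\|u\|_{L^q}\le\|\liminf_n u_n\|_{L^q}\le\liminf_n\|u_n\|_{L^q}$, that is $\rho_{p,q}(f)\le\liminf_n\rho_{p,q}(f_n)$. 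Since $\{f_n\}$ is bounded, the right-hand side is at most $\sup_n\rho_{p,q}(f_n)<+\infty$, and hence $f\in RM(p,q)$.

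For the remaining cases I would replace the relevant integral by the corresponding supremum or essential supremum, the point being that each of the two norm layers stays lower semicontinuous. When $p=+\infty$ the inner layer satisfies $\sup_{r}|f(re^{i\theta})|\le\liminf_n\sup_{r}|f_n(re^{i\theta})|$, which follows by fixing $r$, passing to the limit, and then taking the supremum over $r$; the outer Fatou step is unchanged. When $q=+\infty$ the outer layer is an essential supremum: from $u_n(\theta)\le\rho_{p,\infty}(f_n)$ for a.e.\ $\theta$ (for each $n$, collecting the exceptional null sets into a single one) one gets $u(\theta)\le\liminf_n u_n(\theta)\le\liminf_n\rho_{p,\infty}(f_n)$ for a.e.\ $\theta$, whence $\rho_{p,\infty}(f)\le\liminf_n\rho_{p,\infty}(f_n)$; here Remark~\ref{remarkesssupremum} allows passing freely between supremum and essential supremum. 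The case $p=q=+\infty$ is the trivial $H^\infty$ estimate $|f(z)|=\lim_n|f_n(z)|\le\sup_n\|f_n\|_{H^\infty}$.

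The only thing requiring care is the interchange of $\liminf$ with the two norm operations and keeping the countably many null sets under control in the $q=+\infty$ case; everything else is routine. An equivalent and perhaps more transparent alternative would truncate the radial integral to $[0,\rho]$ with $\rho<1$: on the compact disc $\overline{D(0,\rho)}$ the convergence is uniform, so the $\rho$-truncated inner norms converge uniformly in $\theta$, which yields $\|u^{(\rho)}\|_{L^q}\le\sup_n\rho_{p,q}(f_n)$ for each fixed $\rho$, and then letting $\rho\to1$ through the monotone convergence theorem recovers $\rho_{p,q}(f)$. I expect the double Fatou argument to be the cleanest to write out.
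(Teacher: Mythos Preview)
Your proposal is correct and follows essentially the same approach as the paper: a double application of Fatou's lemma (first in the radial variable, then in the angular variable) to obtain $\rho_{p,q}(f)\le\liminf_n\rho_{p,q}(f_n)$, with the remaining cases handled by replacing the relevant integral by a supremum. The paper's proof is more terse (it simply says ``a similar argument works in the remaining cases''), whereas you spell these out explicitly, but the underlying argument is identical.
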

\begin{proof} Clearly the function $f$ is holomorphic. Assume that $p,q<+\infty$. By Fatou's Lemma, for each $\theta$ we have
$$
\int_{0}^{1}|f(re^{i\theta})|^{p}\, dr\leq g(\theta):=\liminf_{n} g_{n}(\theta),
$$
where, for each $n$, $g_{n}(\theta) :=\int_{0}^{1}|f_{n}(re^{i\theta})|^{p}\, dr$.
Repeating again the argument, we have
\begin{equation*}
\begin{split}
\rho_{p,q}(f)^{q}&\leq 
\frac{1}{2\pi}\int_{0}^{2\pi} \left(g(\theta)\right)^{q/p}\, d\theta\leq 
\liminf_{n}\frac{1}{2\pi}\int_{0}^{2\pi} \left(g_{n}(\theta)\right)^{q/p}\, d\theta \\
&=\liminf_{n} \rho_{p,q}^q(f_{n})\leq \sup_{n} \rho_{p,q}^q(f_{n})<+\infty.
\end{split}
\end{equation*}
A similar argument works in the remaining cases, so that we are done.
\end{proof}

\begin{proposition}
Let $1\leq p_0\leq +\infty$. Then $RM(p_0,\infty)\subset RM(p,q) $ if and only if $RM(p_0,0)\subset RM(p,q) $. 
\end{proposition}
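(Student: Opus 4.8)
The plan is to establish the two implications separately; only one of them carries any content. Since $RM(p_0,0)$ is by definition a subspace of $RM(p_0,\infty)$ carrying the restricted norm $\rho_{p_0,\infty}$, the implication ``$RM(p_0,\infty)\subset RM(p,q)$ $\Rightarrow$ $RM(p_0,0)\subset RM(p,q)$'' is immediate: it is just the composition of the inclusion $RM(p_0,0)\hookrightarrow RM(p_0,\infty)$ with the assumed inclusion. So the work lies entirely in showing that $RM(p_0,0)\subset RM(p,q)$ forces $RM(p_0,\infty)\subset RM(p,q)$.

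For that reverse implication I would first upgrade the set-theoretic inclusion $RM(p_0,0)\subset RM(p,q)$ to a bounded one. Both spaces are Banach and point evaluations are continuous on each of them (Proposition \ref{estimevfunc}), so the inclusion map has closed graph: if $g_n\to g$ in $RM(p_0,0)$ and $g_n\to h$ in $RM(p,q)$, then $g_n\to g$ and $g_n\to h$ pointwise, whence $g=h$. The closed graph theorem then yields a constant $C>0$ with $\rho_{p,q}(g)\le C\,\rho_{p_0,\infty}(g)$ for every $g\in RM(p_0,0)$.

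The key step is then a dilation-and-limit argument. Given $f\in RM(p_0,\infty)$, Proposition \ref{propp0unif} tells us that each dilate $f_r$ (for $0<r<1$) lies in $RM(p_0,0)$, and the computation in its proof gives $\rho_{p_0,\infty}(f_r)\le \tfrac1r\,\rho_{p_0,\infty}(f)$. Applying the bounded inclusion to $g=f_r$ yields $\rho_{p,q}(f_r)\le \tfrac{C}{r}\,\rho_{p_0,\infty}(f)$, so the family $\{f_r\}_{1/2<r<1}$ is bounded in $RM(p,q)$ by $2C\,\rho_{p_0,\infty}(f)$. Choosing any sequence $r_n\uparrow 1$, the functions $f_{r_n}$ converge to $f$ uniformly on compact subsets of $\D$, and Lemma \ref{converge-uniform} gives $f\in RM(p,q)$. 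This proves $RM(p_0,\infty)\subset RM(p,q)$ and closes the equivalence.

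I do not expect a serious obstacle: every ingredient—continuity of evaluations, the closed graph theorem, the dilation estimate of Proposition \ref{propp0unif}, and the Fatou-type lower semicontinuity packaged in Lemma \ref{converge-uniform}—is already available. The only points deserving care are the verification that the inclusion map is closed (so that the closed graph theorem produces the uniform constant $C$) and the passage to the limit, where Lemma \ref{converge-uniform} must be invoked along a sequence $r_n\uparrow 1$ rather than the continuous parameter $r$. For $p_0=+\infty$ the same scheme applies, the dilation bound becoming simply $\rho_{\infty,\infty}(f_r)=\sup_{|z|\le r}|f(z)|\le \rho_{\infty,\infty}(f)$, so that the boundedness of the inclusion together with Lemma \ref{converge-uniform} finishes the argument exactly as before.
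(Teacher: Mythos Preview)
Your argument is correct and follows essentially the same route as the paper: dilate $f\in RM(p_0,\infty)$ to obtain $f_r\in RM(p_0,0)$, observe that $\{f_r\}$ is bounded there, transfer this boundedness to $RM(p,q)$ via the (closed graph) continuity of the inclusion, and conclude with Lemma~\ref{converge-uniform}. You are simply more explicit than the paper about invoking the closed graph theorem and the dilation estimate $\rho_{p_0,\infty}(f_r)\le r^{-1}\rho_{p_0,\infty}(f)$.
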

\begin{proof} Assume that $RM(p_0,0)\subset RM(p,q) $. Take $f\in RM(p_0,\infty)$. For each $r<1$, the function $f_{r}$ belongs to $RM(p_0,0)$ and then to $RM(p,q) $. Since $\{f_{r}:\, r<1\}$ is bounded in $RM(p_0,0)$, it is also bounded in $RM(p,q) $. Since $f_{r}$ converges uniformly on compact subset of $\D$ to $f$, Lemma \ref{converge-uniform} guarantees that $f\in RM(p,q)$.
\end{proof}

\subsection{Compactness of the inclusions}
Once the containment relationships of these spaces have been determined, we study  when such inclusions are compact.

A standard argument shows the following characterization of compactness.

\begin{lemma}\label{lemmacompactness}
Let $1\leq p_0,q_0\leq +\infty$ and $1\leq p,q\leq +\infty$. Then $i:RM(p_0,q_0)\rightarrow RM(p,q)$ is compact if and only if every bounded sequence $\{f_n\}$ in $RM(p_0,q_0)$ that converges to zero uniformly on compact subsets of the unit disc satisfies that $ \lim_n\rho_{p,q} (f_n)=0$.
\end{lemma} 

We will use this lemma several times in the proof of the next theorem without explicit reference. We also need the following result.

	\begin{proposition}\label{notangpinf}
	Let $1\leq p<+\infty$, $f\in RM(p,\infty)$, and $\sigma\in \partial \D$. Then for the non-tangential limit we have $\angle\lim\limits_{z\rightarrow \sigma}  f(z)(1-\overline \sigma z)^{1/p}=0$.
\end{proposition}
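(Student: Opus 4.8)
The plan is to reduce to the boundary point $\sigma=1$, to prove that the relevant quantity tends to $0$ along the radius, and then to upgrade this radial limit to a non-tangential one by means of Lindelöf's theorem.

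First I would use the rotation invariance of $RM(p,\infty)$ to assume $\sigma=1$: replacing $f$ by $z\mapsto f(\sigma z)$ changes neither the norm nor the statement, since $1-\overline{\sigma}(\sigma z)=1-z$. As $\mathrm{Re}(1-z)>0$ on $\D$, the function $g(z):=f(z)(1-z)^{1/p}$ (principal branch) is holomorphic on $\D$ with $|g(z)|=|f(z)|\,|1-z|^{1/p}$. By Proposition~\ref{estimevfunc} with $q=\infty$ one has $|f(z)|\lesssim \rho_{p,\infty}(f)(1-|z|)^{-1/p}$, hence on each Stolz angle $\Gamma_\beta=\{z\in\D:|1-z|<\beta(1-|z|)\}$ we get $|g(z)|\lesssim \rho_{p,\infty}(f)\,\beta^{1/p}$, so $g$ is bounded on every $\Gamma_\beta$. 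Note, however, that $g$ is in general \emph{unbounded} on all of $\D$: it grows along tangential approach, as the bump functions of Theorem~\ref{non-separability} exhibit.

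The core step is the radial limit $g(x)\to 0$ as $x\to 1^-$. Writing $h(x):=|f(x)|^p$, the definition of $\rho_{p,\infty}$ gives $\int_0^1 h(x)\,dx\le \rho_{p,\infty}(f)^p<\infty$, while Propositions~\ref{estimevfunc} and~\ref{estiderfunc} (with $q=\infty$) yield the estimate $|h'(x)|\le p|f(x)|^{p-1}|f'(x)|\lesssim \rho_{p,\infty}(f)^p(1-x)^{-2}$. I would argue by contradiction: if $h(x_n)(1-x_n)\ge c>0$ for some $x_n\to 1$, the derivative bound forces $h(x)\ge c/\big(2(1-x_n)\big)$ on an interval $I_n$ centred at $x_n$ of length comparable to $1-x_n$, whence $\int_{I_n}h\gtrsim c$; passing to a sparse subsequence makes the $I_n$ pairwise disjoint, contradicting $\int_0^1 h<\infty$. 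Therefore $h(x)(1-x)\to 0$, i.e. $g(x)\to 0$.

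Finally I would conclude with Lindelöf's theorem in its sectorial form: a function holomorphic and bounded on a sector with vertex $\sigma$ that has a limit along one arc ending at $\sigma$ has that same limit in every proper subsector. Applying this to $g$ on $\Gamma_\beta$ (bounded there, with limit $0$ along the radius, which is an arc in $\Gamma_\beta$ ending at $1$) yields $g\to 0$ on every $\Gamma_{\beta'}$ with $\beta'<\beta$; since $\beta$ is arbitrary this gives $\angle\lim_{z\to 1}g(z)=0$, as required. The main obstacle is precisely this last upgrade. Because $g$ is unbounded on $\D$, the classical disc form of Lindelöf's theorem does not apply, and one must invoke (or reprove, via a conformal map of the sector onto a half-plane) the sectorial version; the holomorphy of $g$ is essential here, since a purely real-variable attempt to run the previous interval argument in two dimensions stumbles on the fact that the radial mass bound $\sup_\theta\int_0^1|f(re^{i\theta})|^p\,dr<\infty$ controls each ray separately, so the angular window appearing in a subharmonic average need not shrink fast enough on its own.
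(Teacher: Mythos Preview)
Your proposal is correct and follows essentially the same route as the paper's proof: reduce to $\sigma=1$, show that $g(z)=f(z)(1-z)^{1/p}$ is bounded on each Stolz region via Proposition~\ref{estimevfunc}, prove the radial limit $g(x)\to 0$ by a contradiction argument based on the derivative bound of Proposition~\ref{estiderfunc}, and then invoke Lindel\"of's theorem in its sectorial form.

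The only cosmetic differences are these. For the radial step the paper works directly with $|f|$ rather than with $|f|^p$: it bounds $|f(x)-f(r_k)|$ by $\sup|f'|\cdot|x-r_k|$, which sidesteps the (harmless) differentiability issue for $|f|^p$ at zeros of $f$ when $p=1$; and it concludes by observing that the tail $\int_\rho^1|f|^p$ cannot stay bounded below, rather than by extracting disjoint intervals. On the Lindel\"of step the paper simply cites a reference for the sectorial version, whereas you spell out why the disc version is insufficient (because $g$ need not be bounded on all of~$\D$); your remark there is a genuine clarification of a point the paper leaves implicit.
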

\begin{proof} Without loss of generality we assume that $\sigma=1$. Suppose that $\rho_{p,\infty}(f)\leq 1$ and   consider the holomorphic function $h(z)=f(z)(1-z)^{1/p}$. Fix $R>1$ and the Stolz region $S(1,R)=\{z\in \D: |1-z|<R(1-|z|)\}$. Looking at \eqref{eqfung} in the proof of Proposition \ref{estimevfunc}, we see that there is a constant $C$ such that 
	$$|h(z)|\leq {R^{1/p}}|f(z)|(1-|z|)^{1/p}\leq C R^{1/p}, \quad z\in S(1,R).$$
	That is, the function $h$ is bounded on $S(1,R).$ Therefore, by Lindel\"of's Theorem \cite[Theorem 1.5.7, p. 26]{Bracci_Contreras_Diaz-Madrigal}, it is enough to prove that 
	$\lim_{r\rightarrow 1^{-}} |f(r)|(1-r)^{1/p}=0$. 
	
	Assume by contradiction that there is a constant $c_1>0$ and a sequence $\{r_k\}$ where $r_k\rightarrow 1^{-}$ such that $c_1\leq |f(r_k)|(1-|r_k|)^{1/p}$ for all $k$.
	Write  $\delta_{k}:=1-r_{k}$. By Proposition \ref{estiderfunc}, there is a constant $C$ such that 
	$|f'(x)|\leq\frac{C}{(1-x)^{1+\frac{1}{p}}}$ for all $x\in (0,1)$. Choose $ \varepsilon<\frac{c_1}{2C}$. Then,  for  $1-(1+\varepsilon)\delta_{k}<x<1-\delta_{k}$, 
	\begin{align*}
	|f(x)-f(r_k)|\leq C |x-r_k| \frac{1}{(1-r_k)^{1+1/p}}\leq C \varepsilon \delta_{k}\frac{1}{\delta_{k}^{1+1/p}}=\frac{C\varepsilon}{\delta_{k}^{1/p}}<\frac{c_1}{2\delta_{k}^{1/p}}.
	\end{align*}
	Thus
	$$
	|f(x)|\geq |f(r_k)|-|f(x)-f(r_k)|\geq  \frac{c_1}{(1-r_k)^{1/p}}- \frac{c_1}{2\delta_{k}^{1/p}}= \frac{c_1}{2\delta_{k}^{1/p}}
	$$
	and 
	\begin{align*}
	\left(\int_{1-(1+\varepsilon)\delta_{k}}^{1-\delta_{k}} |f(x)|^{p}\ dx\right)^{1/p}\geq (\varepsilon \delta_{k})^{1/p} \frac{C_1}{2\delta_{k}^{1/p}}=\frac{C_1 \varepsilon^{1/p}}{2}.
	\end{align*}
	But, this is impossible because $\int_{0}^{1} |f(x)|^{p}\ dx<+\infty$.
\end{proof}

\begin{theorem}\label{thm:compactness}
	Let $1\leq p_0,q_0\leq +\infty$ and $1\leq p,q\leq +\infty$. Then $i:RM(p_0,q_0)\rightarrow RM(p,q)$ is compact if and only if $\frac{1}{p}+\frac{1}{q}>\frac{1}{p_0}+\frac{1}{q_0}$ and $p<p_0$.
\end{theorem}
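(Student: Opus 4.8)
The plan is to prove both directions through the compactness criterion of Lemma~\ref{lemmacompactness}, i.e. by testing against bounded sequences in $RM(p_0,q_0)$ that converge to $0$ uniformly on compact subsets of $\D$. Write $\alpha_0=\frac1{p_0}+\frac1{q_0}$ and $\alpha=\frac1p+\frac1q$. Since a compact operator is bounded, for necessity I may assume the inclusion is bounded, so by Theorem~\ref{thm:inclusion} one already has $p\le p_0$ and $\alpha\ge\alpha_0$ (and $(p,q)$ is not the excluded corner); it then remains to rule out the two boundary situations $p=p_0$ and $\alpha=\alpha_0$.

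For necessity I would exhibit, in each boundary situation, a bounded, compacta‑null sequence whose $RM(p,q)$‑norms stay bounded away from $0$. When $p=p_0$ the monomials suffice: since $\rho_{p,q}(z^n)=(1+np)^{-1/p}$ does not depend on $q$, the normalized $g_n=z^n/\rho_{p_0,q_0}(z^n)$ satisfy $\rho_{p_0,q_0}(g_n)=1$, tend to $0$ uniformly on compacta, and $\rho_{p,q}(g_n)=1$ because $p=p_0$. When $\alpha=\alpha_0$ I would use the scaled kernels
\begin{equation*}
f_a(z)=\frac{(1-a)^{\tau-\alpha}}{(1-az)^{\tau}},\qquad a\to1^-,
\end{equation*}
for a fixed $\tau>\alpha$. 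Here $f_a\to0$ uniformly on compacta (the prefactor vanishes since $\tau>\alpha$), while a computation in the spirit of Example~\ref{ex1}, splitting $\int_0^1|f_a(re^{i\theta})|^p\,dr$ according to the size of $|\theta|$ relative to $1-a$, gives $\rho_{p,q}(f_a)\asymp1$ and, symmetrically, $\rho_{p_0,q_0}(f_a)\asymp1$. This is the step I expect to be the main obstacle: at the \emph{critical} exponent $\tau=\alpha$ the two norms blow up only logarithmically and at \emph{different} rates $(\log\frac1{1-a})^{1/q}$ versus $(\log\frac1{1-a})^{1/q_0}$, so a single critical bump fails precisely when $q>q_0$; choosing $\tau>\alpha$ and compensating with the factor $(1-a)^{\tau-\alpha}$ is exactly what produces a genuinely scale‑invariant family and cancels the logarithmic mismatch. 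The key point is the identity $\tau-\alpha=\tau-\alpha_0$ on the boundary line, which lets one prefactor serve both spaces at once. By Lemma~\ref{lemmacompactness} the inclusion is then not compact, finishing necessity.

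For sufficiency, assume $p<p_0$ and $\alpha>\alpha_0$. I would first treat the \emph{easy case} $p<p_0$ with $q\le q_0$. Given $\{f_n\}$ bounded in $RM(p_0,q_0)$ with $f_n\to0$ uniformly on compacta, split the radial integral at a radius $\rho<1$: on $[0,\rho]$ uniform convergence yields $\sup_\theta\int_0^\rho|f_n(re^{i\theta})|^p\,dr\to0$, while on $[\rho,1]$ Hölder's inequality in $r$ (this is where $p<p_0$ enters) gives
\begin{equation*}
\left(\int_\rho^1|f_n(re^{i\theta})|^p\,dr\right)^{1/p}\le (1-\rho)^{\frac1p-\frac1{p_0}}\left(\int_\rho^1|f_n(re^{i\theta})|^{p_0}\,dr\right)^{1/p_0}.
\end{equation*}
Integrating in $\theta$ and using $q\le q_0$ to bound $\rho_{p_0,q}(f_n)\le\rho_{p_0,q_0}(f_n)\le M$, one obtains $\limsup_n\rho_{p,q}(f_n)\le (1-\rho)^{\frac1p-\frac1{p_0}}M$ for every $\rho<1$, hence $\rho_{p,q}(f_n)\to0$, and Lemma~\ref{lemmacompactness} gives compactness.

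The general case then follows by factoring through an intermediate space. Since $p<p_0$ and $\alpha>\alpha_0$, the condition
\begin{equation*}
\frac1{p_0}<\frac1{p_1}<\min\left(\frac1p,\ \alpha-\frac1{q_0}\right)
\end{equation*}
describes a nonempty interval, and for such $p_1$ I would write the inclusion as the composition
\begin{equation*}
RM(p_0,q_0)\hookrightarrow RM(p_1,q_0)\hookrightarrow RM(p,q).
\end{equation*}
The first arrow is compact by the easy case ($p_1<p_0$ and its second index equals $q_0\le q_0$), while the second is bounded by Theorem~\ref{thm:inclusion}, since $p\le p_1$ and $\alpha>\frac1{p_1}+\frac1{q_0}$ place $(p,q)$ in the interior of $A(p_1,q_0)$, away from the excluded corner. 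A compact operator followed by a bounded one is compact, so $RM(p_0,q_0)\hookrightarrow RM(p,q)$ is compact. The degenerate cases are handled by the same scheme, reading the radial Hölder step with a supremum when $p_0=\infty$ and the $\theta$‑integral with an essential supremum when $q_0=\infty$.
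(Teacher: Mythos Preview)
Your proof is correct and takes a route genuinely different from the paper's in both directions.

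For necessity on the diagonal $\alpha=\alpha_0$, the paper argues by duality: it shows that $\delta_z/\|\delta_z\|_{(RM(p,q))^*}$ is weak$^*$-null (using density of polynomials when $q<\infty$, and Proposition~\ref{notangpinf} in the remaining case $p_0=q=\infty$), while $\|i^*(\delta_z/\|\delta_z\|)\|_{(RM(p_0,q_0))^*}$ stays bounded below because of the matching norm asymptotics of Proposition~\ref{estimevfunc}. Your direct test-function approach with $f_a(z)=(1-a)^{\tau-\alpha}(1-az)^{-\tau}$, $\tau>\alpha$, is equally valid and more concrete; the estimate $\rho_{p,q}(f_a)\asymp1$ (and likewise for $(p_0,q_0)$, thanks to $\alpha=\alpha_0$) does follow by the computation of Example~\ref{ex1} after the change of variable $s=1-ar$, and the endpoint cases $p_0=\infty$ or $q=\infty$ cause no trouble.

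For sufficiency the two arguments are in a sense mirror images. The paper first handles the range $q>q_0$: it shows compactness of $RM(p_0,q_0)\hookrightarrow RM(p_1,\infty)$ for $1/p_1>\alpha_0$ via the pointwise bound $|f_n(z)|\lesssim(1-|z|)^{-\alpha_0}$ from Proposition~\ref{estimevfunc} together with dominated convergence, then uses the interpolation inequality $\rho_{p,q}(f)\le\rho_{p_0,q_0}(f)^{\lambda}\rho_{p_1,\infty}(f)^{1-\lambda}$, and finally factors through $RM(p_0,\tilde q)$ for the remaining case $q\le q_0$. You do the opposite: first the ``easy case'' $q\le q_0$ directly via the radial split and H\"older, then factor through $RM(p_1,q_0)$ for $q>q_0$. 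Your route is slightly more elementary, avoiding both the interpolation inequality and the pointwise evaluation estimate; the paper's route has the advantage of isolating the compactness mechanism (domination by a fixed $L^{p_1}$ majorant on each radius) more transparently.
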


As we can see in the Figure \ref{p0compacto}, the grey region, removing this time the dotted lines, represents the spaces $RM(p,q)$ such as $i:RM(p_0,q_0)\rightarrow RM(p,q)$ is compact when $p_0<+\infty$ in Figure \ref{p0finitocompacto} and when $p_0=+\infty$ in Figure \ref{p0infinitocompacto}.

\begin{center}
\begin{figure}
\begin{subfigure}{.4\textwidth}
	\begin{tikzpicture}[scale=1, every node/.style={scale=1}]
	\draw[<-] (4.5,0) -- (3.6,0);
	\draw[->] (3.4,0)-- (0,0) -- (0,3) node[left]{1} -- (0,4);
	\draw[-] (3,0.)node[below]{1}--(3,0.5);
	\draw[-] (2,3)--(0,3);
	\draw[-](3,0.5)--(3,3)--(2,3);
	\draw [domain=0:3.44] plot (\x, {3.5-\x});
	\draw [-] (2,0)node[below] {$\frac{1}{p_{0}}$}--(2,3);
	\draw [-] (0,1.5) node[left] {$\frac{1}{q_{0}}$}-- (3,1.5);
	\draw[fill=black, opacity=0.2] (2,1.5)--(3,1.5)--(3,3)--(2,3)--(2,1.5);
	\draw[fill=black, opacity=0.2]  (2,1.5)--(3.,0.5)--(3,1.5)--(2,1.5);
	\draw[line width=3.5,black,dashed] (2,4)--(2,1.5)--(3,0.5);
	\draw[black, fill=black] (3,0.5) circle[radius=0.05] ; 
	\draw[fill,white] (3.5,0)node[below,black]{$\ \ \ \frac{1}{p_0}+\frac{1}{q_0}$} circle[radius=.09] ;
	\draw[black] (3.5,0) circle[radius=0.09] ;
	\end{tikzpicture}
	\caption{$p_0<+\infty$}\label{p0finitocompacto}
	\end{subfigure}%
\begin{subfigure}{.4\textwidth}
	\begin{tikzpicture}[scale=1, every node/.style={scale=1}]
	\draw[<-] (4.5,0) -- (2.5,0);
	\draw[-] (2.5,0)-- (0,0)node[below]{0} -- (0,3) node[left]{1};
	\draw[line width=3.5 ,black] (0,3)  -- (0,4);
	\draw[-, black] (3,0.)node[below,black]{1}--(3,3)--(0,3);
	\draw [domain=0:2.45] plot (\x, {2.5-\x});
	\draw [-] (0,2.5) node[left] {$\frac{1}{q_{0}}$};
	\draw[fill=black, opacity=0.2] (0,2.5)--(3,2.5)--(3,3)--(0,3)--(0,2.5);
	\draw[fill=black, opacity=0.2]  (2.5,0)--(3,0)--(3,2.5)--(0,2.5)--(2.5,0);
	\draw[line width=3.5,black,dashed] (0,3)--(0,2.5)--(2.5,0);
	\draw[-] (0,2.5) -- (3,2.5);
	\draw [fill=white] (2.4,-0.1) rectangle (2.6,0.1);
	\draw[fill,black] (2.5,0)node[below]{$\frac{1}{q_{0}}$};
	\end{tikzpicture}
	\caption{$p_0=+\infty$}\label{p0infinitocompacto}
\end{subfigure}%
\caption{}\label{p0compacto}
\end{figure}
\end{center}

\begin{proof}
Bearing in mind Theorem \ref{thm:inclusion}, we have to prove that the inclusion is compact if $\frac{1}{p}+\frac{1}{q}>\frac{1}{p_0}+\frac{1}{q_0}$ and $p<p_0$ and it is not compact if either $\frac{1}{p}+\frac{1}{q}=\frac{1}{p_0}+\frac{1}{q_0}$ or  $p=p_0$.

Let us start by showing that it is not compact if $p=p_0$.  For each $n$, consider the function $f_{n}(z)=(np_0+1)^{1/p_0}z^n$, $z\in \D$. A simple calculation shows that $\rho_{p_0,q_0}(f_n)=1$ and that the sequence $\{f_n\}$ converges uniformly to zero on compacts of the unit disc. Assume that $i: RM(p_0,q_0)\rightarrow RM(p_0,q)$ is compact, then there exists a subsequence $\{f_{n_k}\} $ such that $\rho_{p_0,q}(f_{n_k})$ must go to $0$ as $k$ goes to $\infty$. But this is not possible because $\rho_{p_0,q}(f_n)=1$ for all $n$. 

Take now $p$ and $q$ such that  $\frac{1}{p}+\frac{1}{q}=\frac{1}{p_0}+\frac{1}{q_0}$. Assume that $i:RM(p_0,q_0)\rightarrow RM(p,q)$ is compact. Then  $i^{\ast}:(RM(p,q))^{\ast}\rightarrow (RM(p_0,q_0))^{\ast}$ is also a compact operator. We assume that $q<+\infty$.
	
Let us see that $\frac{\delta_z}{\|\delta_z\|_{\left(RM(p,q)\right)^{\ast}}}$ $w^{\ast}$-converges to $0$ when $|z|\rightarrow 1$. Taking $p$ a polynomial we obtain 
	\begin{align*}
\frac{|\delta_z(p)|}{\|\delta_z\|_{\left(RM(p,q)\right)^{\ast}}}\asymp |p(z)|(1-|z|^2)^{\frac{1}{p}+\frac{1}{q}}\leq \|p\|_{\infty}(1-|z|^2)^{\frac{1}{p}+\frac{1}{q}},
	\end{align*}
which clearly goes to $0$ 	when $|z|\rightarrow 1$. 
Since $q<+\infty$, by Proposition \ref{desnsity-polynomials},  polynomials  are dense in  $RM(p,q)$ and then $\frac{\delta_z}{\|\delta_z\|_{\left(RM(p,q)\right)^{\ast}}}$ $w^{\ast}$-converges to $0$ when $|z|\rightarrow 1$.
 Therefore, the compactness of $i^*$ gives
	\begin{align*}
	\lim_{|z|\rightarrow 1}\left\|i^{\ast}\left(\frac{\delta_z}{\|\delta_z\|_{\left(RM(p,q)\right)^{\ast}}}\right)  \right\|_{(RM(p_0,q_0))^{\ast}}=0 .
	\end{align*}
However, this is impossible because, as we shall now see, such norm must be greater than a certain positive constant.
	Indeed, let $f\in RM(p_0,q_0)$. We have 
	\begin{align*}
	\left|\left<f,i^{\ast}\left(\frac{\delta_z}{\|\delta_z\|_{\left(RM(p,q)\right)^{\ast}}}\right)\right>\right|=\left|\left<f,\frac{\delta_z}{\|\delta_z\|_{\left(RM(p,q)\right)^{\ast}}}\right>\right|=\frac{|f(z)|}{\|\delta_z\|_{\left(RM(p,q)\right)^{\ast}}}
	\end{align*}
	and, by  Proposition \ref{estimevfunc},
	\begin{align*}
\left\|i^{\ast}\left(\frac{\delta_z}{\|\delta_z\|_{\left(RM(p,q)\right)^{\ast}}}\right)  \right\|_{(RM(p_0,q_0))^{\ast}}\geq \frac{\|\delta_z\|_{\left(RM(p_0,q_0)\right)^{\ast}}}{\|\delta_z\|_{\left(RM(p,q)\right)^{\ast}}}\asymp 1.
	\end{align*}
The argument for the case $p_0=q=\infty$ is the same. However, we consider a sequence $\{z_{n}\}$ in the Stolz region such that $|z_n|\rightarrow 1$. In this way, we obtain the $w^{\ast}$-convergence bearing in mind  Proposition~\ref{notangpinf}.

Assume now that $\frac{1}{p_1}>\frac{1}{p_0}+\frac{1}{q_0}$ and take a sequence $\{f_n\}$ in $RM(p_0,q_0)$ such that $\rho_{p_0,q_0}(f_n)\leq 1$ for all $n$ and it converges to zero uniformly on compact subsets of $\D$. We claim that $\lim_n\rho_{p_1,\infty}(f_n)=0$. Otherwise, there is $\epsilon >0$ and a subsequence (that we denote equal) such that $\rho_{p_1,\infty}(f_n)>\epsilon$ for all $n$. Thus, we find $\{\theta_n\}$ such that 
\begin{equation}\label{Eq:compactness}
\int_0^1|f_n(re^{i\theta_n})|^{p_1} dr\geq \epsilon^{p_1},
\end{equation}
for all $n\in \N$. For each $n$, we write $g_n(r):=f_n(re^{i\theta_n})$, $r\in [0,1)$. Since $\rho_{p_0,q_0}(f_n)\leq 1$, there is a constant $C>0$ such that 
$$
|g_n(r)|=|f_n(re^{i\theta_n})|\leq \frac{C}{(1-r^2)^{\frac{1}{p_0}+\frac{1}{q_0}}}.
$$
Since the map $r\mapsto \frac{C}{(1-r^2)^{\frac{1}{p_0}+\frac{1}{q_0}}}$ belongs to $L^{p_1}([0,1])$ and $\{g_n\}$ converges pointwise to zero, we get that it converges to zero in the norm of $L^{p_1}([0,1])$ which contradicts \eqref{Eq:compactness}. So that the claim holds. 

Take now $p,q$ such that there is $\lambda\in (0,1)$ with $\left(\frac{1}{p},\frac{1}{q}\right)= \lambda \left(\frac{1}{p_0},\frac{1}{q_0}\right)+(1-\lambda)\left(\frac{1}{p_1},0\right)$. Then, for each $f\in RM(p,q)$,
\begin{equation*}
\begin{split}
\int_0^1|f(re^{i\theta})|^p\, dr&=\int_0^1|f(re^{i\theta})|^{\lambda p}|f(re^{i\theta})|^{(1-\lambda)p}\, dr \\
&\leq \left(\int_0^1|f(re^{i\theta})|^{ p_0}\, dr\right)^{\lambda p/p_0}\left(\int_0^1|f(re^{i\theta})|^{ p_1}\, dr\right)^{(1-\lambda) p/p_1}.
\end{split}
\end{equation*}
So that
\begin{equation*}
\rho_{p,q}(f)\leq \rho_{p_0,q_0}(f)^\lambda \rho_{p_1,\infty}(f)^{1-\lambda}. 
\end{equation*}
This inequality, the above claim and Lemma \ref{lemmacompactness} show that $i\colon RM(p_0,q_0)\rightarrow RM(p,q)$ is compact whenever $\frac{1}{p}+\frac{1}{q}>\frac{1}{p_0}+\frac{1}{q_0}$ and $q_0<q$. 

Take now $p,q$ such that $\frac{1}{p}+\frac{1}{q}>\frac{1}{p_0}+\frac{1}{q_0}$,  $p<p_0$ and $q_0\geq q$. Fix $\tilde q<q$ such that $\frac{1}{p_0}+\frac{1}{\tilde{q}}<\frac{1}{p}+\frac{1}{q}$. By the above argument, the inclusion map $\tilde i$ from  $RM(p_0,\tilde q)$  into $RM(p, q)$ is compact. Since $i:RM(p_0,q_0)\to RM(p,q)$ factorizes through  $\tilde i$, we get that $i$ is compact.
\end{proof}

\section{Bergman projection}

In the theory of Banach spaces of analytic functions, a useful integral operator is the Bergman projection  
\begin{align*}
P(f)(z)=\int_{\D} K(z,w) f(w)\ dA(w),\quad z\in \D,
\end{align*}
with kernel 
\begin{align}\label{Bergmankernel}
K(z,w)=(1-z\overline{w})^{-2},\quad z,w\in\D,
\end{align} 
which is called the Bergman kernel. Such function is the reproducing kernel for the Bergman space $A^2$.

This projection is well-defined on $L^1(\D)$, mapping each function of $L^1(\D)$ to an analytic function and mapping each function of the Bergman space $A^1$ into itself.
Moreover, for $1<p<\infty$ it is known that the Bergman projection is a bounded operator from $L^p(\D)$ onto $A^p$. 
This property allows to describe the dual of Bergman spaces $A^p$.
\begin{theorem}
For $1<p<+\infty$, the dual space of $A^p$ can be identified with $A^{p'}$, where $p'$ is the conjugated index, that is, $\frac{1}{p}+\frac{1}{p'}=1$. Such functional $\phi\in (A^p)^\ast$ has a unique representation
\begin{align*}
\phi(f)=\phi_{g}(f)=\int_{\D} f(w)\overline{g(w)}\ dA(w),\quad f\in A^p,
\end{align*}
for some $g\in A^{p'}$.
\end{theorem}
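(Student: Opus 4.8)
The plan is to realise the duality through the sesquilinear pairing
$$
\phi_g(f)=\int_{\D} f(w)\overline{g(w)}\ dA(w),
$$
and to show that $g\mapsto \phi_g$ is a (conjugate-linear) isomorphism of $A^{p'}$ onto $(A^p)^\ast$. First I would dispose of the easy direction: for $g\in A^{p'}$, H\"older's inequality gives $|\phi_g(f)|\leq \|f\|_{A^p}\|g\|_{A^{p'}}$, so that $\phi_g\in (A^p)^\ast$ with $\|\phi_g\|\leq \|g\|_{A^{p'}}$. Uniqueness of the representation (injectivity of $g\mapsto\phi_g$) follows by testing against monomials: since $\int_{\D} w^n\overline{w}^k\, dA(w)=0$ for $n\neq k$ and equals $\pi/(n+1)$ for $n=k$, the vanishing $\phi_g(z^n)=0$ for all $n\geq 0$ forces every Taylor coefficient of $g$ to be zero.

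The heart of the matter is surjectivity, and here I would lean on the boundedness of the Bergman projection recalled just above. Given $\phi\in (A^p)^\ast$, since $A^p$ is a closed subspace of $L^p(\D)$, the Hahn-Banach theorem extends $\phi$ to a functional on $L^p(\D)$ of the same norm; by the Riesz representation $(L^p(\D))^\ast=L^{p'}(\D)$ there is $h\in L^{p'}(\D)$ with $\phi(f)=\int_{\D} f\,\overline{h}\ dA$ for every $f\in A^p$. The candidate representative is then $g:=P(h)$: because $1<p'<\infty$, the boundedness of $P$ from $L^{p'}(\D)$ onto $A^{p'}$ guarantees $g\in A^{p'}$ with $\|g\|_{A^{p'}}\lesssim \|h\|_{L^{p'}}$.

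It then remains to verify $\phi=\phi_g$. The key observation is that the Bergman kernel is Hermitian, $\overline{K(z,w)}=K(w,z)$, so $P$ is formally self-adjoint for the pairing $\langle u,v\rangle=\int_{\D}u\,\overline{v}\, dA$; concretely, Fubini's theorem yields $\langle Pf,h\rangle=\langle f,Ph\rangle$ for $f\in A^p$ and $h\in L^{p'}(\D)$. Since $f\in A^p$ is reproduced by the projection, $Pf=f$, and hence
$$
\phi_g(f)=\langle f,Ph\rangle=\langle Pf,h\rangle=\langle f,h\rangle=\phi(f).
$$
Finally, combining $\|\phi_g\|\leq \|g\|_{A^{p'}}\lesssim \|h\|_{L^{p'}}=\|\phi\|$ with the reverse H\"older bound shows that $g\mapsto \phi_g$ is bounded with bounded inverse, completing the identification $(A^p)^\ast\cong A^{p'}$.

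I would expect the only delicate point to be the justification of Fubini's theorem in the self-adjointness identity, since $K$ is singular near $\partial\D$; this is handled by the absolute-integrability estimates (a Schur test on $|K(z,w)|$) that already underpin the boundedness of $P$, so no genuinely new difficulty arises once that boundedness is granted. In other words, the entire argument is driven by the single analytic input $P\colon L^{p'}(\D)\to A^{p'}$, and the rest is soft functional analysis.
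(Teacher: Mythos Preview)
Your proposal is correct and follows precisely the strategy the paper uses. The paper actually states this particular theorem as a known classical fact without proof, but its proof of the more general Corollary~\ref{dualopenbox} (identifying $(RM(p,q))^\ast$ with $RM(p',q')$) proceeds exactly as you do: H\"older for boundedness and testing against $z^n$ for uniqueness, then Hahn--Banach to extend a given functional to the ambient mixed-norm $L$-space, representation by some $h$, setting $g=P(h)$, and a Fubini computation exploiting the reproducing property to conclude $\phi=\phi_g$; your argument is the specialisation $p=q$ of that proof.
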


Mimicking this schedule for Bergman spaces (but with a  deeper argument), in this section we prove the boundedness of the Bergman projection from $L^q(\mathbb{T},L^p[0,1])$ onto $RM(p,q)$, where $1<p,q<\infty$ and, as a byproduct, we describe its dual.

To study the duality of $RM(p,q)$ spaces, the following theorem will be important because it provides a characterization of the dual space of $L^q(\mathbb{T},L^p[0,1])$, for $1\leq p,q<+\infty$.

\begin{theorem}{\cite[Theorem 1, p. 304]{benedek_panzone_1962}}\label{panzone_thm}
	Let $1\leq p,q<+\infty$. $J(f)$ is a continuous functional on the normed space $L^q(\mathbb{T},L^p[0,1])$ if and only if it can be represented by
	\begin{align*}
	J(f)=\int_{\D} h(w)f(w)\ dA(w)
	\end{align*}
	where $h(w)$ is a uniquely determined function of $L^{q'}(\mathbb{T},L^{p'}[0,1])$ and $\|J\|=\rho_{p',q'}(h)$.
\end{theorem}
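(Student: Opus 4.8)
The plan is to regard $L^q(\T,L^p[0,1])$ as a Bochner space of $L^p[0,1]$-valued functions on $\T$ and to prove the two implications of the equivalence separately. Writing $\mu$ for the product measure on $\T\times[0,1]\cong\D$ underlying the norm $\rho_{p,q}$, the candidate duality is implemented by the pairing
\begin{align*}
J(f)=\langle f,h\rangle=\int_{\T\times[0,1]} f(re^{it})\,h(re^{it})\,d\mu(r,t),\qquad d\mu=\tfrac{1}{2\pi}\,dr\,dt,
\end{align*}
which is the integral $\int_{\D}hf\,dA$ of the statement under the identification $\D\cong\T\times[0,1]$. I must show two things: (i) each $h\in L^{q'}(\T,L^{p'}[0,1])$ induces a bounded $J$ with $\|J\|=\rho_{p',q'}(h)$, and (ii) every bounded $J$ arises this way from a \emph{unique} such $h$.

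For (i), boundedness and the bound $\|J\|\le\rho_{p',q'}(h)$ come from applying H\"older's inequality twice: first in the radial variable with exponents $p,p'$, giving $\int_0^1|fh|\,dr\le\|f(\cdot e^{it})\|_{L^p}\,\|h(\cdot e^{it})\|_{L^{p'}}$ for each fixed $t$, and then in the angular variable with exponents $q,q'$. For the reverse inequality $\|J\|\ge\rho_{p',q'}(h)$, hence the norm identity, I would insert a near-extremal $f$: for each fixed $t$ take $f(\cdot e^{it})$ proportional to $|h(\cdot e^{it})|^{p'-1}\,\mathrm{sgn}\,\overline{h(\cdot e^{it})}$ (the extremizer of the inner H\"older step, using $(p'-1)p=p'$) and then scale the proportionality constant in $t$ by a suitable power of $\|h(\cdot e^{it})\|_{L^{p'}}$ to saturate the outer step; a direct computation recovers $\rho_{p',q'}(h)$.

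For (ii), the heart of the matter, I would recover $h$ by a vector-measure/Radon--Nikodym argument. Given $J$, the assignment $E\mapsto\bigl(g\mapsto J(\mathbf{1}_E\otimes g)\bigr)$ defines a $\bigl(L^p[0,1]\bigr)^\ast=L^{p'}[0,1]$-valued set function on $\T$ that is countably additive (dominated convergence in $L^q$ since $q<\infty$) and absolutely continuous with respect to $dt$, with total variation controlled by $\|J\|$. When $1<p<\infty$ the target $L^{p'}[0,1]$ is reflexive, hence enjoys the Radon--Nikodym property, so this measure admits a density $t\mapsto h(\cdot e^{it})\in L^{p'}[0,1]$, i.e. a scalar $h(re^{it})$. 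Testing against simple functions $\sum_j\mathbf{1}_{E_j}\otimes g_j$, which are dense in $L^q(\T,L^p[0,1])$ precisely because $q<\infty$, one verifies $J(f)=\langle f,h\rangle$ for all $f$, upgrades the $L^1$-integrability of the density to $h\in L^{q'}(\T,L^{p'}[0,1])$ with $\rho_{p',q'}(h)\le\|J\|$ (so combining with (i) gives equality), and obtains uniqueness, since two densities representing the same functional must agree $\mu$-a.e.

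The step I expect to be the genuine obstacle is the endpoint $p=1$, where $p'=\infty$ and the Radon--Nikodym route fails because $L^\infty[0,1]$ does not have the Radon--Nikodym property. Here I would replace it by a direct lattice/truncation argument: use the order structure of the space to produce, on each radial cross-section, an essentially bounded density, assemble a measurable $h(\cdot e^{it})\in L^\infty[0,1]$ by a measurable-selection or lifting argument, and control its essential supremum by testing $J$ against normalized indicator functions. In all cases the finiteness of $q$ is essential, both to guarantee density of simple functions and to let the construction close up; the overall scheme is exactly the inductive mixed-norm duality of Benedek and Panzone.
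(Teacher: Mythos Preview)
The paper does not contain a proof of this statement: it is quoted verbatim from Benedek and Panzone \cite[Theorem~1, p.~304]{benedek_panzone_1962} and used as a black box, so there is no ``paper's own proof'' to compare your proposal against.

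That said, your outline is the standard route to this mixed-norm duality and is essentially the argument in the cited reference. Two small comments. First, your identification $d\mu=\tfrac{1}{2\pi}\,dr\,dt$ with $\int_{\D}hf\,dA$ is not literally correct: $dA$ carries the polar Jacobian $r$, whereas $\rho_{p,q}$ does not. This is harmless for the duality statement (the factor $r$ is bounded and bounded away from zero on each annulus, so the spaces coincide as sets with equivalent norms, and the pairing can absorb a weight), but it does mean the norm identity $\|J\|=\rho_{p',q'}(h)$ holds for the pairing with the \emph{product} measure, not the area measure; be careful which one you intend. Second, in the endpoint $p=1$ your sketch is vague exactly where the work lies: the map $E\mapsto J(\mathbf 1_E\otimes\cdot)$ lands in $(L^1[0,1])^\ast=L^\infty[0,1]$, a non-RNP space, so you cannot invoke a vector Radon--Nikodym theorem. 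The clean way around this is to avoid vector measures altogether and argue scalarly: view $J$ as a bounded functional on the dense set of simple functions on $\T\times[0,1]$, produce a finitely additive signed measure on rectangles dominated in the right way, and pass to a scalar density $h$ directly via the scalar Radon--Nikodym theorem on $\T\times[0,1]$; the mixed-norm bound $\rho_{\infty,q'}(h)\le\|J\|$ then follows by testing on products $\mathbf 1_E\otimes g$. Your ``lifting/measurable selection'' suggestion can be made to work but is more delicate than necessary.
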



\begin{theorem}\label{BergmanProj}
	Let $1<p,q<+\infty$. The Bergman projection $P$ is bounded from  the space $L^q(\mathbb{T},L^p[0,1])$ onto $RM(p,q)$.
\end{theorem}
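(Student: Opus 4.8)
The statement has two parts: boundedness of $P$ into $RM(p,q)$ and surjectivity onto it. The surjectivity is the soft part: by Theorem~\ref{thm:inclusion} we have $RM(p,q)\subset RM(1,1)=A^1$ (indeed $\frac1p+\frac1q<2$ and $p\geq 1$), so the reproducing property of the Bergman kernel gives $Pg=g$ for every $g\in RM(p,q)$; once boundedness is known this shows $P$ is onto. Moreover $L^q(\T,L^p[0,1])\subset L^1(\D)$ by Hölder's inequality in the radial and then the angular variable, so $Pf$ is a well-defined analytic function. Thus everything reduces to the estimate $\rho_{p,q}(Pf)\lesssim \|f\|_{L^q(\T,L^p[0,1])}$.

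The plan is to dominate $|Pf|$ pointwise by a composition of an angular maximal function and a radial Hilbert-type operator, and then to absorb the maximal function with a vector-valued inequality. First I would use the elementary estimate $|1-\rho s\, e^{i\phi}|\asymp (1-\rho s)+|\phi|$ (valid for $\rho,s\in[0,1]$, $|\phi|\leq\pi$) to write, with $z=\rho e^{i\theta}$ and $w=se^{it}$,
\begin{equation*}
|Pf(\rho e^{i\theta})|\lesssim \int_0^1\!\!\int_{-\pi}^{\pi}\frac{|f(se^{it})|}{\big((1-\rho s)+|\theta-t|\big)^2}\,dt\,s\,ds .
\end{equation*}
For fixed $s$, put $F_s(t):=|f(se^{it})|$ and $\delta:=1-\rho s$. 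The kernel $\phi\mapsto\big(\delta+|\phi|\big)^{-2}$ is even, decreasing in $|\phi|$, and has $L^1(\T)$-mass $\asymp \delta^{-1}$; hence convolution with it is controlled by the Hardy--Littlewood maximal operator $M$ on $\T$, which gives
\begin{equation*}
|Pf(\rho e^{i\theta})|\lesssim \int_0^1 \frac{s}{1-\rho s}\,(MF_s)(\theta)\,ds .
\end{equation*}

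Next I take the $L^p$-norm in $\rho\in[0,1]$ for fixed $\theta$. Bounding $s\leq 1$, the radial kernel $\frac{1}{1-\rho s}$ becomes, after the substitution $u=1-\rho$, $v=1-s$, comparable to the Hilbert kernel $\frac{1}{u+v}$ on $(0,1)$, whose associated integral operator is bounded on $L^p(0,1)$ for $1<p<\infty$. Therefore
\begin{equation*}
\Big(\int_0^1|Pf(\rho e^{i\theta})|^p\,d\rho\Big)^{1/p}\lesssim \Big(\int_0^1 (MF_s)(\theta)^p\,ds\Big)^{1/p}.
\end{equation*}
Finally, taking the $L^q$-norm in $\theta$ and invoking the Fefferman--Stein vector-valued maximal inequality in the variable $\theta$, with continuous index $s$, inner exponent $p$ and outer exponent $q$ (admissible precisely because $1<p,q<\infty$), yields
\begin{equation*}
\Big\|\big(\int_0^1 (MF_s)^p\,ds\big)^{1/p}\Big\|_{L^q(\T)}\lesssim \Big\|\big(\int_0^1 F_s^p\,ds\big)^{1/p}\Big\|_{L^q(\T)}\asymp \rho_{p,q}(f),
\end{equation*}
which closes the estimate. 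I expect the main obstacle to be this last step: one must arrange the previous reductions so that the maximal operator acts \emph{only} on the angular variable while the radial variable plays the role of the index, so that Fefferman--Stein applies with exponents $p$ and $q$. The scalar Bergman argument (Schur's test) fails here precisely because the two variables have to be handled by different tools, and the vector-valued maximal theorem is what replaces it.
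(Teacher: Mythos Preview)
Your argument is correct and reaches the same conclusion as the paper, but by a different and somewhat more direct path. The paper proceeds by duality: it first reduces the Bergman kernel through a chain of pointwise comparisons to a model kernel $H(\theta,\varphi,x,y)=|\theta-\varphi|^{-2}$ supported on $\{\max(x,y)\le|\theta-\varphi|\le1\}$, then for $f\in L^q(\T,L^p)$ and $g\in L^{q'}(\T,L^{p'})$ bounds the pairing $\int (T_Hf)\,g$ by a product of \emph{radial} maximal averages $Rf(\theta,|\theta-\varphi|)\,Rg(\varphi,|\theta-\varphi|)$, discretizes dyadically in the radial variable, and finally invokes Fefferman--Stein on the sequence $\{Mg_k\}$ in the angular variable. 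Your route bypasses both the duality and the kernel-reduction lemmas: you absorb the angular integral at once by the approximate-identity estimate $k_\delta\ast F_s\lesssim\delta^{-1}MF_s$ with $\delta=1-\rho s$, which leaves a radial Hilbert-type kernel $1/(u+v)$ whose $L^p$-boundedness is classical, and then Fefferman--Stein with continuous index closes the argument. Both proofs separate the angular and radial variables and both rest on Fefferman--Stein (the obstruction you correctly identify); yours is leaner, while the paper's dual formulation makes the symmetric roles of $(p,q)$ and $(p',q')$ more visible. One cosmetic point: at the end you write $\rho_{p,q}(f)$, but in the paper that notation is reserved for analytic $f$; what you mean is $\|f\|_{L^q(\T,L^p[0,1])}$.
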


	Since the restriction of $P$ to $RM(p,q)$ is the identity and $Pf$ is analytic for all $f\in L^{q}(\mathbb{T},L^p[0,1])$, in order to prove above theorem it is enough to show that  $P$ is bounded from $ L^q(\mathbb{T},L^{p}[0,1])$ into itself. 

Before going into the proof of this result, we introduce some necessary terminology. 
In general, given a measurable function $M:\D\times\D\mapsto \C$ we can define the integral operator
\begin{align*}
T_{M}(f)(re^{i\theta})&=\int_{\D} M(re^{i\theta},w) f(w)\ dA(w)\\
&=\int_{0}^{2\pi} \hspace{-0.5em}\int_{0}^{1} M(re^{i\theta},\rho e^{i\varphi})\ f(\rho e^{i\varphi})\ \rho \frac{d\rho d\varphi}{\pi},\quad r\in[0,1),\ \theta\in[0,2\pi],
\end{align*}
whernever such integral exists. 

	From now on,  with a little abuse of notation, $|\theta-\varphi|$ will denote the distance between $\theta$ and $\phi$ in the quotient group $\R/2\pi \Z$, that is, $\min_{k\in \Z} |\theta-\varphi+2k\pi|.$
	Notice also that in order to prove the boundedness of $P=T_{K}$,  it is sufficient to check the boundedness of $T_{\tilde K}$, where $$\tilde{K}(re^{i\theta},\rho e^{i\varphi}):=K(re^{i\theta},\rho e^{i\varphi}) \chi_{\{|\theta-\varphi|\leq 1\}},$$ because $K-\tilde{K}$ is a bounded function.

	Moreover, by showing that $T_D: L^q(\mathbb{T},L^{p}[0,1])\rightarrow L^q(\mathbb{T},L^{p}[0,1])$ is bounded, where
	\begin{align*}
	D(re^{i\theta},\rho e^{i\varphi})=
	\begin{cases}
	0, & \mbox{if} \quad |\theta-\varphi|\geq 1,\\
	\frac{1}{|\varphi-\theta|^2}, & \mbox{if} \quad 1\geq |\theta-\varphi|\geq 1-r\rho,\\
	\frac{1}{(1-r\rho)^2}, & \mbox{if} \quad |\theta-\varphi|\leq 1-r\rho\\
	\end{cases}
	\end{align*} we obtain the boundedness of $P: L^q(\mathbb{T},L^{p}[0,1])\rightarrow L^q(\mathbb{T},L^{p}[0,1])$ since $$|K(re^{i\theta},\rho e^{i\varphi})| \chi_{\{|\theta-\varphi|\leq 1\}}\leq 4 D(re^{i\theta},\rho e^{i\varphi}).$$
	
	Bearing in mind the change of variable $x=1-r$ and $y=1-\rho$, it follows that $\frac{\tilde{H}(\theta,\varphi,x,y)}{4}\leq D(\theta,\varphi,1-x,1-y)\leq \tilde{H}(\theta,\varphi,x,y)$, $x,y\in[0,1]$, with
	\begin{align*}
	\tilde{H}(\theta,\varphi,x,y)=
	\begin{cases}
	0, & \mbox{if} \quad |\theta-\varphi|\geq 1,\\
	\frac{1}{|\theta-\varphi|^2}, & \mbox{if} \quad 1\geq |\theta-\varphi|\geq \max\{x,y\},\\
	\frac{1}{(\max\{x,y\})^2}, & \mbox{if} \quad \max\{x,y\}\geq |\theta-\varphi|,\\
	\end{cases}
	\end{align*}
	because $\max\{x,y\}\leq 1-r\rho\leq 2\max\{x,y\}$. 

Finally, next lemma shows that	
 the boundedness of the operator $T_{\tilde{H}}:  L^q(\mathbb{T},L^{p}[0,1])\rightarrow  L^q(\mathbb{T},L^{p}[0,1])$ is equivalent to the boundedness of $T_{H}:  L^q(\mathbb{T},L^{p}[0,1])\rightarrow  L^q(\mathbb{T},L^{p}[0,1])$, where
	\begin{align*}
	{H}(\theta,\varphi,x,y)=
	\begin{cases}
		0, & \mbox{if}\quad |\varphi-\theta|>1 \quad \mbox{or} \quad \max\{x,y\}>|\varphi-\theta|,\\
		\frac{1}{|\varphi-\theta|^2}, & \mbox{if}\quad 1\geq |\varphi-\theta|\geq \max\{x,y\}.
	\end{cases}
	\end{align*}
	
	\begin{remark}\label{remarkdilat}
		Let $a,b\in (0,1]$. If we have the following relation $J(\theta, \varphi, x,y)=K(\theta,\varphi,ax,by)$ between the kernels $J$ and $K$, then 
		$$\|T_J\colon L^q(\mathbb{T},L^{p}[0,1])\rightarrow L^q(\mathbb{T},L^{p}[0,1])\|\leq \frac{b^{1/p}}{ba^{1/p}} \|T_K\colon L^q(\mathbb{T},L^{p}[0,1])\rightarrow L^q(\mathbb{T},L^{p}[0,1])\|.$$
	\end{remark}
	\begin{lemma}
		The operator $T_H:  L^q(\mathbb{T},L^{p}[0,1])\rightarrow  L^q(\mathbb{T},L^{p}[0,1])$ is bounded if and only if the operator $T_{\tilde{H}}:  L^q(\mathbb{T},L^{p}[0,1])\rightarrow  L^q(\mathbb{T},L^{p}[0,1])$ is bounded.
	\end{lemma}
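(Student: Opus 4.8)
The plan is to prove the two implications separately and to note that one of them is essentially free. Both $H$ and $\tilde H$ are nonnegative kernels, and a short case check shows $H\le \tilde H$ pointwise: they coincide except on the set $\{\,|\theta-\varphi|<\max\{x,y\}\,\}$, where $H=0$ while $\tilde H=(\max\{x,y\})^{-2}>0$. Since $T_H$ and $T_{\tilde H}$ are then positive operators and the norm $\rho_{p,q}$ depends only on the modulus, from $|T_Hg|\le T_H|g|\le T_{\tilde H}|g|$ I obtain $\rho_{p,q}(T_Hg)\le \|T_{\tilde H}\|\,\rho_{p,q}(g)$. Hence boundedness of $T_{\tilde H}$ gives boundedness of $T_H$ for free.

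For the converse I would write $\tilde H=H+R$ with $R:=\tilde H-H\ge0$, namely
\[
R(\theta,\varphi,x,y)=\frac{1}{(\max\{x,y\})^{2}}\,\chi_{\{|\theta-\varphi|<\max\{x,y\}\}},
\]
and reduce everything to showing that $T_R$ is bounded on $L^q(\mathbb{T},L^p[0,1])$ \emph{unconditionally}; then $T_{\tilde H}=T_H+T_R$ is bounded whenever $T_H$ is. As $R\ge0$, the bounded factor $1-y$ produced by $\rho\,d\rho$ under the change of variables only affects constants, so I may take $g\ge0$ and estimate $T_Rg$ from above. I would split $R=R_1+R_2$ according to whether $\max\{x,y\}=x$ or $=y$, which yields $T_R=T_1+T_2$ with
\[
T_1g(\theta,x)=\frac{1}{x^{2}}\int_0^{x}\!\int_{|\theta-\varphi|<x}\! g(\varphi,y)\,d\varphi\,dy,\qquad T_2g(\theta,x)=\int_x^{1}\frac{1}{y^{2}}\int_{|\theta-\varphi|<y}\! g(\varphi,y)\,d\varphi\,dy.
\]

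The decisive step is to linearise the angular averages through the Hardy--Littlewood maximal operator $M$ in the angular variable: for every width $w$ one has $\int_{|\theta-\varphi|<w}g(\varphi,y)\,d\varphi\le 2w\,G(\theta,y)$, where $G(\theta,y):=M[g(\cdot,y)](\theta)$. Taking $w=x$ turns $T_1$ into twice the classical Hardy operator in the radial variable applied to $G(\theta,\cdot)$, and taking $w=y$ turns $T_2$ into twice its dual (adjoint) Hardy operator applied to the same $G(\theta,\cdot)$. Because $p>1$, both the Hardy and the dual Hardy operator are bounded on $L^p[0,1]$; applying these one-dimensional inequalities for each fixed $\theta$ and then taking the $L^q(d\theta)$-norm, I would obtain
\[
\rho_{p,q}(T_1g)+\rho_{p,q}(T_2g)\;\lesssim\;\left(\int_0^{2\pi}\Big(\int_0^{1}|M[g(\cdot,y)](\theta)|^{p}\,dy\Big)^{q/p}\,d\theta\right)^{1/q}=\rho_{p,q}(G),
\]
with implicit constant depending only on $p,q$.

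To finish I would invoke the Fefferman--Stein vector-valued maximal inequality in its continuous, $L^p(dy)$-valued form, valid exactly because $1<p,q<\infty$, which gives $\rho_{p,q}(G)\lesssim \rho_{p,q}(g)$. Combining the last two displays shows $T_R$ bounded, and hence the asserted equivalence. I expect the main obstacle to be precisely this coupling: the radius of the angular average is tied to the radial variable, so $T_R$ does not factor as an angular average followed by a radial operator. Passing to $M$ decouples the two variables at the cost of a maximal function, and since $p$ and $q$ are independent it is the vector-valued Fefferman--Stein inequality---rather than Minkowski's inequality or a Schur test---that correctly recombines the mixed norm.
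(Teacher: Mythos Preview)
Your proof is correct but follows a genuinely different route from the paper's. Both arguments dispose of the implication $T_{\tilde H}$ bounded $\Rightarrow$ $T_H$ bounded by the pointwise inequality $H\le\tilde H$. For the converse, you write $\tilde H=H+R$ and prove that $T_R$ is bounded \emph{unconditionally}: you dominate the angular average by the Hardy--Littlewood maximal function, reduce the radial part to the Hardy and dual Hardy operators on $L^p[0,1]$ (using $1<p<\infty$), and close with the continuous-parameter Fefferman--Stein inequality. The paper, by contrast, never isolates $R$. It observes that the dilated kernels $H_n(\theta,\varphi,x,y):=2^{-2n}H(\theta,\varphi,2^{-n}x,2^{-n}y)$ satisfy $\|T_{H_n}\|\le 2^{-n}\|T_H\|$ by a simple rescaling (their Remark on dilations), and that $\tilde H\le 3\sum_{n\ge0}H_n$ pointwise; summing gives $\|T_{\tilde H}\|\le 6\|T_H\|$.

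The paper's argument is shorter and more self-contained for the lemma itself: it uses nothing beyond positivity and scaling, and yields an explicit constant. Your argument is heavier at this stage---it front-loads the Fefferman--Stein machinery that the paper only deploys later when bounding $T_H$---but it proves the stronger fact that $T_R$ is always bounded, which is of independent interest and makes the equivalence somewhat less delicate.
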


	\begin{proof}
		Clearly, the boundedness of $T_{\tilde{H}}$ implies the boundedness of $T_{H}$ because  $0\leq H\leq \tilde{H}$. Now, we proceed to show the converse implication. First of all, we define the dilated kernels  $H_n(\theta,\varphi,x,y):=2^{-2n} H(\theta,\varphi,2^{-n}x,2^{-n}y)$. Using Remark~\ref{remarkdilat} and denoting by $\Vert \cdot\Vert$ the operator norm from $L^{q}(\mathbb{T},L^p[0,1]$ into itself, we have 
		
	\begin{align}\label{ineqdltn}
\|T_{H_n}\|\leq 2^{-n}\|T_{H}\|.
	\end{align}
		Therefore, using the fact that
		$$\tilde{H}(\theta,\varphi,x,y)\leq 3 \sum_{n=0}^{\infty} H_n(\theta,\varphi,x,y)$$  
		and the previous inequality \eqref{ineqdltn}, we conclude 
		\begin{align*}
		\|T_{\tilde{H}} \|\leq 3\sum_{n=0}^{\infty} \|T_{H_n}\|\leq 3\sum_{n=0}^{\infty} 2^{-n} \|T_{H}\| \leq 6\|T_{H}\|,
		\end{align*}
		and we are done.
		\end{proof}

	\begin{lemma}\label{PRineq}
		Let $f\in L^{q}(\mathbb{T},L^p[0,1])$, $g\in L^{q'}(\mathbb{T},L^{p'}[0,1])$ such that $f,g\geq 0$. Then 
		\begin{align*}
		\int_{0}^{2\pi}\hspace{-0.5em} \int_{0}^{1} (T_{H}f)\ g\ dx\ d\theta\leq \int_{0}^{2\pi} \hspace{-0.5em}\int_{0}^{2\pi} Rf(\theta, |\varphi-\theta|) Rg(\varphi, |\varphi-\theta|)\ d\theta\ d\varphi,
		\end{align*}
		where $Rf(\theta,x)=\begin{cases}
	\sup_{1\geq t\geq x} \frac{1}{t}\int_{0}^{t} f(\theta,u)\ du, \ &\mbox{ if } x<1,\\
	0,\ &\mbox{ if } x\geq 1.
		\end{cases}$
	\end{lemma}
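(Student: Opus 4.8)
The plan is to unfold the definition of $T_{H}$, apply Tonelli's theorem, and then for each fixed pair of angles reduce the inner radial integral to a product of two averages that are dominated by the maximal quantities $Rf$ and $Rg$. First I would write, using the integral expression for $T_{H}$ (all integrands are nonnegative, so Tonelli applies and there is no integrability issue),
\begin{equation*}
\int_{0}^{2\pi}\!\int_{0}^{1}(T_{H}f)(\theta,x)\,g(\theta,x)\,dx\,d\theta=\int_{0}^{2\pi}\!\int_{0}^{2\pi}\!\int_{0}^{1}\!\int_{0}^{1}H(\theta,\varphi,x,y)\,f(\varphi,y)\,g(\theta,x)\,dy\,dx\,d\varphi\,d\theta.
\end{equation*}
The weight $\rho/\pi=(1-y)/\pi\le 1$ coming from the area measure after the substitution $y=1-\rho$ only decreases the left-hand side, so it may be discarded.

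The key step is to evaluate the inner integral in $(x,y)$ for fixed $(\theta,\varphi)$. Setting $s:=|\varphi-\theta|$, the kernel $H$ vanishes when $s>1$, and when $s\le 1$ it equals $1/s^{2}$ exactly on the square $\{\max\{x,y\}\le s\}=[0,s]\times[0,s]$ and is zero outside it. Hence the integrand factorizes and, for $s\le 1$,
\begin{equation*}
\int_{0}^{1}\!\int_{0}^{1}H(\theta,\varphi,x,y)\,f(\varphi,y)\,g(\theta,x)\,dy\,dx=\left(\frac{1}{s}\int_{0}^{s}f(\varphi,y)\,dy\right)\left(\frac{1}{s}\int_{0}^{s}g(\theta,x)\,dx\right),
\end{equation*}
while the expression is $0$ when $s>1$.

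Next I would dominate each average by the corresponding $R$-quantity: choosing $t=s$ in the supremum defining $Rf(\varphi,s)$ gives $\frac{1}{s}\int_{0}^{s}f(\varphi,y)\,dy\le Rf(\varphi,|\varphi-\theta|)$, and similarly $\frac{1}{s}\int_{0}^{s}g(\theta,x)\,dx\le Rg(\theta,|\varphi-\theta|)$; both inequalities remain valid in the degenerate case $s>1$, where $R$ is defined to be $0$. Inserting these bounds and integrating over $(\theta,\varphi)$ yields
\begin{equation*}
\int_{0}^{2\pi}\!\int_{0}^{1}(T_{H}f)\,g\,dx\,d\theta\le\int_{0}^{2\pi}\!\int_{0}^{2\pi}Rf(\varphi,|\varphi-\theta|)\,Rg(\theta,|\varphi-\theta|)\,d\theta\,d\varphi.
\end{equation*}
Because the domain $[0,2\pi]^{2}$ is symmetric and $|\varphi-\theta|=|\theta-\varphi|$, interchanging the labels $\theta$ and $\varphi$ converts this into the stated inequality.

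I do not anticipate a genuine obstacle: the heart of the argument is simply that $H$ is constant on a square, so the double integral splits into a product of averages, each controlled by a maximal average. The only points demanding a little care are keeping track of which angular variable is attached to $f$ and which to $g$ (settled by the final relabeling) and confirming that both the factorization and the choice $t=s$ are legitimate across the whole range of $s$, including the trivial case $s>1$ where both sides vanish.
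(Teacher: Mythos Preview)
Your argument is correct and follows essentially the same route as the paper's own proof: unfold $T_{H}$, use that $H$ equals $1/|\theta-\varphi|^{2}$ on the square $[0,|\theta-\varphi|]^{2}$ and vanishes elsewhere, factor the inner integral into a product of averages, and dominate each average by the corresponding $R$-quantity. Your explicit final relabeling $\theta\leftrightarrow\varphi$ even clarifies a point that the paper leaves implicit.
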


	\begin{proof}
		Using the definition of the kernel $H$ and grouping terms, it follows 
	\begin{align*}
	&\int_{0}^{2\pi}\hspace{-0.5em} \int_{0}^{1} T_{H}f(\theta,x)\ g(\theta,x)\ dx\ d\theta=\int_{0}^{2\pi} \hspace{-0.5em}\int_{0}^{2\pi} \hspace{-0.5em}\int_{0}^{1}\hspace{-0.5em}\int_{0}^{1} H(\theta,\varphi,x,y) f(\varphi,y) g(\theta,x)\ dx\ dy\ d\varphi\ d\theta\\
	&=\iiiint_{0\leq x,y\leq  |\theta-\varphi|\leq 1} \frac{1}{|\theta-\varphi|^2} f(\varphi,y) g(\theta,x)\ dx\ dy\ d\varphi\ d\theta\\
	&=\iint_{|\varphi-\theta|\leq 1} \left(\frac{1}{|\theta-\varphi|}\int_{0}^{|\theta-\varphi|} f(\varphi,y)\ dy\right)\left(\frac{1}{|\theta-\varphi|}\int_{0}^{|\theta-\varphi|} g(\varphi,x)\ dx\right)\ d\theta\ d\varphi\\
	&\leq \int_{0}^{2\pi} \hspace{-0.5em}\int_{0}^{2\pi} Rf(\theta, |\varphi-\theta|) Rg(\varphi, |\varphi-\theta|)\ d\theta\ d\varphi.
	\end{align*}
\end{proof}

	\begin{remark}\label{BoundRop}
		Let $1<p<\infty$. Notice that if $0\leq x\leq x_1\leq 1$ then $Rf(\theta, x)\geq Rf(\theta,x_1)$.
		Moreover, for $\theta \in \mathbb{T}$ fixed we define $f_{\theta}(x):=f(\theta,x)$. Therefore, since  $Rf(\theta,x)\leq M f_{\theta}(x)$, where $M$ is the Hardy-Littlewood maximal function,  there is a constant $C_{p}>0$ such that $\|Rf(\theta,\cdot)\|_{L^{p}[0,1]}\leq C_{p}\|f_{\theta}\|_{L^{p}[0,1]}$.
	\end{remark}

\begin{proof}[Proof of Theorem~\ref{BergmanProj}]
Bearing in mind the notation of the previous lemma, for $f\in {L^{q}(\mathbb{T},L^p[0,1])}$ and $g\in {L^{q'}(\mathbb{T},L^{p'}[0,1])}$ such that $\rho_{p,q}(f)\leq 1$ and $\rho_{p',q'}(g)\leq 1$ we consider the functions $F=Rf$ and $G=Rg$. Moreover, we define the following sequences of functions $f_{k}(\varphi)=F(\varphi,2^{-k})$ and $g_{k}(\varphi)=G(\varphi,2^{-k})$, $\varphi\in\mathbb{T}$ and $k\in \N$. Notice that for all $x\in I_{k}=[2^{-k},2^{-k+1})$ we have that $f_{k-1}(\varphi)\leq F(\varphi,x)\leq f_{k}(\varphi)$ and $g_{k-1}(\varphi)\leq G(\varphi,x)\leq g_{k}(\varphi)$. Indeed, it follows 
\begin{align*}
\sum_{k=1}^{\infty} f_{k-1}(\varphi) \chi_{I_k}(x)&\leq F(\varphi,x)\leq \sum_{k=1}^{\infty} f_{k}(\varphi) \chi_{I_k}(x),\\
\sum_{k=1}^{\infty} g_{k-1}(\varphi) \chi_{I_k}(x)&\leq G(\varphi,x)\leq \sum_{k=1}^{\infty} g_{k}(\varphi) \chi_{I_k}(x).
\end{align*}

Using Remark~\ref{BoundRop} and these inequalities, we obtain 
\begin{align*}
\int_{0}^{2\pi} \left(\sum_{k=1}^{\infty} f_{k-1}^{p}(\varphi)\ 2^{-k}\right)^{q/p}\ d\varphi\leq \int_{0}^{2\pi} \left(\int_{0}^{1} |F(\varphi,x)|^{p}\ dx\right)^{q/p}\ d\varphi \leq C_{p}^{q}
\end{align*}
and therefore
\begin{align}\label{bsequ}
\int_{0}^{2\pi} \left(\sum_{k=0}^{\infty}f_{k}^{p}(\varphi) 2^{-k}\right)^{q/p}\ d\varphi\leq 2^{q/p} C_{p}^{q}.
\end{align}
Following the same argument, we obtain the inequality for the sequence $\{g_k\}$.

Hence, by Lemma~\ref{PRineq} we have 
\begin{align*}
&\int_{0}^{2\pi} \hspace{-0.5em} \int_{0}^{1} (T_{H}f)\ g\ dx\ d\theta\leq \int_{0}^{2\pi} \hspace{-0.5em}\int_{0}^{2\pi} F(\theta, |\varphi-\theta|) G(\varphi, |\varphi-\theta|)\ d\theta\ d\varphi\\
&\leq \int_{0}^{2\pi} \left(\sum_{k=1}^{\infty} f_{k}(\theta) \int_{0}^{2\pi} g_{k}(\varphi) \chi_{I_k}(|\theta-\varphi|)\ d\varphi\right)d\theta\\
&\leq \int_{0}^{2\pi} \sum_{k=1}^{\infty} f_{k}(\theta)\ 2^{2-k}\left( \frac{1}{2^{2-k}}\int_{\theta-2^{-k+1}}^{\theta+2^{-k+1}} g_{k}(\varphi)\ d\varphi\right)\ d\theta\\
&\leq \int_{0}^{2\pi} \sum_{k=1}^{\infty} f_{k}(\theta)\ 2^{2-k} Mg_{k}(\theta)\ d\theta.
\end{align*}
Applying Hölder's inequality it follows 
\begin{align*}
&\int_{0}^{2\pi}\hspace{-0.5em} \int_{0}^{1} (T_{H})f(\theta,x)\ g(\theta,x)\ dx\ d\theta\leq 4 \int_{0}^{2\pi} \left(\sum_{k=1}^{\infty} f_{k}^{p}(\theta)\ 2^{-k}\right)^{1/p}\left(\sum_{k=1}^{\infty} Mg_{k}^{p'}(\theta)\ 2^{-k}\right)^{1/p'}\ d\theta\\
&\leq\left(\int_{0}^{2\pi }\left(\sum_{k=1}^{\infty} f_{k}^{p}(\theta)\ 2^{-k}\right)^{q/p}\ d\theta\right)^{1/q}\left(\int_{0}^{2\pi }\left(\sum_{k=1}^{\infty} Mg_{k}^{p'}(\theta)\ 2^{-k}\right)^{q'/p'}\ d\theta\right)^{1/q'}.
\end{align*}
Hence, by a classical result of Fefferman and Stein \cite[Theorem 1, p.107]{fefferman_stein_1971} and the inequalities \eqref{bsequ}, we get
\begin{align*}
\int_{0}^{2\pi} \hspace{-0.5em}\int_{0}^{1} (T_{H}f(\theta,x))\ g(\theta,x)\ dx\ d\theta&\leq 2^{2+1/p}\ C_{p}\ A_{p',q'} \left(\int_{0}^{2\pi }\left(\sum_{k=1}^{\infty} g_{k}^{p}(\theta)\ 2^{-k}\right)^{q'/p'}\ d\theta\right)^{1/q'}\\
&\leq 8\ C_{p}\ C_{p'}\ A_{p',q'}.
\end{align*}

Finally, we conclude the proof of the boundedness of the Bergman projection using  the last inequality with \cite[Theorem 1, p. 303]{benedek_panzone_1962}.
\end{proof}

An important consequence of this result is the following corollary about the dual of $RM(p,q)$ for $1<p,q<\infty$.

\begin{corollary}\label{dualopenbox}
	Let $1<p,q<\infty$. Then $(RM(p,q))^{\ast}\cong RM(p',q')$, where $\frac{1}{p}+\frac{1}{p'}=1$ and $\frac{1}{q}+\frac{1}{q'}=1$. 
\end{corollary}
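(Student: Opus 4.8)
The plan is to realise $(RM(p,q))^{\ast}$ through the Bergman projection, exactly as one does for the Bergman spaces $A^p$; the only genuinely new input is that, by Theorem~\ref{BergmanProj}, the projection is now available as a bounded operator on the mixed-norm space at \emph{both} pairs of exponents $(p,q)$ and $(p',q')$, the latter being legitimate precisely because $1<p',q'<\infty$. Concretely, I would show that the map $g\mapsto \phi_g$, where $\phi_g(f)=\int_{\mathbb{D}} f(w)\overline{g(w)}\,dA(w)$, is an isomorphism from $RM(p',q')$ onto $(RM(p,q))^{\ast}$, and that $\|\phi_g\|\asymp \rho_{p',q'}(g)$.

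The easy direction and injectivity come first. For $g\in RM(p',q')$, applying H\"older's inequality in the radial variable (exponents $p,p'$) and then in the angular variable (exponents $q,q'$), together with the trivial bound $r\,dr\le dr$, gives $|\phi_g(f)|\lesssim \rho_{p,q}(f)\,\rho_{p',q'}(g)$, so $\phi_g\in (RM(p,q))^{\ast}$ with $\|\phi_g\|\lesssim \rho_{p',q'}(g)$. Injectivity of $g\mapsto\phi_g$ follows by testing against the monomials $f(w)=w^{n}\in RM(p,q)$: the orthogonality relations $\int_{\mathbb{D}} w^{n}\overline{w^{k}}\,dA=0$ for $n\neq k$ force every Taylor coefficient of $g$ to vanish, whence $g=0$.

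The substance lies in surjectivity together with the reverse norm estimate. Given $\phi\in (RM(p,q))^{\ast}$, I would first extend it to $X:=L^q(\mathbb{T},L^p[0,1])$ by composing with the projection: since $P$ reproduces $RM(p,q)$ (its restriction to $RM(p,q)$ is the identity), the functional $\Phi:=\phi\circ P$ lies in $X^{\ast}$, agrees with $\phi$ on $RM(p,q)$, and satisfies $\|\Phi\|\le \|P\|\,\|\phi\|$. By the Benedek--Panzone duality (Theorem~\ref{panzone_thm}) there is a unique $h\in L^{q'}(\mathbb{T},L^{p'}[0,1])$ with $\Phi(f)=\int_{\mathbb{D}} h(w)f(w)\,dA(w)$ and $\rho_{p',q'}(h)=\|\Phi\|$. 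Hence, for $f\in RM(p,q)$, using $f=Pf$ and Fubini,
\[
\phi(f)=\int_{\mathbb{D}} h\cdot Pf\,dA=\int_{\mathbb{D}} f(w)\left(\int_{\mathbb{D}} \frac{h(z)}{(1-z\overline{w})^{2}}\,dA(z)\right)dA(w).
\]
The key algebraic point is the Hermitian symmetry $\overline{K(z,w)}=K(w,z)$ of the Bergman kernel: conjugating the inner integral identifies it with $\overline{P(\overline{h})(w)}$. Setting $g:=P(\overline{h})$ we therefore obtain $\phi=\phi_g$.

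It remains to place $g$ in the correct space with the correct norm. Conjugation preserves the mixed norm, so $\overline{h}\in L^{q'}(\mathbb{T},L^{p'}[0,1])$ with $\rho_{p',q'}(\overline{h})=\rho_{p',q'}(h)$, and applying Theorem~\ref{BergmanProj} at the conjugate exponents $(p',q')$ yields $g=P(\overline{h})\in RM(p',q')$ with $\rho_{p',q'}(g)\le \|P\|\,\rho_{p',q'}(h)\lesssim \|\phi\|$. Combined with injectivity and the easy upper bound, this shows $g\mapsto\phi_g$ is a bijection with $\|\phi_g\|\asymp\rho_{p',q'}(g)$, i.e.\ the isomorphism $(RM(p,q))^{\ast}\cong RM(p',q')$. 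I expect the one delicate point to be the bookkeeping of conjugates when passing from the \emph{bilinear} pairing of Theorem~\ref{panzone_thm} to the \emph{sesquilinear} pairing defining $\phi_g$; this is exactly where the Hermitian symmetry of $K$ is used, and it is the only non-formal step, the analytic heavy lifting having already been carried out in Theorem~\ref{BergmanProj}.
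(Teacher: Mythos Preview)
Your proof is correct and follows essentially the same route as the paper's: H\"older for the easy direction, injectivity via monomials, extension of the functional to $L^q(\mathbb{T},L^p[0,1])$, Benedek--Panzone duality (Theorem~\ref{panzone_thm}) to obtain a representing function $h$, and then Theorem~\ref{BergmanProj} at the conjugate exponents to place the projected function in $RM(p',q')$. The only difference is in the extension step: the paper invokes Hahn--Banach (getting $\|\Lambda\|=\|\phi\|$), whereas you extend by precomposing with $P$ (getting $\|\Phi\|\le\|P\|\,\|\phi\|$); both yield the required norm equivalence, and your conjugate bookkeeping (leading to $g=P(\overline{h})$ rather than the paper's $g=P h$, which arises from writing the Benedek--Panzone pairing sesquilinearly) is handled correctly.
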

\begin{proof}
	One part of the proof follows immediately. Indeed, applying the Hölder's inequality, one has that the functional defined by
	\begin{align*}
	\lambda_{g}(f)=\int_{\D} f(z)\overline{g(z)}\ dA(z), \quad f\in RM(p,q),\ g\in RM(p',q'),
	\end{align*}
	where $A$ is the Lebesgue measure on the unit disc $\D$,
	is bounded and $\|\lambda_g\|_{(RM(p,q))^{\ast}}\leq \rho_{p',q'}(g)$. Moreover it is unique, since if we assume that $\lambda_{g_1}=\lambda_{g_2}$ we have that $\lambda_{g_1}(z^n)=\lambda_{g_2}(z^n)$ for all $n\in\N$. Hence, $g_1=g_2$ because $\lambda_g(z^n)=\frac{\overline{a_n}}{n+1}$, where $a_n$ is the $n$-th Taylor coefficient of $g$.
	
	Now, let $\lambda$ be a functional in $(RM(p,q))^{\ast}$. We have to show that there exists $g\in RM(p',q')$ such that
	\begin{align*}
	\lambda(f)=\int_{\D} f(z)\overline{g(z)}\ dA(z) \quad \mbox{for every} \quad f\in RM(p,q).
	\end{align*}
	
	 Using the Hahn-Banach theorem, this functional can be extended to a certain  $\Lambda\in \left(L^q(\mathbb{T},L^p[0,1])\right)^{\ast}$ such that $\|\lambda\|_{(RM(p,q))^{\ast}}=\|\Lambda\|_{(L^q(\mathbb{T},L^p[0,1]))^{\ast}}$.
	Now, by means of \cite[Theorem 1, p. 304]{benedek_panzone_1962} there is a function $h\in L^{q'}(\mathbb{T},L^{p'}[0,1])$ such that
	\begin{align*}
	\Lambda(f)=\int_{\D} f(z)  \overline{h(z)}\ dA(z) \quad \mbox{for every} \quad f\in L^q(\mathbb{T},L^p[0,1])
	\end{align*}
	and $\|\Lambda\|_{(L^{q}(\mathbb{T},L^{p}[0,1]))^{\ast}}=\|h\|_{L^{q'}(\mathbb{T},L^{p'}[0,1])}$.
	
	 Let $g=T_{K}h$, where $T_K$ is the Bergman projection, and notice that, using Theorem~\ref{BergmanProj}, $g\in RM(p',q')$. So, by Fubini's theorem we have, for $f\in RM(p,q)$, 
	\begin{align*}
	\lambda(f)&=\Lambda(f)=\int_{\D} f(z)\overline{h(z)}\ dA(z)=\int_{\D}\int_{\D} \frac{f(w)}{(1-z\overline{w})^2}\ dA(w) \overline{h(z)}\ dA(z)\\
	&=\int_{\D} f(w)\int_{\D} \frac{\overline{h(z)}}{(1-z\overline{w})^2}\ dA(z) \ dm(w)=\int_{\D} f(w)\overline{T_{K}h(w)} \ dA(w)\\
	&=\int_{\D} f(w)\overline{g(w)} \ dA(w).
	\end{align*}
	 Also, we obtain that $\rho_{p,q}(g)\leq C \|h\|_{L^{q'}(\mathbb{T},L^{p'}[0,1])}=C \|\lambda\|_{(RM(p,q))^{\ast}}$ by Theorem~\ref{BergmanProj}.
\end{proof}

For the cases not covered by Theorem~\ref{BergmanProj}, its statement does not hold. In fact, we have 

\begin{theorem}\label{Pnobound}
	Let $1\leq p,q\leq +\infty$. If $\max\{p,q\}=+\infty$ or $\min\{p,q\}=1$, then the Bergman projection  $P$ does not send $L^q(\T,L^p[0,1]) $ into $ RM(p,q)$.
\end{theorem}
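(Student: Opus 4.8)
The plan is to first reduce the statement---which asserts that $P$ does not even \emph{land} in $RM(p,q)$---to the mere unboundedness of $P$, and then to produce counterexamples. For the reduction I would use the closed graph theorem: since $|w|\le 1$ and $[0,1]$ has unit length, H\"older's inequality gives a continuous inclusion $L^q(\T,L^p[0,1])\hookrightarrow L^1(\D)$, so $P$ is a well-defined map from $L^q(\T,L^p[0,1])$ into $\mathcal H(\D)$; for fixed $z$ the evaluation $f\mapsto Pf(z)$ is continuous on $L^1(\D)$ because $K(z,\cdot)$ is bounded on $\D$, while $\delta_z$ is continuous on $RM(p,q)$ by Proposition~\ref{estimevfunc}. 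Hence, if $P$ sent $L^q(\T,L^p[0,1])$ into $RM(p,q)$, its graph would be closed and $P$ would be bounded. Thus it suffices to witness unboundedness in each endpoint case. Since $\overline{K(w,z)}=K(z,w)$, the operator $P$ is self-adjoint for $\langle u,v\rangle=\int_\D u\overline v\,dA$, and using the duality $\bigl(L^1(\T,L^{p'}[0,1])\bigr)^{\ast}=L^\infty(\T,L^p[0,1])$ (valid since $L^{p'}$ is reflexive) together with the weak$^\ast$-continuity of $P$, a routine adjoint argument reduces the cases $\max\{p,q\}=+\infty$ to the cases $\min\{p,q\}=1$. It is then enough to treat $q=1$ (all $p$) and $p=1$ (all $q$).

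To locate the failure I would return to the reduction used for Theorem~\ref{BergmanProj}: up to the bounded kernel $K-\tilde K$ and harmless dilations, $P$ is comparable to $T_H$, whose size is governed---through the operator $R$ and Lemma~\ref{PRineq}---by Hardy--Littlewood maximal averages in the angular variable $\theta$ and, via $R$, in the radial variable. The boundedness of $T_H$ rested on the Fefferman--Stein vector-valued maximal inequality, which holds only for $1<p,q<+\infty$; at the endpoints the underlying maximal operator fails to be bounded on $L^1$. This is exactly where the counterexamples should come from. Concretely, for nonnegative $f,g$ supported near the diagonal $\{|\theta-\varphi|\le 1-r\rho\}$ the kernel $(1-z\overline w)^{-2}$ keeps argument bounded away from $\pm\pi/2$, so $\operatorname{Re}K\gtrsim D\gtrsim$ (the averaging kernel); this yields a lower bound $\langle Pf,g\rangle\gtrsim \iint Rf\,Rg$ mirroring Lemma~\ref{PRineq} from below, reducing the unboundedness of $P$ to that of the maximal operator on $L^1$.

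It is instructive to first test the model family $f_{s,c}(w)=(1-|w|^2)^s(1-w)^{-c}$ with $s\ge 0$: expanding in Taylor coefficients, the radial factor forces the angular integral onto the diagonal and gives $Pf_{s,c}(z)=\Gamma(s+1)\sum_n (n+1)\tfrac{(c)_n}{\Gamma(n+s+2)}z^n\asymp (1-z)^{-(c-s)}$ near $z=1$. By Example~\ref{ex1} this lies outside $RM(p,q)$ exactly when $c-s\ge\frac1p+\frac1q$, whereas a direct estimate of $\rho_{p,q}(f_{s,c})$ shows $f_{s,c}\in L^q(\T,L^p[0,1])$ exactly when $c-s<\frac1p+\frac1q$; the two regimes meet only on the borderline $c-s=\frac1p+\frac1q$, where a logarithmic factor intervenes. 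This both confirms that no counterexample can exist for $1<p,q<+\infty$ and shows that the whole difficulty sits at the borderline. To cross it at the endpoints I would abandon product-type functions: for $q=1$ I would take $f_n$ concentrated in the angle (an approximate Dirac in $\theta$ tensored with a fixed radial profile) and read off $\rho_{p,1}(Pf_n)/\rho_{p,1}(f_n)\to+\infty$ from the $L^1$-unboundedness of the angular maximal operator; for $p=1$ I would take $f_n$ concentrated in the radius so that $Rf_n$ blows up in $L^1[0,1]$. The cases $\max\{p,q\}=+\infty$ then follow from the duality of the first paragraph.

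The hard part is precisely this borderline: the natural single-singularity test functions sit just inside or just outside the relevant space, separated only by a logarithm, so no soft argument can decide the endpoint. Overcoming this requires genuinely exploiting the endpoint structure---the failure of the $L^1$ Hardy--Littlewood maximal inequality, together with the sign control of the Bergman kernel near the diagonal---and then transporting the $\max\{p,q\}=+\infty$ cases through the predual duality. Establishing the matching lower bound for $\langle Pf,g\rangle$, that is, controlling the oscillation of $(1-z\overline w)^{-2}$ away from the diagonal, is the technical heart of the argument.
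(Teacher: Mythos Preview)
Your proposal is a strategy, not a proof: the heart of the argument---the direct treatment of the $\min\{p,q\}=1$ cases via concentrated test sequences and a matching lower bound $\langle Pf,g\rangle\gtrsim \iint Rf\,Rg$---is only sketched, and you yourself flag the sign control of $(1-z\bar w)^{-2}$ as the unresolved ``technical heart''. That obstacle is real: on the diagonal block $\{|\theta-\varphi|\le 1-r\rho\}$ one does have $\mathrm{Re}\,K\gtrsim D$, but the kernel oscillates elsewhere, and you give no mechanism to prevent the off-diagonal contribution from cancelling the diagonal one. Your (correct) analysis of the family $f_{s,c}$ shows precisely that the natural single-singularity tests sit exactly on the logarithmic borderline, so a soft construction will not work; something sharper is required, and you have not supplied it. The duality reduction you propose (from $\max=\infty$ to $\min=1$ via the predual and weak$^*$-continuity) is plausible but also not carried out, and in any case it only shifts the burden to the cases you have not proved.

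The paper's route is both opposite in direction and far more concrete. It attacks the $\max\{p,q\}=\infty$ cases \emph{directly}, exhibiting in each a single explicit $f$ with $Pf\notin RM(p,q)$: for $p=\infty$ it invokes the classical fact that $P$ maps $L^\infty(\D)$ onto the Bloch space $\mathcal{B}$, which is not contained in any $H^q$; for $q=\infty$ it sets $f(\rho e^{i\varphi})=\varphi^{2-1/p}\,K(1-\varphi,\rho e^{-i\varphi})$ on a fixed Stolz-type region $\Omega=\{0<\varphi<1/2,\ \rho<1-2\varphi\}$, checks $f\in L^\infty(\T,L^p[0,1])$, and shows $|Pf(a)|\gtrsim(1-a)^{-1/p}$ for real $a\in(3/4,1)$. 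The only sign control needed is an elementary lemma bounding $|\mathrm{Arg}((1-z)/(1-w))|<\pi/4$ for $z,w\in\Omega$, which forces $\mathrm{Re}\bigl((1-z)/(1-w)\bigr)^2\ge\frac{3}{5}|(1-z)/(1-w)|^2$; this is vastly lighter than the global two-sided comparison of $P$ with $T_H$ you envisage, because everything lives in a cone where $1-z$ has small argument. The $\min\{p,q\}=1$ cases then fall out by self-adjointness of $P$ and your closed-graph reduction. In short, the paper avoids your cancellation problem entirely by choosing the easier endpoint to attack head-on.
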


Before starting with the proof of the theorem, we state the following elementary lemma.

	\begin{lemma}	\label{lemmastolzreg}
	If $z,w\in\Omega:=\{re^{i\theta}:\, 0<\theta<1/2,\, 0<r<1-2\theta\}$, then 
	\begin{enumerate}
		\item $|1-z|\asymp 1-|z|$,
		\item $\left|\mathrm{Arg}\left(\frac{1-z}{1-w}\right)\right|\leq \arctan\left(\frac{1}{2}\right)<\frac{\pi}{4}$,
		\item $\mathrm{Re}\left(\frac{1-z}{1-w}\right)^2\geq \frac{3}{5} \left|\frac{1-z}{1-w}\right|^2$.
	\end{enumerate}
\end{lemma}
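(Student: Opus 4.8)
The plan is to treat the three parts in order, since part (2) feeds directly into part (3), and all three are elementary consequences of the single defining inequality $r<1-2\theta$ of $\Omega$. Throughout I write $z=re^{i\theta}$ with $0<\theta<1/2$ and $0<r<1-2\theta$, so that $2\theta<1-r$ and in particular $\theta<(1-r)/2$.

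For part (1), I would expand $|1-z|^2=(1-r)^2+2r(1-\cos\theta)$ and use the elementary bound $1-\cos\theta\le \theta^2/2$ together with $\theta<(1-r)/2$ to estimate $2r(1-\cos\theta)\le \theta^2<(1-r)^2/4$. This yields $|1-z|^2\le \tfrac54(1-r)^2$, and combined with the trivial lower bound $|1-z|\ge 1-|z|=1-r$ it gives $1-|z|\le |1-z|\le \tfrac{\sqrt5}{2}(1-|z|)$, that is, $|1-z|\asymp 1-|z|$.

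For part (2), the first step is to locate $\mathrm{Arg}(1-z)$ for a single $z\in\Omega$. Writing $1-z=(1-r\cos\theta)-ir\sin\theta$, the real part is positive and the imaginary part negative, so $\mathrm{Arg}(1-z)=-\arctan\bigl(\tfrac{r\sin\theta}{1-r\cos\theta}\bigr)\in(-\pi/2,0)$. I would then bound the numerator by $r\sin\theta<\sin\theta<\theta$ and the denominator from below by $1-r\cos\theta\ge 1-r>2\theta$, so that the ratio is strictly less than $1/2$ and hence $\mathrm{Arg}(1-z)\in(-\arctan(1/2),0)$. Since the principal arguments of $1-z$ and of $1-w$ both lie in this interval of length $\arctan(1/2)<\pi/4$, their difference has absolute value at most $\arctan(1/2)$, and this difference equals $\mathrm{Arg}\bigl(\tfrac{1-z}{1-w}\bigr)$ with no $2\pi$ ambiguity because it stays inside $(-\pi/4,\pi/4)$.

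Part (3) is then immediate from (2): setting $\zeta=\tfrac{1-z}{1-w}$ and $\psi=\mathrm{Arg}\,\zeta$, we have $\mathrm{Re}(\zeta^2)=|\zeta|^2\cos(2\psi)$, and since $2|\psi|\le 2\arctan(1/2)<\pi/2$ while $\cos$ is decreasing on $[0,\pi/2]$, the double-angle identity $\cos(2\arctan(1/2))=\tfrac{1-(1/2)^2}{1+(1/2)^2}=\tfrac35$ gives $\cos(2\psi)\ge \tfrac35$, whence $\mathrm{Re}(\zeta^2)\ge \tfrac35|\zeta|^2=\tfrac35|\zeta^2|$. None of these steps presents a genuine obstacle; the only point requiring a little care is the verification in part (2) that the argument of the quotient is truly the difference of the two individual arguments (i.e. that it remains on the principal branch), which is guaranteed precisely because each of $\mathrm{Arg}(1-z)$ and $\mathrm{Arg}(1-w)$ lies in the small interval $(-\arctan(1/2),0)$.
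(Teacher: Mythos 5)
Your proof is correct, and its overall skeleton coincides with the paper's: part (1) by expanding $|1-z|^2$ and using $\theta<(1-r)/2$, part (2) by bounding the tangent of $\mathrm{Arg}(1-z)$ by $\tfrac12$, and part (3) by the double-angle bound applied to (2). The one genuine difference is how the crucial inequality $\frac{r\sin\theta}{1-r\cos\theta}\leq\frac12$ is verified. The paper first uses monotonicity in $r$ to pass to the extreme case $r=1-2\theta$ and then proves $\frac{(1-2\theta)\sin\theta}{1-(1-2\theta)\cos\theta}\leq\frac12$ by a calculus argument, introducing $f(\theta)=\frac12\left(1-(1-2\theta)\cos\theta\right)-(1-2\theta)\sin\theta$ and checking $f(0)=0$ and $f'(\theta)=2\theta\cos\theta+\frac12(5-2\theta)\sin\theta\geq 0$ on $\left(0,\frac12\right)$. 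You instead bound numerator and denominator separately, $r\sin\theta<\theta$ and $1-r\cos\theta\geq 1-r>2\theta$, obtaining the ratio $<\frac12$ in one line with no derivative computation; since both routes produce the same constant $\frac12$ (which is sharp, being attained in the limit $\theta\to 0$ along $r=1-2\theta$), your simplification loses nothing. You also make explicit two points the paper leaves implicit: that no $2\pi$-ambiguity arises when writing $\mathrm{Arg}\left(\frac{1-z}{1-w}\right)$ as the difference of the two principal arguments (both lie in the short interval $(-\arctan(1/2),0)$), and the computation $\cos\left(2\arctan(1/2)\right)=\frac{1-1/4}{1+1/4}=\frac35$ that justifies the paper's remark that (3) ``follows immediately from (2).''
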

\begin{proof}
	The first identity follows immediately using the triangular inequality and the definition of the set $\Omega$:
	\begin{align*}
	1-r\leq |1-re^{i\theta}|\leq \sqrt{(1-r)^2+\theta^2}\leq \sqrt{\frac{5}{4}}(1-r).
	\end{align*} 
	To prove the second one it is enough to show that 
	$\tan(\textrm{Arg}(1-\overline{z}))\leq \frac{1}{2}$ for $z\in\Omega$, because we have that $\textrm{Arg}(1-\overline{z})\in(0,\arctan(1/2))$ and $\textrm{Arg}(1-{z})\in(-\arctan(1/2),0)$.
	Clearly, one can see, for $re^{i\theta}\in \Omega$, that
	\begin{align*}
	\tan(\textrm{Arg}(1-\overline{z}))=\frac{r\sin(\theta)}{1-r\cos(\theta)}\leq \frac{(1-2\theta)\sin(\theta)}{1-(1-2\theta)\cos(\theta)}.
	\end{align*}
	
	To finish the proof of (2), we have to show that $\frac{(1-2\theta)\sin(\theta)}{1-(1-2\theta)\cos(\theta)}\leq \frac{1}{2}$. But this is clear because the function $f(\theta)=\frac{1}{2}(1-(1-2\theta)\cos(\theta))-(1-2\theta)\sin(\theta)$ for $\theta\in \left(0,\frac{1}{2}\right)$ satisfies that $f(0)=0$ and  $f'(\theta)=2\theta\cos(\theta)+ \frac{1}{2}(5-2\theta)\sin(\theta)\geq 0$ for $\theta\in \left(0,\frac{1}{2}\right)$. 
	
	The last inequality follows immediately from (2).
\end{proof}

	\begin{proof}[Proof of Theorem~\ref{Pnobound}]
	\textbf{The case $p=+\infty$}.  Let us recall that the Bergman projection $P$ is a bounded operator from $L^\infty(\D)$ onto the Bloch space $\mathcal{B}$ (see \cite[p. 47, Theorem 7]{duren_schuster} or \cite[p. 102, Theorem 5.2]{zhu_2007}). Moreover, using lacunary sequences, it is possible to find functions in  $\mathcal{B}$ whose Taylor coefficients do not go to zero (see \cite[Lemma 2.1]{ACP}). Therefore,  $\mathcal{B}\nsubseteq H^q$, $1\leq q\leq +\infty$. Thus, Bergman projection $P$ is not bounded from $L^q(\T,L^\infty[0,1])$ to $RM(\infty,q)=H^{q}$.\newline
	
	\noindent \textbf{The case $q=+\infty$}.  We show that there exists a function $f\in L^{\infty}(\T,L^p[0,1])$ such that 
	\begin{align*}
	|P(f)(a)|\gtrsim (1-a)^{-1/p}, \text{ for every } a\in \left(\frac{3}{4}, 1\right),
	\end{align*}
	so that $P(f)\notin RM(p,\infty)$. To prove this, take the set $$\Omega=\left\{re^{i\theta}\ :\ 0<\theta<1/2, 0<r<1-2\theta \right\}.$$ Given $\alpha\in \R$, consider the function
	\begin{align*}
	f(re^{i\theta}):=\begin{cases}
	0, & re^{i\theta}\notin \Omega,\\
	\theta^{\alpha} K(1-\theta,re^{-i\theta}), & re^{i\theta}\in \Omega,
	\end{cases}
	\end{align*}
	where, as usual, $K$ is the Bergman kernel. Taking $\alpha=2-\frac{1}{p}=1+\frac{1}{p'}$, we have $f\in L^{\infty}(\T,L^p[0,1])$. Indeed, for $0<\theta<1/2$,
	\begin{align*}
&\int_{0}^{1} |f(re^{i\theta})|^{p}\ dr=\theta^{p\alpha} \int_{0}^{1-2\theta} |K(1-\theta,re^{-i\theta})|^{p}\ dr\leq \theta^{p\alpha} \int_{0}^{1} \frac{dr}{(1-(1-\theta)r)^{2p}}\\
&\qquad =\frac{\theta^{p\alpha} }{2p-1}\frac{\theta^{1-2p}-1}{1-\theta}\leq \frac{2}{2p-1}\theta^{p\alpha +1-2p}=\frac{2}{2p-1}<+\infty.
	\end{align*}
	
	Now let us see that this function $f$ satisfies that $|P(f)(a)|\gtrsim (1-a)^{-1/p}$ for every $a\in \left(\frac{3}{4},1\right)$. We have that the Bergman projection of the function $f$, for $a\in\left(0,1\right)$, is
	\begin{align*}
	P(f)(a)&=\int_{0}^{1/2} \theta^{\alpha} \left(\int_{0}^{1-2\theta} \frac{dr}{(1-are^{i\theta})^2(1-(1-\theta)re^{-i\theta})^2}\right)\ d\theta\\
	&=\int_{0}^{1/2} \theta^{\alpha} \left(\int_{0}^{1-2\theta} \left(\frac{1-(1-\theta)re^{i\theta}}{1-are^{i\theta}}\right)^2\frac{dr}{|1-(1-\theta)r e^{-i\theta}|^4}\right)\ d\theta.
	\end{align*}
	By Lemma~\ref{lemmastolzreg} (applying first $(3)$ and then $(1)$), we obtain
	\begin{align*}
	|P(f)(a)|&\geq  \mathrm{Re} \,[P(f)(a)]\geq \frac{3}{5} \int_{0}^{1/2} \theta^{\alpha} \left(\int_{0}^{1-2\theta} \left|\frac{1-(1-\theta)re^{i\theta}}{1-are^{i\theta}}
	\right|^2\frac{dr}{|1-(1-\theta)r e^{-i\theta}|^4}\right)\ d\theta\\
	&\asymp \int_{0}^{1/2} \theta^{\alpha} \left(\int_{0}^{1-2\theta} 
\frac{dr}{(1-ar)^2(1-(1-\theta)r)^2}\right)\ d\theta \\
&\geq \int_{0}^{1-a} \theta^{\alpha} \left(\int_{0}^{1-2\theta} 
\frac{dr}{(1-ar)^4}\right) d\theta =\frac{1}{3a} \int_{0}^{1-a} \theta^{\alpha} \left(
(1-a(1-2\theta))^{-3}-1\right) d\theta.
	\end{align*}
Using that $\theta <1-a$ and $3/4\leq a<1$ we deduce  $(1-a(1-2\theta))^{-3}-1\geq (1-a(1-2\theta))^{-3}/2$ and  $1-a(1-2\theta)<3(1-a)$. Hence
	\begin{align*}
\frac{1}{3a} \int_{0}^{1-a} \theta^{\alpha} \left(
(1-a(1-2\theta))^{-3}-1\right) d\theta  & \geq \frac{1}{6a} \int_{0}^{1-a} \theta^{\alpha} 
(1-a(1-2\theta))^{-3} d\theta\\
&\geq \frac{1}{6a}\frac{1}{27}\frac{1}{(1-a)^{3}}\int_{0}^{1-a}\theta^{\alpha}\, d\theta \\
&\asymp (1-a)^{\alpha-2}=(1-a)^{-1/p}.
\end{align*}
Thus, for $a>3/4$, 	we have $|P(f)(a)| \gtrsim   (1-a)^{-1/p}$ and  the function $P(f)$ does not belong to $RM(p,\infty)$. \newline
	
	
\noindent	\textbf{The remaining cases.} For the remaining cases, we use the fact that if the Bergman projection $P: L^{q'}(\T,L^{p'}[0,1])\to RM(p',q')$ is bounded then $P: L^{q}(\T,L^{p}[0,1])\to RM(p,q)$ is bounded since, for $f\in L^{q}(\T,L^{p}[0,1])$,
	\begin{align*}
	\rho_{p,q}{(P(f))}&\asymp \rho_{p,q}(r P(f)))=\sup_{g\in B_{ L^{q'}(\T,L^{p'}[0,1])}}\left|\int_{\D} P(f)(w)\ \overline{g(w)}\ dA(w)\right|\\
	&=\sup_{g\in B_{ L^{q'}(\T,L^{p'}[0,1])}}\left|\int_{\D} f(w) \ \overline{P(g)(w)}\ dA(w)\right| \\
	&\leq \rho_{p,q}(f)\sup_{g\in B_{ L^{q'}(\T,L^{p'}[0,1])}} \rho_{p',q'}(P(g))\leq C\rho_{p,q}(f) .
	\end{align*}
	where $C$ is the norm of the operator $P: L^{q'}(\T,L^{p'}[0,1])\to RM(p',q')$ and, as usual,  $B_{L^{q'}(\T,L^{p'}[0,1])}$ denotes the unit ball of $L^{q'}(\T,L^{p'}[0,1])$.
\end{proof}


\begin{thebibliography}{99}
\bibitem{Aguilar-Contreras-Piazza} T. Aguilar-Hern\'andez, M.D. Contreras, and L. Rodr\'\i guez-Piazza,  {\sl Integration operators in average radial integrability spaces of analytic functions.} Preprint.

\bibitem{ACP}           J.~M.~Anderson, J.~Clunie and Ch.~Pommerenke,
                        {\sl On Bloch functions and normal functions,}
                        J. Reine Angew. Math. 270 (1974), 12--37.

\bibitem{Bracci_Contreras_Diaz-Madrigal} F. Bracci, M.D. Contreras, and S. Díaz-Madrigal, {\sl Continuous Semigroups of Holomorphic Self-maps of the Unit Disc}, Springer Monographs in Mathematics, 2020.

\bibitem{benedek_panzone_1962} A. Benedek and R. Panzone, {\sl The spaces $L^p$, with mixed norm}, Duke Math. J. \textbf{28}, (1961), 301--324



\bibitem{bergman_1970} S. Bergman, {\sl  The kernel function and conformal mapping}, American Mathematical Society, Providence, R.I., 1970.

\bibitem{Bourgain} J. Bourgain,  {\sl On the radial variation of bounded analytic functions on the disc,} Duke Math. J.  {\textbf 69}, 3 (1993), 671--682.

\bibitem{duren_theory_2000} P. L. Duren,  {\sl Theory of $H^{p}$ spaces}, Dover, New York, 2000.

\bibitem{duren_schuster} P. L. Duren and A. Schuster, {\sl	Bergman spaces}, Americal Mathematical Society, 2004.

\bibitem{fefferman_stein_1971} C. Fefferman and E.M. Stein, {\sl Some maximal inequalities}, American Journal of Mathematics, {\textbf 93(1)} (1971), 107--115


\bibitem{garnett_bounded_2007} J. Garnett,  {\sl Bounded Analytic Functions}, Graduate Texts in Mathematics, Springer-Verlag, 2007. 

\bibitem{grafakos_classical_fourier_analysis} L. Grafakos, {\sl Classical Fourier Analysis}, Springer-Verlag, 1976.




\bibitem{gurariui_lacunary_1966} V. Gurari\u{\i} and V.I.  Macaev, {\sl Lacunary power sequences in spaces ${C}$ and ${L}_{\textrm{p}}$,} Izvestiya Akademii Nauk SSSR. Seriya Matematicheskaya {\textbf 30} (1966), 3--14.

\bibitem{Hardy_1915} G. H. Hardy, {\sl The mean value of the modulus of an analytic function},
Proceedings of the London Mathematical Society \textbf{14} (1915), 269--277.

\bibitem{HKZ} H. Hedenmalm, B. Korenblum, and K. Zhu, {\sl Theory of Bergman Spaces}, Springer-Verlag, 2000. 

\bibitem{vukotic_multiplier_2016} M. Jevtić, D. Vukotić, and M. Arsenović, {\sl Taylor Coefficients and Coefficient Multipliers of Hardy and Bergman-Type Spaces}, Springer-Verlag, 2016. 

\bibitem{riesz_1923} F. Riesz, {\sl Über die Randwerte einer analytischen Funktion}, Mathematische Zeitschrift \textbf{18} (1923), 87--95.

\bibitem{rudin_1955} W. Rudin, {\sl The radial variation of analytic functions},  Duke Math. J. \textbf{22} (1955), 235--242.
\bibitem{rudin_real_1987} W. Rudin, {\sl Real and complex analysis}, McGraw-Hill International Ed. 3rd ed., New York, 1987. 

\bibitem{zhu_2007} K. Zhu, {\sl Operator theory in function spaces}, American Mathematical Society, 2007.


\bibitem{zygmund_1944} A. Zygmund, {\sl On certain integrals},  Trans. Amer. Math. Soc. \textbf{55} (1944), 170--204

\bibitem{zygmund_1959} A. Zygmund, {\sl Trigonometric Series}, Cambridge University Press, London, 1987. 




\end{thebibliography}

\end{document}